\newtheorem{theorem}[subsection]{Theorem}
\declaretheorem[name=Theorem,numberwithin=section]{thm}
\declaretheorem[numberwithin=section]{lemma}
\declaretheorem[numberwithin=section]{corollary}
\declaretheorem[numberwithin=section,name=Definition]{defn}
\declaretheorem[name=Proposition,numberwithin=section]{prop}
\newcommand{\biggij}{\mbox{\normalfont\Large\bfseries \(g_{ij}\)}}
\newcommand{\rvline}{\hspace*{-\arraycolsep}\vline\hspace*{-\arraycolsep}}
\title{Symmetric Rearrangement and Geometric Inequalities on Riemannian Manifolds}
\begin{document}
\author{Richard Stone}
\date{}
\renewcommand*\contentsname{Summary}

\maketitle

\begin{abstract}
    This paper starts by introducing results from geometric measure theory to prove symmetric decreasing rearrangement inequalities on \(\mathbb{R}^n\), which give multiple proofs of the isoperimetric and P\'{o}lya-Szeg\H{o} inequalities. Then we consider smooth oriented Riemannian manifolds of the form \(M^n = (0,\infty)\times \Sigma^{n-1}\), and test what results carry over from the \(\mathbb{R}^n\) setting or what assumptions about \(M^n\) need to be added. Of particular interest was proving the smooth co-area formula in the Riemannian manifolds setting and re-formulating particular geometric inequalities.
\end{abstract}

\hypersetup{linkcolor=black}
\tableofcontents

\section{Introduction}\label{sec:intro}

Let \(A\subset \mathbb{R}^n\) be a measurable set of finite volume. \cite{short_course_RI}

\begin{defn}[Symmetric Rearrangement of a Set]
    The \textit{symmetric rearrangement} of \(A\) is the open centred ball, \(A^*\), whose volume agrees with \(A\), 
    \begin{equation}
        A^* = \{x\in \mathbb{R}^n : \omega _n|x|^n<\text{Vol}(A)\},
    \end{equation}
    where \(\omega _n = \frac{\pi^{n/2}}{\Gamma(\frac{n}{2}+1)}\) is the Lebesgue measure of the unit ball in \(\mathbb{R}^n\). We take \(\emptyset^* = \emptyset\) and \((\mathbb{R}^n)^* = \mathbb{R}^n\).
\end{defn}

Let \(f : \mathbb{R}^n \rightarrow \mathbb{R}\) be a non-negative measurable function which vanishes at infinity; that is all the super-level sets have finite measure. 

\begin{defn}[Symmetric Decreasing Rearrangement of a Function]
    The \textit{symmetric decreasing rearrangement} of \(f\) is the function \(f^* : \mathbb{R}^n \rightarrow \mathbb{R}\), defined as,
    \begin{equation}
        f^*(x) = \int_0^{\infty} \mathcal{X} _{\{y:f(y)>t\}^*}(x) dt.
    \end{equation}
\end{defn}
This is called the \textit{layer-cake decomposition} of \(f^*\) and its existence follows as a corollary of \textit{Cavalieri's Principle} in the section 3.1. Symmetrically rearranging a function is done by taking a level set of \(f\) and shifting this so it radially symmetric about the target axis and repeating for all the level sets. 


\(f^*\) is well-defined, since \(f^*\) is lower semicontinuous, as the level sets \(\{x:f^*(x)>t\}\) are all open for \(t>0\). Then we have that \(f^*\) is measurable, \cite{analysis}.

This symmetry idea generalises nicely onto smooth manifolds of the form \(M^n = (0,\infty)\times \Sigma^{n-1}\), where \(\Sigma ^{n-1}\) is an \((n-1)\)-dimensional smooth oriented manifold. Giving \((0,\infty )\) the natural orientation, \(M\) has the induced orientation.

Let \(\omega \in \Omega ^n (M)\) be an \(n\)-form on \(M\) such that for all \(p\in M\), 
\begin{equation}
    \omega (X^1, \dots , X^n) > 0 
\end{equation}
for all positively oriented bases \(X^1, \dots ,X^n\) of \(T_pM\). It may be useful to write,
\begin{equation}
    \omega (X^1, \dots ,X^n) = \varphi (x^1, \dots ,x^n) dx^1 \wedge \dots \wedge dx^n,
\end{equation}
where \(\varphi :M^n \rightarrow \mathbb{R}, \varphi >0\) is a smooth function on the manifold and \((x^i)\) are the corresponding coordinates for the \((X^i)\) basis.

\begin{defn}
     The \textit{symmetric rearrangement} of an open set \(A\subset M\) is the unique set \(A^* \subset M\) of the form \(A^* = (0,r_*)\times \Sigma\), such that
\end{defn}

\begin{equation}
    \int_ {A^*} \omega = \int_ A \omega < \infty,
\end{equation}
where we assume that,
\begin{equation}
    \lim_{r\rightarrow 0} \int_ {(0,r)\times \Sigma} \omega  = 0, \hspace{3mm} \forall r\in (0,\infty).
\end{equation}

\begin{defn}[Symmetric Decreasing Rearrangement of a Function on a Manifold]
    The symmetric decreasing rearrangement of a function \(f:M\rightarrow \mathbb{R}_+\) is the function \(f^*:M\rightarrow \mathbb{R}_+\) defined by,
\end{defn}

\begin{equation}
    f^*(x) = \int_0^{\infty} \mathcal{X}_{\{y:f(y)>t\}^*}(x) \,dt,
\end{equation}
where the set \(\{y:f(y)>t\}^* \subset M\) is well-defined.

We would like to use the concepts of rearrangements of sets and functions to investigate geometric inequalities. Particularly, the isoperimetric, P\'{o}lya-Szeg\H{o} and Sobolev inequalities. It has been shown in \cite{isop_sob} that generalised isoperimetric and Sobolev inequalities are equivalent on specific Riemannian manifolds. Particularly, let \(f\) be a non-negative smooth function with compact support on a Riemannian manifold \(\Sigma^n\) and \(p\in [1,n)\). Let \(q=\frac{np}{n-p}\), then,
\begin{equation}
    ||f||_{L^q(\Sigma)} \leq \frac{(n-1)p}{n-p}||\nabla f||_{L^p(\Sigma)} + ||fH||_{L^p(\Sigma)}.
\end{equation}

Now if \(\Sigma^n\subset \mathbb{R}^{n+m}\) is a compact submanifold and \(\Omega \subset \Sigma\) has sufficiently smooth boundary \(\partial \Omega\), then,
\begin{equation}
    (\text{Vol}(\Omega))^{\frac{n-1}{n}} \leq \text{Area}(\partial \Omega) + ||H||_{L^1(\Omega)}.
\end{equation}

The aim of this paper is to work towards an equivalence between the isoperimetric and P\'{o}lya-Szeg\H{o} inequalities in a similar setting on smooth oriented Riemannian manifolds.

\vspace{7mm}

\textbf{Acknowledgements.} I would particularly like to thank my supervisor Maxwell Stolarski for support throughout the entire process of writing this paper and many helpful discussions of the material.

\section{Preliminaries}\label{sec:pre}

\subsection{Geometric Measure Theory and Integration}

When discussing the co-area formula, we will need to take volumes of lower-dimensional spaces and this is done with the Hausdorff measure.


\begin{defn}[Outer Measure]
    Given an arbitrary set \(X\), an outer measure on \(X\) is a monotone subadditive function, \(\mu:\mathcal{P}(X)\rightarrow [0,\infty]\), satisfying \(\mu(\emptyset) = 0\), where \(\mathcal{P}(X)\) denotes the collection of subsets of \(X\). Hence for countable subsets, \(A,A_1,A_2,\dots\) of \(X\),
    \begin{equation}
        \mu(A) \leq \sum_{j=1}^{\infty} \mu (A_j) \hspace{2mm} \text{when } A\subset \bigcup_{j=1}^{\infty} A_j.
    \end{equation}
    Particularly, this implies \(\mu(A)\leq \mu(B)\) whenever \(A\subset B \subset X\).
\end{defn}

Now suppose \(X\) is a metric space with metric \(d\). Define the diameter of a subset \(A\subset X\) as,
\begin{equation}
    \text{diam}(A) = \underset{x,y\in A}{\text{sup}} d(x,y).
\end{equation}

We can define the \(n\)-dimensional Hausdorff outer measure as follows, noting that we drop the 'outer' later.

\begin{defn}[Hausdorff  pre-measure]
    For any \(n\geq 0\), define the \(n\)-dimensional Hausdorff pre-measure as,
    \begin{equation}
        \mathcal{H}^n_{\delta}(A) = \frac{\omega_n}{2^n} \text{inf} \bigg\{ \sum_{j=1}^{\infty} \big(\text{diam}C_j \big)^n: A\subset \bigcup_{j=1}^{\infty}C_j, \text{diam}C_j < \delta \bigg\}.
    \end{equation}
\end{defn}

\begin{defn}[Hausdorff Measure]
    Then the \(n\)-dimensional Hausdorff measure is defined as,
    \begin{equation}
        \mathcal{H}^n(A) = \underset{\delta > 0}{\text{sup}}\hspace{1mm}\mathcal{H}^n_{\delta}(A) = \underset{\delta \rightarrow 0^+}{\mathcal{H}^n_{\delta}(A)}.
    \end{equation}
\end{defn}

On \(\mathbb{R}^n\), the outer measures \(\mathcal{L}^n, \mathcal{H}^n, \mathcal{H}^n_{\delta}\) all coincide for each \(\delta >0\). This is proven in \cite{gmt}, using Steiner symmetrisation and the isodiametric inequality,
\begin{equation}
    \mathcal{L}^n(A) \leq \frac{\omega_n}{2^n} \big(\text{diam}(A) \big)^n, \hspace{3mm} \forall A \subset \mathbb{R}^n. 
\end{equation}

\subsection{Sobolev Spaces and Lipschitz Continuous Functions}

This section gives context and intuition about the functions and function spaces that will be used in the main theorems of this text, but the rigorous details of the functional analysis is not the primary aim of this paper.

Define the Sobolev space, \(W^{k,p}(\Omega)\) as the subset of functions \(f\in L^p(\Omega)\), such that \(f\) and its weak derivatives up to order \(k\) have finite \(L^p\)-norm, that is
\begin{equation}
    W^{k,p}(\Omega) = \{\phi \in L^p(\Omega):D^{\alpha}u\in L^p(\Omega) \hspace{1mm} \forall |\alpha| \leq k\}.
\end{equation}

Taking \(p=2\), define the Hilbert space, \(H^k = W^{k,2}\), with norm \(||\cdot||_{ W^{k,2}(\Omega)}\).
Define the space, \(H^1_0(\Omega):= \{u\in H^1(\Omega):u|_{\partial \Omega}=0\}\), so for any function \(\phi \in H^1_0(\Omega)\), both \(\nabla \phi\) and \(\Delta \phi\) are well-defined. \(H^1_0(\Omega)\) can be also considered as the completion of \(C_0^{\infty}(\Omega)\), the space of infinitely differentiable functions which are nonzero only on a compact subset of \(\Omega\). That is, \(H^1_0(\Omega)\) is a complete topological vector space that contains a dense vector subspace that is isomorphic to \(C_0^{\infty}(\Omega)\).

We would like to consider Lipschitz functions \(f:\mathbb{R}^n\rightarrow\mathbb{R}^m\), which are differentiable almost everywhere from Rademacher's theorem. Recall that a function is Lipschitz on a metric space \((X,d)\) if there exists a \(K\geq 0\), such that,
\begin{equation}
    d(f(x),f(y)) \leq K\cdot d(x,y), \hspace{3mm} \forall x,y\in X.
\end{equation}
A proof of Rademacher's theorem can be seen in \cite{iso_chavel}.

It is useful to make a brief note of the relationship between Lipschitz continuous functions and the Sobolev spaces. The following theorem in quoted from \cite{sob},

\begin{theorem}
    A function \(u\in L^1_{loc}(\mathbb{R}^n)\) has a representative that is bounded and Lipschitz continuous if and only if \(u\in W^{1,\infty}(\mathbb{R}^n)\).
\end{theorem}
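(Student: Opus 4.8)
The plan is to prove both implications of this characterization of $W^{1,\infty}(\mathbb{R}^n)$ separately, since the statement is an "if and only if."

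\textbf{The easy direction} ($\Leftarrow$... actually I would present the direction "Lipschitz and bounded $\Rightarrow$ $W^{1,\infty}$" first). Suppose $u$ has a representative that is bounded and Lipschitz with constant $K$. Boundedness gives $u \in L^\infty(\mathbb{R}^n)$ immediately. By Rademacher's theorem (quoted in the excerpt), $u$ is differentiable almost everywhere, and at every point of differentiability $x$ the bound $|u(x+h) - u(x)| \le K|h|$ forces $|\nabla u(x)| \le K$. The remaining point is to check that this pointwise a.e.\ gradient actually serves as the \emph{weak} derivative: I would verify the integration-by-parts identity $\int u \, \partial_i \varphi = -\int (\partial_i u)\varphi$ for $\varphi \in C_0^\infty$ by a difference-quotient argument — write $\int u(x) \frac{\varphi(x) - \varphi(x - te_i)}{t}\,dx$, change variables in one term, and pass to the limit $t \to 0$ using dominated convergence (the difference quotients of $u$ are uniformly bounded by $K$ on the compact support of $\varphi$, and converge a.e.\ to $\partial_i u$). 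Hence $\partial_i u \in L^\infty$ with $\|\partial_i u\|_\infty \le K$, so $u \in W^{1,\infty}$.

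\textbf{The harder direction} ($u \in W^{1,\infty} \Rightarrow$ bounded Lipschitz representative). Here the issue is that $u$ is only an equivalence class, so "Lipschitz" must be interpreted as "has a Lipschitz representative." The approach is to mollify: let $u_\varepsilon = u * \rho_\varepsilon$ with a standard mollifier $\rho_\varepsilon$. Then $u_\varepsilon$ is smooth, $\nabla u_\varepsilon = (\nabla u) * \rho_\varepsilon$, so $\|\nabla u_\varepsilon\|_\infty \le \|\nabla u\|_\infty =: K$, and likewise $\|u_\varepsilon\|_\infty \le \|u\|_\infty$. Consequently each $u_\varepsilon$ is Lipschitz with constant $K$ uniformly in $\varepsilon$, and uniformly bounded. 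Since $u_\varepsilon \to u$ in $L^1_{loc}$, a subsequence converges a.e.; on the full-measure set where this holds, the inequality $|u_\varepsilon(x) - u_\varepsilon(y)| \le K|x-y|$ passes to the limit, giving $|u(x) - u(y)| \le K|x - y|$ for a.e.\ $x, y$. The final step is to upgrade this to a genuine Lipschitz representative: the function $x \mapsto u(x)$ restricted to that full-measure set is Lipschitz on a dense subset of $\mathbb{R}^n$, hence extends uniquely to a Lipschitz (constant $K$) function $\tilde u$ on all of $\mathbb{R}^n$, and $\tilde u = u$ a.e.\ because they agree on a dense set and $u$ agrees a.e.\ with its mollified limits; boundedness of $\tilde u$ follows from $\|u\|_\infty < \infty$ together with continuity. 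Alternatively, one can invoke the Lebesgue differentiation theorem to identify $\tilde u(x) = \lim_{r\to 0} \fint_{B_r(x)} u$ directly as the precise representative and verify the Lipschitz bound on it.

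\textbf{Main obstacle.} The genuinely delicate point is the second direction's final step: moving from "$|u(x)-u(y)| \le K|x-y|$ for almost every pair $(x,y)$" to the existence of an everywhere-defined Lipschitz representative. One must be careful that the exceptional null set does not disconnect things — this is handled by the density/uniform-continuity extension argument above, or by Fubini to arrange that for a.e.\ $x$ the inequality holds for a.e.\ $y$ and then taking limits along the precise representative. The first direction is essentially routine modulo the difference-quotient justification that the a.e.\ classical gradient is the weak gradient.
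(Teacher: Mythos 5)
The paper does not prove this theorem: it is quoted verbatim from \cite{sob} (Kinnunen's \emph{Sobolev Spaces} notes) and then used, so there is no internal argument to compare yours against. That said, your proposal is the standard proof and it is correct in both directions. For Lipschitz $\Rightarrow W^{1,\infty}$: Rademacher gives a.e.\ differentiability with $|\nabla u|\le K$ at points of differentiability, and your difference-quotient computation with dominated convergence (uniform bound $K$ on the quotients of $u$, compact support of $\varphi$) correctly shows the a.e.\ classical gradient is the weak gradient. For $W^{1,\infty}\Rightarrow$ Lipschitz: mollification gives $\|\nabla u_\varepsilon\|_\infty\le\|\nabla u\|_\infty$ and $\|u_\varepsilon\|_\infty\le\|u\|_\infty$, a subsequence of $u_\varepsilon$ converges a.e., and the uniform Lipschitz bound passes to the a.e.\ limit. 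You correctly flag the last step — upgrading ``Lipschitz on a full-measure set'' to a genuine everywhere-defined Lipschitz representative — as the one point requiring care, and your density/uniform-continuity extension argument (or equivalently the precise representative via Lebesgue points) closes it properly.
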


Now if we have a proper open subset \(U\subset \mathbb{R}^n\), if \(u\in W^{1,\infty}(U)\), then we get that any point has a bounded neighbourhood on which \(u\) has a representative that is Lipschitz.

\section{Results in \(\mathbb{R}^n\)}\label{sec:R^n}

Now that we have discussed the measure theory and integration, we can proof rearrangement inequalities on functions in \(\mathbb{R}^n\).

\subsection{Symmetric Decreasing Rearrangement Inequalities on \(\mathbb{R}^n\)}

First we want to prove Cavalieri’s Principle, which gives us the layer-cake decomposition of functions that was stated before. 

\begin{prop}[Cavalieri’s Principle]\label{cav}
    Given a measure space \((\Omega, \mathcal{A},\mu)\), and \(\mathcal{A}\)-measurable function, \(f:\Omega \rightarrow \mathbb{R}_+\), let \(\nu\) be a Borel measure on \([0,\infty)\). Set,
    \begin{equation}
        \phi(t) := \int_0^t d\nu(s) < \infty, \hspace{2mm} \forall t>0.
    \end{equation}
    Then, 
    \begin{equation}
        \int_{\Omega} \phi (f(x)) d\mu(x) = \int_0^{\infty} \mu\big(\{f>t\} \big)d\nu(t).
    \end{equation}
\end{prop}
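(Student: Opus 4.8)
The plan is to prove this by the standard "Fubini on the region under the graph" argument. First I would rewrite the left-hand side using the fundamental theorem of calculus for the Borel measure $\nu$: since $\phi(t) = \int_0^t d\nu(s) = \nu\big((0,t]\big)$, we have $\phi(f(x)) = \int_0^\infty \mathcal{X}_{\{(x,t)\,:\,t < f(x)\}}(x,t)\, d\nu(t)$ (being slightly careful about whether the level set is $\{f > t\}$ or $\{f \geq t\}$; since $\nu$ is assumed such that $\phi$ is finite and we integrate in $t$, the boundary $\{f = t\}$ contributes a $\nu$-null set for all but countably many $t$, so the distinction is immaterial after integration). Then I would substitute this into $\int_\Omega \phi(f(x))\, d\mu(x)$ to obtain a double integral $\int_\Omega \int_0^\infty \mathcal{X}_{\{f > t\}}(x)\, d\nu(t)\, d\mu(x)$.

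The key step is then to apply the Tonelli/Fubini theorem for the product measure $\mu \otimes \nu$ on $\Omega \times [0,\infty)$ to swap the order of integration. This is legitimate because the integrand $\mathcal{X}_{\{(x,t)\,:\,f(x) > t\}}$ is non-negative, and it is jointly measurable: the set $\{(x,t) : f(x) > t\}$ is the preimage of the open set $\{(a,b) : a > b\} \subset \mathbb{R}^2$ under the measurable map $(x,t) \mapsto (f(x), t)$, hence lies in the product $\sigma$-algebra $\mathcal{A} \otimes \mathcal{B}([0,\infty))$. Swapping gives $\int_0^\infty \int_\Omega \mathcal{X}_{\{f > t\}}(x)\, d\mu(x)\, d\nu(t) = \int_0^\infty \mu\big(\{f > t\}\big)\, d\nu(t)$, which is exactly the right-hand side.

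I would also need to address $\sigma$-finiteness, since Tonelli's theorem in its cleanest form requires $\sigma$-finite measures: $\nu$ is $\sigma$-finite on $[0,\infty)$ because $\phi(t) = \nu((0,t]) < \infty$ for all $t$ exhibits $[0,\infty) = \{0\} \cup \bigcup_{k} (0,k]$ as a countable union of sets of finite $\nu$-measure (handling the possible atom at $0$ separately or noting it is irrelevant). For $\mu$ one can either assume $\sigma$-finiteness outright, or restrict attention to the $\sigma$-finite set $\{f > 0\} = \bigcup_k \{f > 1/k\}$ — and if any $\mu(\{f > 1/k\}) = \infty$ the identity still holds with both sides equal to $+\infty$ (provided $\nu$ charges the relevant interval), so the statement is fine in the extended-real sense. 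The only real subtlety, and the part I would be most careful about, is this measurability-and-$\sigma$-finiteness bookkeeping together with the treatment of the level-set endpoint; the Fubini swap itself is routine once that groundwork is laid.
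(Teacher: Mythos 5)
Your proposal is correct and is essentially the same argument as the paper's: rewrite one side as a double integral of the indicator of the subgraph $\{(x,t): f(x) > t\}$ and apply Fubini/Tonelli to swap the order of integration (the paper starts from the right-hand side and you start from the left, but that is cosmetic). The extra bookkeeping you supply — joint measurability of the subgraph, $\sigma$-finiteness of $\nu$ via $\phi(t) < \infty$, the endpoint convention for $\phi$ — is precisely what the paper's terse proof leaves implicit, so it is a welcome supplement rather than a divergence.
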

\begin{proof}
    Applying Fubini's theorem,
    \begin{equation}
        \begin{split}
            \int_0^{\infty} \mu\big(\{f>t\} \big)d\nu(t) & = \int_0^{\infty} \bigg( \int_{\Omega} \mathcal{X}_{\{f>t\}}(x) d\mu(x)\bigg)d\nu(t) 
            \\
            & = \int_{\Omega} \bigg( \int_0^{\infty} \mathcal{X}_{\{f>t\}}(x) d\nu(t)\bigg)d\mu(x)\\
            & = \int_{\Omega} \bigg( \int_0^{f(x)} d\nu(t)\bigg)d\mu(x)\\
            & = \int_{\Omega} \phi (f(x)) d\mu(x).
        \end{split}
    \end{equation}
\end{proof}
    If we choose \(\phi(t)=t, d\nu(t) = dt\) and \(\mu\) as the Dirac delta distribution centred on \(x\), we get the layer-cake decomposition.
    \begin{equation}
        f(x) = \int_0^{\infty} \mathcal{X}_{\{y:f(y)>t\}}(x)dt.
    \end{equation}

For now, unless otherwise mentioned, take \(d\nu(t)=dt\), like was implicitly done in the introduction.

An equivalent way of writing \(f^*\) from the layer-cake decomposition is,

\begin{equation}
    f^*(x) = \text{sup} \{t>0: \text{Vol} (\{f>t\}) \geq \omega _n |x|^n\}.
\end{equation}

Using these two representations of \(f^*\), we can proof the obvious geometric fact that the rearrangement of the level sets of a function is equivalent to level sets of the rearrangement of the function.

\begin{thm}[Rearrangement of level sets is equivalent to level sets of rearrangement]\label{main_thm}
    The two sets are equal, \(\{x\in \mathbb{R}^n: f^*(x) > t\} = \{x\in \mathbb{R}^n: f(x) >t\}^*\)
\end{thm}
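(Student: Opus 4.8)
The plan is to fix a point $x\in\mathbb{R}^n$, set $r:=\omega_n|x|^n$ and $\mu(s):=\text{Vol}(\{f>s\})$, and show that the two membership conditions defining the sides are \emph{both} equivalent to the single inequality $\mu(t)>r$. By the very definition of the symmetric rearrangement of a set, $x\in\{f>t\}^*$ if and only if $\omega_n|x|^n<\text{Vol}(\{f>t\})$, i.e. if and only if $\mu(t)>r$ (with the convention that this holds whenever $\text{Vol}(\{f>t\})=\infty$). So the whole content of the theorem is the equivalence $f^*(x)>t\iff\mu(t)>r$, and for this I would work from the layer-cake definition of $f^*$ rather than from the supremum formula.

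First I would record the two structural properties of the distribution function $\mu$ that do all the work. Monotonicity of measure gives that $\mu$ is non-increasing, since $\{f>s\}\supseteq\{f>s'\}$ for $s\le s'$. Continuity from below gives right-continuity: for each $s\ge 0$ one has $\{f>s\}=\bigcup_{k\ge 1}\{f>s+1/k\}$, an increasing union, so $\mu(s)=\lim_{k\to\infty}\mu(s+1/k)=\lim_{\sigma\downarrow s}\mu(\sigma)$. Consequently, for fixed $r$ the set $E_r:=\{s\in(0,\infty):\mu(s)>r\}$ is downward closed in $(0,\infty)$ (by monotonicity) and cannot contain its own supremum $b_r:=\sup E_r$ when $b_r<\infty$ (if $\mu(b_r)>r$, right-continuity would force $\mu>r$ on a right-neighbourhood of $b_r$, contradicting the definition of the supremum). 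Hence $E_r=(0,b_r)$, an interval that is open at the top.

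Now plug this into the layer-cake formula. Since $\mathcal{X}_{\{f>s\}^*}(x)=1$ exactly when $\mu(s)>r$, that is exactly when $s\in E_r$, we get
\[
f^*(x)=\int_0^{\infty}\mathcal{X}_{\{f>s\}^*}(x)\,ds=\mathcal{L}^1\big(E_r\big)=\mathcal{L}^1\big((0,b_r)\big)=b_r .
\]
Therefore $f^*(x)>t$ if and only if $b_r>t$. For $t>0$ this reads $t\in(0,b_r)=E_r$, i.e. $\mu(t)>r$, which is precisely the condition $x\in\{f>t\}^*$, proving the theorem in this range. The remaining cases are immediate: for $t<0$ both sets equal $\mathbb{R}^n$ (as $f\ge 0$), and for $t=0$ one uses right-continuity of $\mu$ at $0$, which gives $\mu(0)=\sup_{s>0}\mu(s)$, so that $r<\mu(0)\iff E_r\neq\emptyset\iff b_r>0\iff f^*(x)>0$.

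The step I expect to require the most care is the interplay between strict and non-strict inequalities, i.e. establishing that $E_r$ is open, not closed, at $b_r$; this is exactly where right-continuity of $\mu$ is indispensable, and it is also the reason one must pass through the strict-inequality version of the supremum representation (equivalently, $f^*(x)=\inf\{s:\mu(s)\le\omega_n|x|^n\}$): a function whose distribution function is \emph{flat} at the level $r$ would otherwise produce a spurious spherical shell on which the two sets disagree, since there $f^*(x)$ equals $0$ from the layer-cake definition but would equal the flat level from the non-strict supremum. Tracking this carefully, together with the boundary case $t=0$, is the only non-routine part; everything else is bookkeeping with monotone set operations.
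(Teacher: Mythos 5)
Your proof is correct and takes a genuinely different route from the paper's. The paper argues by two separate inclusions: for $\{f>t\}^*\subset\{f^*>t\}$ it splits the layer-cake integral at $t$ and derives a contradiction from the hypothesis $\int_t^\infty\mathcal{X}_{\{f>s\}^*}(y)\,ds=0$ (invoking what it calls Fatou's lemma, really continuity of measure along the increasing family $\{f>s\}\uparrow\{f\geq t\}$ as $s\downarrow t$); for the reverse it again bounds the layer-cake integral by $t$. Your argument instead collapses the entire theorem to a single identity: you show the set $E_r=\{s>0:\mu(s)>r\}$ is the open interval $(0,b_r)$ (monotonicity gives downward closedness, right-continuity of $\mu$ shows the endpoint is excluded), hence $f^*(x)=\mathcal{L}^1(E_r)=b_r$, after which both sides of the theorem read off as the condition $t\in E_r$. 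This buys a cleaner conceptual picture — the whole statement becomes transparent bookkeeping with the distribution function — and it makes the roles of monotonicity and right-continuity of $\mu$ explicit, whereas the paper's contradiction argument keeps these implicit. Your closing remark is also a genuine catch: the paper's stated alternative formula $f^*(x)=\sup\{t>0:\mu(t)\geq\omega_n|x|^n\}$ does not agree with the layer-cake definition when $\mu$ has a flat level at height $\omega_n|x|^n$; the strict-inequality supremum, or equivalently $\inf\{s:\mu(s)\leq\omega_n|x|^n\}$, is what the layer-cake definition actually produces, and your analysis of $E_r$ is exactly where that distinction is resolved. One small point to make explicit: when you pass from $\int_t^\infty\mathcal{X}_{\{f>s\}^*}(y)\,ds$ to statements about all $s$, it is the nestedness $\{f>s\}^*\supset\{f>s'\}^*$ for $s<s'$ that upgrades an almost-everywhere conclusion to an everywhere one — you in effect use this when identifying $E_r$ as an interval, which is why your version avoids the looseness present in the paper's step "so $\forall s\geq t,\ y\notin\{f>s\}^*$."
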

\begin{proof}
Write the set \(\{x\in \mathbb{R}^n: f(x) >t\}^*\) as \(\{f>t\}\) for clarity. 
First show \(\{f>t\}^* \subset \{f^* > t\}\) for fixed \(t>0\) and \(y\in \{f>t\}^*\). Then the Lebesgue measure of the centred ball of radius \(y\) is,
\begin{equation}
    \omega _n |y|^n < |\{f>t\}|,
\end{equation}
as \(|\{f>t\}| = |\{f>t\}^*|\) by definition, and \(\{f>t\}^*\) has a finite volume. Write \(f^*\) as a layer-cake decomposition and split into two integrals,
\begin{equation}
    \begin{split}
        f^*(y) & = \int_0^{\infty} \mathcal{X} _{\{x:f(x)>s\}^*} (y) ds\\
        & = \int_0^t \mathcal{X} _{\{x:f(x)>s\}^*} (y) ds + \int_t^{\infty} \mathcal{X} _{\{x:f(x)>s\}^*} (y) ds.
    \end{split}
\end{equation}
For all \(s\in (0,t)\), \(\{f>t\} \subset \{f>s\}\), so \(\{f>t\}^* \subset \{f>s\}^*\) and \(\mathcal{X} _{\{f>s\}^*} (y) = 1\). This means the first integral is just equal to \(t\).

Suppose for a contradiction that \(\int_t^{\infty} \mathcal{X} _{\{x:f(x)>s\}^*} (y) ds = 0\), so \(\forall s\geq t,  y\notin \{f>s\}^*\). Then by Fatou's lemma,
\begin{equation}
    \begin{split}
        \omega _n |y|^n & \geq |\{f>s\}|\\
        & \geq |\liminf_{s\rightarrow t}\{f>s\}|\\
        & = |\{f\geq t\}| \geq |\{f>t\}|,
    \end{split}
\end{equation}
which is a contradiction. Hence, \(\int_t^{\infty} \mathcal{X} _{\{x:f(x)>s\}^*} (y) ds > 0\) and we have,
\begin{equation}
    f^*(y) > t \implies  \{f>t\}^* \subset \{f^* > t\}.
\end{equation}
For the reverse inclusion, fix a  \(y\notin \{f>t\}^*\), so \(\forall s\geq t\),  \(\mathcal{X} _{\{f>s\}^*} (y) = 0\). Hence for any \(w\in \{f>t\}^*\), sup\(\{s>0 : w\in \{f>t\}^*\} \leq t\). Similarly, it then follows \(f^*(y) \leq t\), meaning we have shown,
\begin{equation}
    y\notin \{f>t\}^* \implies y\notin \{f^*>t\}
\end{equation}
or equivalently,
\begin{equation}
    y\in \mathbb{R}^n \setminus \{f>t\}^* \implies y\in \mathbb{R}^n \setminus \{f^*>t\}
\end{equation}
and taking complements completes the proof. 
\end{proof}

Define the distribution function of \(f\) as,
\begin{equation}
    \mu _f (t) = \text{Vol}\left( \{x|f(x)>t\}\right).
\end{equation}
Then, since \(f^*\) is lower semicontinuous, \(\mu_f\) uniquely determines \(f^*\) and we can say that \(f\) vanishes at infinity if,
\begin{equation}
    \mu_f(t)  < \infty, \hspace{3mm} \forall t>0.
\end{equation}

Then from Theorem \ref{main_thm}, we have,
\begin{equation}\label{dist}
    \text{Vol}\left( \{x|f(x)>t\}\right) = \text{Vol}\left( \{x|f(x)>t\}^*\right) = \text{Vol}\left( \{x|f^*(x)>t\}\right),
\end{equation}
or \(\mu_f(t) = \mu_{f^*}(t), \hspace{1mm} \forall t>0\). We say \(f^*\) is equimeasurable with \(f\), and intuitively this means the corresponding level sets of the two functions have equal measure. If \(f\) is sketched as a graph of \(\mathbb{R}^n\) against \(\mathbb{R}\), the process of producing the graph of \(f^*\) is done by choosing a fixed \(t\) and sliding that level set so it is symmetric about the vertical axis and repeating for every \(t\) value, hence producing a radially decreasing symmetric function that is equimeasurable with the original function \(f\). This also provides clear geometric intuition for the previous theorem of super level sets.

We have another corollary for the correspondence of the rearrangement of a characteristic function of a set, with the characteristic function of the rearranged set.

\begin{corollary}[Rearrangement of a characteristic function of a set is the characteristic function of the rearrangement of the set]
For an open subset \(A\subset \mathbb{R}^n\), we have \(\mathcal{X} ^*_A = \mathcal{X} _{A^*}\).
\end{corollary}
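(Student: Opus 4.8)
The plan is to compute $\mathcal{X}_A^*$ directly from the layer-cake definition, exploiting the fact that $\mathcal{X}_A$ takes only the values $0$ and $1$, so that its superlevel sets are constant in $t$ on each of two intervals. First I would identify, for each $t \ge 0$, the set $\{y : \mathcal{X}_A(y) > t\}$: for $t \in [0,1)$ this is exactly $A$, while for $t \ge 1$ it is $\emptyset$. Applying the symmetric rearrangement of sets together with the convention $\emptyset^* = \emptyset$, this yields $\{y : \mathcal{X}_A(y) > t\}^* = A^*$ for $t \in [0,1)$ and $\{y : \mathcal{X}_A(y) > t\}^* = \emptyset$ for $t \ge 1$.

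Substituting into the definition $\mathcal{X}_A^*(x) = \int_0^{\infty} \mathcal{X}_{\{y : \mathcal{X}_A(y) > t\}^*}(x)\,dt$, the integrand equals $\mathcal{X}_{A^*}(x)$ for $t \in (0,1)$ and $0$ for $t \ge 1$, so the integral evaluates to $\mathcal{X}_{A^*}(x)$, which is exactly the claim. Alternatively, and more conceptually, one can invoke Theorem \ref{main_thm}: for every $t$ one has $\{x : \mathcal{X}_A^*(x) > t\} = \{x : \mathcal{X}_A(x) > t\}^*$, which is $A^*$ for $t \in [0,1)$ and $\emptyset$ for $t \ge 1$. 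A nonnegative function whose superlevel sets $\{ \cdot > t\}$ are $A^*$ for all $t < 1$ and empty for all $t \ge 1$ must coincide with $\mathcal{X}_{A^*}$: a point of $A^*$ lies in every superlevel set with $t < 1$ and in none with $t \ge 1$, forcing the value $1$, while a point outside $A^*$ lies in no superlevel set with $t \ge 0$, forcing the value $0$.

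There is no genuine obstacle here; the only care needed is bookkeeping. One should note that for $\mathcal{X}_A$ to vanish at infinity in the sense required for $\mathcal{X}_A^*$ to be defined, $A$ must have finite volume, with the degenerate cases $A = \emptyset$ and $A = \mathbb{R}^n$ handled directly by the stated conventions $\emptyset^* = \emptyset$ and $(\mathbb{R}^n)^* = \mathbb{R}^n$; and one should observe that the endpoint $t = 1$ contributes only a single point to the $t$-integral and hence does not affect the value. I would present the direct layer-cake computation as the main argument and mention the Theorem \ref{main_thm} route as a one-line remark.
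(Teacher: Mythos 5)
Your proof is correct and the core computation coincides with the paper's: identify \(\{\mathcal{X}_A > t\}^*\) as \(A^*\) for \(t\in[0,1)\) and \(\emptyset\) for \(t\ge 1\), and collapse the layer-cake integral to \(\int_0^1 \mathcal{X}_{A^*}(y)\,dt = \mathcal{X}_{A^*}(y)\). The one difference worth noting is that your main route substitutes these rearranged superlevel sets directly into the defining formula for \(\mathcal{X}_A^*\), so it needs no appeal to Theorem \ref{main_thm}, whereas the paper first expands \(\mathcal{X}_A^*\) via its own layer-cake as \(\int_0^\infty \mathcal{X}_{\{\mathcal{X}_A^*>t\}}\,dt\) and then invokes Theorem \ref{main_thm} to replace \(\{\mathcal{X}_A^*>t\}\) by \(\{\mathcal{X}_A>t\}^*\) — a small detour that lands at the same integral. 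Your secondary remark (the superlevel-set characterization via Theorem \ref{main_thm}) is in fact the closest in spirit to the paper's presentation, while your primary route is marginally more economical. Your explicit bookkeeping of the finite-volume hypothesis and the degenerate cases \(A=\emptyset\), \(A=\mathbb{R}^n\) is a useful addition that the paper leaves implicit.
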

    \begin{proof} 
    From the definition of the characteristic function, we can write,
    \begin{equation}
        \{\mathcal{X} ^*_A >t \} = \left\{
        \begin{array}{ll}
              \emptyset & , t\geq 1 \\
              A^* & , t\in [0,1) \\
              \mathbb{R}^n & , t<0 \\
        \end{array} 
        \right.
    \end{equation}
    
    Then using Theorem \ref{main_thm},
    \begin{equation}
        \begin{split}
            \mathcal{X} ^*_A (y) & = \int_0^{\infty} \mathcal{X} _{\{x:\mathcal{X} ^* _A(x)>t\}} (y) dt\\
            & = \int_0^{\infty} \mathcal{X} _{\{x:\mathcal{X} _A(x)>t\}^*} (y) dt\\
            & = \int_0^1 \mathcal{X} _{\{x\in A^*\}} (y) dt\\
            & = \mathcal{X} _{A^*} (y).
        \end{split}
    \end{equation}
    \end{proof}
Now we have the tools to move into the first proper theorem of rearrangements, as we have reduced questions about functions to questions about their level sets.

\begin{lemma}[Symmetric Decreasing Rearrangement Preserves \(L^p\)-norms]
    For all non-negative \(f\in L^p(\mathbb{R}^n)\),
    \begin{equation}
        ||f||_{L^p(\mathbb{R}^n)} = ||f^*||_{L^p(\mathbb{R}^n)}.
    \end{equation}
\end{lemma}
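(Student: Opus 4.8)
The plan is to reduce the statement to the equimeasurability relation $\mu_f(t) = \mu_{f^*}(t)$ already recorded in \eqref{dist}, by rewriting both $L^p$-norms as integrals of the distribution function via Cavalieri's Principle. Before starting one should observe that $f^*$ is an admissible object: since $f$ vanishes at infinity, every super-level set $\{f>t\}$ has finite volume, so each $\{f>t\}^*$ is a genuine centred ball of finite volume and $f^*$ is a well-defined lower semicontinuous (hence measurable) non-negative function.

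First I would apply Proposition \ref{cav} with $\Omega = \mathbb{R}^n$, $\mu = \mathcal{L}^n$, and the Borel measure $d\nu(t) = p\,t^{p-1}\,dt$ on $[0,\infty)$, for which $\phi(t) = \int_0^t p\,s^{p-1}\,ds = t^p < \infty$ for all $t>0$, so the hypotheses of Proposition \ref{cav} are met. This gives
\[
    \|f\|_{L^p(\mathbb{R}^n)}^p = \int_{\mathbb{R}^n} f(x)^p \,d\mathcal{L}^n(x) = \int_{\mathbb{R}^n} \phi(f(x))\,d\mathcal{L}^n(x) = \int_0^\infty \mu_f(t)\, p\,t^{p-1}\,dt,
\]
and the identical computation applied to $f^*$ yields $\|f^*\|_{L^p(\mathbb{R}^n)}^p = \int_0^\infty \mu_{f^*}(t)\, p\,t^{p-1}\,dt$.

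Next I would invoke equimeasurability: Theorem \ref{main_thm}, as summarised in \eqref{dist}, gives $\mu_f(t) = \mu_{f^*}(t)$ for every $t>0$, so the two integrands coincide pointwise and the integrals are equal. In particular the common value $\int_0^\infty \mu_{f^*}(t)\,p\,t^{p-1}\,dt = \|f\|_{L^p}^p$ is finite, which certifies $f^*\in L^p(\mathbb{R}^n)$; taking $p$-th roots then finishes the proof.

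There is no genuine obstacle here: the only points requiring attention are checking that $d\nu(t) = p\,t^{p-1}\,dt$ is a legitimate choice in Proposition \ref{cav} (it is, because $\phi(t)=t^p$ is finite for each $t$) and noting that the case $p=1$ is covered verbatim with $d\nu(t)=dt$. The substantive content — that symmetric rearrangement preserves the size of level sets — has already been extracted in Theorem \ref{main_thm}, so this lemma is essentially a corollary of it together with Cavalieri's Principle.
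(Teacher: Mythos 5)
Your proof is correct and follows essentially the same route as the paper: both reduce the claim to equimeasurability $\mu_f = \mu_{f^*}$ by expressing $\|f\|_p^p$ as $\int_0^\infty \mu_f(t)\,p\,t^{p-1}\,dt$. The paper derives this identity in-line via the layer-cake decomposition of $f^p$ followed by the substitution $t=s^p$, whereas you invoke Cavalieri's Principle directly with $\phi(t)=t^p$ (which the paper itself records later as Corollary \ref{decomp}); this is a cosmetic streamlining, not a different argument.
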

Sometimes we denote \(||\cdot||_{L^p(\mathbb{R}^n)}\) by just \(||\cdot||_p\) when it is clear what domain the norm is taken with respect to.
\begin{proof}
    Using the layer-cake decomposition and Fubini's theorem,
    \begin{equation}
        \begin{split}
            ||f||^p _p & = \int_{\mathbb{R}^n} f(x)^p \,d\mu(x)\\
            & = \int_{\mathbb{R}^n} \int_0^{\infty} \mathcal{X} _{\{f(x)^p > t\}} \,dt \,d\mu(x)\\
            & = \int_0^{\infty} \text{Vol} \left( \{f(x)^p > t\} \right) \,dt,
        \end{split}
    \end{equation}
    and substituting \(t = s^p, dt = ps^{p-1}ds\) gives, 
    \begin{equation}
        ||f||^p _p = \int_0^{\infty} \text{Vol} \left( \{f(x) > s\} \right) ps^{p-1} \,ds
    \end{equation}
    and as \(f\) is equimeasurable with \(f^*\), the result follows. 
\end{proof}

The case of \(p=1\) simply states that the volume under the surfaces \(f,f^*\) are equal, is intuitively clear from Theorem \ref{main_thm} and equation \ref{dist}. 

\begin{prop}[Symmetric Decreasing Rearrangement is Order-Preserving]
\end{prop}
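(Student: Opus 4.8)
I read the statement as: if $f, g : \mathbb{R}^n \to \mathbb{R}_+$ both vanish at infinity and $f \leq g$ almost everywhere, then $f^* \leq g^*$ everywhere (equivalently, $f \mapsto f^*$ is monotone with respect to the a.e.\ pointwise order). The plan is to push the inequality down to the level of super-level sets, where it becomes an elementary comparison of volumes, and then reassemble via the layer-cake decomposition.

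First I would observe that $f \leq g$ a.e.\ forces $\{f > t\} \subseteq \{g > t\}$ up to a null set for every $t > 0$, hence $\mu_f(t) = \mathrm{Vol}(\{f>t\}) \leq \mathrm{Vol}(\{g>t\}) = \mu_g(t)$. Since the symmetric rearrangement of a set of finite volume is the \emph{open} centred ball with that volume, $\{f>t\}^* = \{y : \omega_n |y|^n < \mu_f(t)\}$ and $\{g>t\}^* = \{y : \omega_n |y|^n < \mu_g(t)\}$, and the inequality $\mu_f(t) \leq \mu_g(t)$ gives directly $\{f>t\}^* \subseteq \{g>t\}^*$. Therefore $\mathcal{X}_{\{f>t\}^*}(x) \leq \mathcal{X}_{\{g>t\}^*}(x)$ for every $x \in \mathbb{R}^n$ and every $t > 0$.

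Now I would integrate this pointwise inequality in $t$ over $(0,\infty)$ and invoke the layer-cake decomposition of $f^*$ and $g^*$:
\begin{equation}
    f^*(x) = \int_0^{\infty} \mathcal{X}_{\{y:f(y)>t\}^*}(x)\,dt \;\leq\; \int_0^{\infty} \mathcal{X}_{\{y:g(y)>t\}^*}(x)\,dt = g^*(x),
\end{equation}
which is valid for every $x$ since the integrands are non-negative and ordered pointwise. Alternatively, one can run the same argument through the supremum representation $f^*(x) = \sup\{t>0 : \mu_f(t) \geq \omega_n |x|^n\}$, noting that $\mu_f \leq \mu_g$ makes the set over which the supremum for $f^*$ is taken a subset of the corresponding set for $g^*$.

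I do not expect a genuine obstacle here; the only point requiring a little care is the passage from "$f \leq g$ a.e." to "$\{f>t\} \subseteq \{g>t\}$ up to a null set", and the observation that discarding a null set does not change $\mu_f$, hence does not change $\{f>t\}^*$ or $f^*$ — so the conclusion $f^* \leq g^*$ holds genuinely everywhere (consistent with $f^*$ being a specific lower semicontinuous representative). If the intended statement were the weaker "$f \leq g$ everywhere implies $f^* \leq g^*$ everywhere", the a.e.\ bookkeeping is unnecessary and the argument above goes through verbatim.
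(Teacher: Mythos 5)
Your proof is correct and follows essentially the same route as the paper: deduce $\{f>t\}^* \subseteq \{g>t\}^*$ from $f \leq g$, then compare the layer-cake integrals of $f^*$ and $g^*$. You are a bit more careful than the paper in justifying why the rearranged super-level sets nest (via the volume comparison $\mu_f(t)\leq\mu_g(t)$ and the fact that rearrangements are centred balls) and in handling the a.e.\ version, but the core argument is the same.
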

\begin{center}
\(f(x)\leq g(x) \hspace{2mm} \forall x \in \mathbb{R}^n \implies f^*(x)\leq g^*(x) \hspace{2mm} \forall x \in \mathbb{R}^n\).   
\end{center}

\begin{proof}
    Since \(f(x)\leq g(x) \hspace{2mm} \forall x \in \mathbb{R}^n\), we have \(\{y:f(y)>t\}^* \subset \{y:g(y)>t\}^*\). Hence,
    \begin{equation}
        \begin{split}
        f^*(x) & = \int_{\mathbb{R}^n}\mathcal{X}_{\{y:f(y)>t\}^*}d\mu(x)\\
        & \leq \int_{\mathbb{R}^n}\mathcal{X}_{\{y:g(y)>t\}^*}d\mu(x)\\
        & = g^*(x).
        \end{split}
    \end{equation}
\end{proof}

\begin{theorem}[Hardy-Littlewood Inequality]
    Suppose \(f,g: \mathbb{R}^n \rightarrow \mathbb{R}\) are measurable, non-negative functions that vanish at infinity. Then,
\end{theorem}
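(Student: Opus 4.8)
The plan is to reduce the statement to a purely set-theoretic fact about rearranged sets by applying the layer-cake decomposition twice, once to $f$ and once to $g$. First I would write $f(x)=\int_0^\infty \mathcal{X}_{\{f>s\}}(x)\,ds$ and $g(x)=\int_0^\infty \mathcal{X}_{\{g>t\}}(x)\,dt$, multiply, and integrate over $\mathbb{R}^n$. Since every integrand is non-negative, Tonelli's theorem permits exchanging the order of integration freely, giving
\[
\int_{\mathbb{R}^n} f g \, d\mu \;=\; \int_0^\infty \int_0^\infty \text{Vol}\big(\{f>s\}\cap\{g>t\}\big)\,ds\,dt,
\]
and the identical manipulation applied to $f^*,g^*$ yields the same expression with $\{f>s\}$ replaced by $\{f^*>s\}$ and $\{g>t\}$ by $\{g^*>t\}$.

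Next I would isolate the key lemma: for measurable sets $A,B\subset\mathbb{R}^n$ of finite volume, $\text{Vol}(A\cap B)\le\text{Vol}(A^*\cap B^*)$. The right-hand side is immediate to compute, since $A^*$ and $B^*$ are both open balls centred at the origin, so one contains the other; hence $A^*\cap B^*$ is whichever ball is smaller and $\text{Vol}(A^*\cap B^*)=\min\{\text{Vol}(A^*),\text{Vol}(B^*)\}=\min\{\text{Vol}(A),\text{Vol}(B)\}$. On the other hand $A\cap B\subset A$ and $A\cap B\subset B$ force $\text{Vol}(A\cap B)\le\min\{\text{Vol}(A),\text{Vol}(B)\}$ by monotonicity of Lebesgue measure, which is exactly the claimed bound.

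Finally I would apply this lemma with $A=\{f>s\}$ and $B=\{g>t\}$, invoking Theorem \ref{main_thm} to identify $\{f>s\}^*=\{f^*>s\}$ and $\{g>t\}^*=\{g^*>t\}$, so that $\text{Vol}(\{f>s\}\cap\{g>t\})\le\text{Vol}(\{f^*>s\}\cap\{g^*>t\})$ holds pointwise in $(s,t)$. Integrating this inequality in $s$ and $t$ and reading off the two expressions from the first step yields the Hardy-Littlewood inequality. I do not anticipate a serious obstacle: the only points needing care are that $f$ and $g$ vanishing at infinity guarantees the super-level sets have finite volume, so the rearrangements are genuinely bounded balls and the lemma applies, and that the argument remains valid when both sides equal $+\infty$, since Tonelli's theorem requires no integrability hypothesis.
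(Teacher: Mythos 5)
Your proposal is correct and follows essentially the same approach as the paper: reduce to the set inequality $\text{Vol}(A\cap B)\le\min\{\text{Vol}(A),\text{Vol}(B)\}=\text{Vol}(A^*\cap B^*)$ via the layer-cake decomposition of both functions and Tonelli's theorem. You are slightly more explicit than the paper in invoking Theorem \ref{main_thm} to identify $\{f>s\}^*$ with $\{f^*>s\}$ and in noting that Tonelli needs no integrability hypothesis, which are good points of care.
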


\begin{equation}
    \int _{\mathbb{R}^n} f(x)g(x) d\mu(x) \leq \int _{\mathbb{R}^n} f^*(x)g^*(x) d\mu(x).
\end{equation}

\begin{proof}
    Consider \(f=\mathcal{X} _A, g = \mathcal{X} _B\) for measurable open subsets, \(A,B\subset \mathbb{R}^n\), then 
    \begin{equation}
        \text{Vol}\left(A^* \cap B^* \right) = \text{min}\{\text{Vol}\left(A^*\right), \text{Vol}\left(B^* \right)\} = \text{min}\{\text{Vol}\left(A\right), \text{Vol}\left( B \right)\} \geq \text{Vol}\left(A \cap B \right).
    \end{equation}
    Now for general \(f,g\), use the Layer cake decomposition,
    \begin{equation}
        \begin{split}
            \int _{\mathbb{R}^n} f(x)g(x) & = \int _{\mathbb{R}^n} \int _0^{\infty}  \int _0^{\infty} \mathcal{X} _{\{f>s\}} (x) \mathcal{X} _{\{g>t\}} (x) \,ds \,dt d\mu(x)\\
            & = \int _0^{\infty} \int _0^{\infty} \text{Vol} \left( \{f>s\} \cap \{g>t\} \right) \,ds \,dt\\
            & \leq \int _0^{\infty} \int _0^{\infty} \text{Vol} \left( \{f^*>s\} \cap \{g^*>t\} \right) \,ds \,dt\\
            & = \int _{\mathbb{R}^n} f^*(x)g^*(x) d\mu(x).
        \end{split}
    \end{equation}
\end{proof}
\begin{lemma}
    For functions \(f,g\) non-negative and summable,
\end{lemma}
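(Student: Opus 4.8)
The plan is to prove the non-expansivity of symmetric decreasing rearrangement, i.e. that \(\int_{\mathbb{R}^n}|f^*-g^*|\,d\mu \le \int_{\mathbb{R}^n}|f-g|\,d\mu\). The starting point is the elementary identity \(|a-b| = a+b-2\min(a,b)\), valid for \(a,b\ge 0\), which gives \(\int_{\mathbb{R}^n}|f-g|\,d\mu = \int_{\mathbb{R}^n}f\,d\mu + \int_{\mathbb{R}^n}g\,d\mu - 2\int_{\mathbb{R}^n}\min(f,g)\,d\mu\), and the analogous identity for \(f^*\) and \(g^*\). Since \(\int f^*\,d\mu = \int f\,d\mu\) and \(\int g^*\,d\mu = \int g\,d\mu\) by the lemma that symmetric decreasing rearrangement preserves \(L^p\)-norms (applied with \(p=1\)), the whole claim reduces to the single inequality \(\int_{\mathbb{R}^n}\min(f,g)\,d\mu \le \int_{\mathbb{R}^n}\min(f^*,g^*)\,d\mu\).

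Next I would apply Cavalieri's Principle (Proposition \ref{cav}) together with the layer-cake decomposition to \(\min(f,g)\), using that \(\{\min(f,g)>t\} = \{f>t\}\cap\{g>t\}\), to write \(\int_{\mathbb{R}^n}\min(f,g)\,d\mu = \int_0^\infty \mu\big(\{f>t\}\cap\{g>t\}\big)\,dt\), and similarly \(\int_{\mathbb{R}^n}\min(f^*,g^*)\,d\mu = \int_0^\infty \mu\big(\{f^*>t\}\cap\{g^*>t\}\big)\,dt\). Then, for each fixed \(t>0\), Theorem \ref{main_thm} identifies \(\{f^*>t\} = \{f>t\}^*\) and \(\{g^*>t\} = \{g>t\}^*\) as two open balls centred at the origin; the intersection of concentric balls is the smaller ball, so \(\mu\big(\{f^*>t\}\cap\{g^*>t\}\big) = \min\big(\mu(\{f>t\}^*),\mu(\{g>t\}^*)\big) = \min\big(\mu(\{f>t\}),\mu(\{g>t\})\big)\) by equimeasurability. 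Since always \(\mu\big(\{f>t\}\cap\{g>t\}\big) \le \min\big(\mu(\{f>t\}),\mu(\{g>t\})\big)\), integrating in \(t\) over \((0,\infty)\) yields the desired inequality and hence the lemma.

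This reuses exactly the geometric fact already exploited in the proof of the Hardy–Littlewood inequality, so the only real care needed is analytic bookkeeping: the hypothesis that \(f\) and \(g\) are summable ensures \(\int f\,d\mu\) and \(\int g\,d\mu\) are finite, so the subtraction in the identity above is meaningful and no \(\infty-\infty\) ambiguity arises; and the Fubini–Tonelli step inside Cavalieri's Principle is legitimate because every integrand in sight is non-negative. I expect the main (and essentially only) obstacle to be invoking the finiteness hypothesis precisely at the point where \(\int f^*\,d\mu + \int g^*\,d\mu\) is cancelled against \(\int f\,d\mu + \int g\,d\mu\); the remaining inequality is then immediate. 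I note that the corresponding statement in \(L^p\) for general \(p\in(1,\infty)\) is true as well but genuinely more delicate, and would require a separate argument rather than this level-set bookkeeping.
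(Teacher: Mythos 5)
You have proven a correct result, but not the lemma in question. The \texttt{lemma} environment in the source is truncated---the displayed conclusion lives in the equation that immediately follows it---so the statement you were handed was incomplete, and you guessed a different conclusion. The paper's lemma is the characteristic-function inequality
\begin{equation}
\int_{\mathbb{R}^n} f\,\mathcal{X}_{\{g\leq s\}}\,d\mu \;\geq\; \int_{\mathbb{R}^n} f^*\,\mathcal{X}_{\{g^*\leq s\}}\,d\mu,
\end{equation}
for a single fixed cutoff \(s\). The paper proves this in two lines: write \(\mathcal{X}_{\{g\leq s\}} = 1 - \mathcal{X}_{\{g>s\}}\), apply Hardy--Littlewood to \(\int f\,\mathcal{X}_{\{g>s\}}\,d\mu\) using that \(\bigl(\mathcal{X}_{\{g>s\}}\bigr)^* = \mathcal{X}_{\{g^*>s\}}\), and cancel \(\int f\,d\mu\) against \(\int f^*\,d\mu\). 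What you proved instead is the \(L^1\) non-expansivity \(\lVert f-g\rVert_1 \geq \lVert f^*-g^*\rVert_1\), which in the paper appears as the \(p=1\) case of the very next theorem (``Rearrangement Decreases \(L^p\) Distance''), and whose proof there actually \emph{invokes} this lemma. So you have produced a downstream corollary, not the lemma itself.

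That said, your argument is sound for what it claims, and the technique does transfer. The identity \(|a-b| = a+b-2\min(a,b)\), the reduction to \(\int\min(f,g)\,d\mu \leq \int\min(f^*,g^*)\,d\mu\), the layer-cake identity \(\{\min(f,g)>t\} = \{f>t\}\cap\{g>t\}\), the nesting of concentric balls, and the care about summability to justify the cancellation are all correct. To hit the actual lemma by the same method, layer-cake the left side as \(\int_0^\infty \mu\bigl(\{f>t\}\cap\{g\leq s\}\bigr)\,dt = \int_0^\infty \bigl[\mu(\{f>t\}) - \mu(\{f>t\}\cap\{g>s\})\bigr]\,dt\), bound \(\mu(\{f>t\}\cap\{g>s\}) \leq \min\bigl(\mu(\{f>t\}),\mu(\{g>s\})\bigr) = \mu(\{f^*>t\}\cap\{g^*>s\})\) exactly as you already did, and conclude using \(\mu(\{f>t\}) = \mu(\{f^*>t\})\). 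The paper's Hardy--Littlewood route is shorter; your route generalizes the same intersection estimate but should be aimed at the stated conclusion, not at the \(L^1\) distance result.
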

\begin{equation}
    \int _{\mathbb{R}^n} f\mathcal{X} _{\{g\leq s\}} d\mu(x) \geq \int _{\mathbb{R}^n} f^*\mathcal{X} _{\{g^*\leq s\}} d\mu(x).
\end{equation}
\begin{proof}
Using \(\mathcal{X} _{\{g\leq s\}} = 1 - \mathcal{X} _{\{g > s\}}\),
    \begin{equation}
        \int _{\mathbb{R}^n} f\mathcal{X} _{\{g\leq s\}} d\mu(x) = \int _{\mathbb{R}^n} f d\mu(x) - \int _{\mathbb{R}^n} f\mathcal{X} _{\{g > s\}} d\mu(x),
    \end{equation}
using the Hardy-Littlewood inequality on both integrals and previous results,
    \begin{equation}
        \begin{split}
            \int _{\mathbb{R}^n} f\mathcal{X} _{\{g\leq s\}} d\mu(x) & \geq \int _{\mathbb{R}^n} f^* d\mu(x) - \int _{\mathbb{R}^n} f^*\mathcal{X} ^* _{\{g > s\}} d\mu(x),\\
            & = \int _{\mathbb{R}^n} f^* d\mu(x) - \int _{\mathbb{R}^n} f^*\mathcal{X} _{\{g^* > s\}} d\mu(x)\\
            & = \int _{\mathbb{R}^n} f^*(1-\mathcal{X} _{\{g^* > s\}}) d\mu(x)\\
            & = \int _{\mathbb{R}^n} f^*\mathcal{X} _{\{g^* \leq s\}} d\mu(x).
        \end{split}
    \end{equation}
\end{proof}

We need the following corollary of Cavalieri’s principle.
    \begin{corollary}\label{decomp}
        \begin{equation}
            ||f(x)||_{L_p(\Omega)}^p := \int_{\Omega} |f(x)|^p d\mu(x) = p \int^{\infty}_0 \mu\big(\{f>t\} \big)t^{p-1} dt.
        \end{equation}
    \end{corollary}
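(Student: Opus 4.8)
The plan is to apply Cavalieri's Principle (Proposition \ref{cav}) with a judicious choice of the Borel measure \(\nu\) on \([0,\infty)\). Taking \(\phi(t) = t^p\), we need \(\nu\) so that \(\phi(t) = \int_0^t d\nu(s)\) for every \(t>0\); since \(t^p = \int_0^t p s^{p-1}\,ds\), the choice \(d\nu(s) = p s^{p-1}\,ds\) works, and \(\phi(t)<\infty\) for all \(t\), as the hypothesis of Proposition \ref{cav} requires. Assuming first that \(f\geq 0\) (so \(|f| = f\)), Proposition \ref{cav} then gives directly
\[
\int_\Omega |f(x)|^p\,d\mu(x) = \int_\Omega \phi(f(x))\,d\mu(x) = \int_0^\infty \mu\big(\{f>t\}\big)\,d\nu(t) = p\int_0^\infty \mu\big(\{f>t\}\big)\,t^{p-1}\,dt,
\]
which is exactly the asserted identity.

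For general (signed) \(f\), I would run the same argument with \(f\) replaced by \(|f|\) throughout: \(|f|\) is still \(\mathcal{A}\)-measurable and non-negative, \(\phi(|f(x)|) = |f(x)|^p\), and the super-level sets become \(\{|f|>t\}\); reading \(\{f>t\}\) in the displayed statement as shorthand for \(\{|f|>t\}\) (the usual convention for the distribution function) makes the two agree.

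Alternatively, and essentially equivalently, one can bypass the general form of Cavalieri and re-use the computation already carried out in the proof that symmetric decreasing rearrangement preserves \(L^p\)-norms: expand \(|f(x)|^p = \int_0^\infty \mathcal{X}_{\{|f|^p>t\}}(x)\,dt\) by the layer-cake decomposition, interchange the order of integration by Fubini (legitimate since the integrand is non-negative) to get \(\int_0^\infty \mu\big(\{|f|^p>t\}\big)\,dt\), and then substitute \(t = s^p\), \(dt = p s^{p-1}\,ds\), using \(\{|f|^p>s^p\} = \{|f|>s\}\). There is no substantive obstacle here; the only points needing a moment's care are reconciling the two phrasings of the hypothesis on \(f\) (non-negativity versus summability) and checking that \(\phi(t)=t^p\) is indeed generated by \(d\nu(s)=p s^{p-1}\,ds\) in the precise sense demanded by Proposition \ref{cav}.
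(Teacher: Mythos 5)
Your proposal is correct and takes essentially the same route as the paper: apply Cavalieri's Principle with \(d\nu(s) = p s^{p-1}\,ds\), so \(\phi(t) = t^p\), and read off the identity. The extra remarks about absolute values and the alternative Fubini/layer-cake computation are harmless elaborations but not different in substance.
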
   
\begin{proof}
    Set \(d\nu(t) = pt^{p-1}dt\) in \ref{cav}, so \(\phi(t) = t^p\) and we get the above result. 
\end{proof}

\begin{thm}[Rearrangement Decreases \(L^p\) Distance]
    \(||f-g||_p \geq ||f^* - g^*||_p, \hspace{1mm} 1\leq p \leq \infty \hspace{1mm} \forall x\in \mathbb{R}^n\), with equality if and only if \(f\) and \(g\) have the same family of level sets. 
\end{thm}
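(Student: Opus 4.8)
The plan is to reduce the estimate to a one-parameter family of ``truncated difference'' inequalities, each of which follows directly from the Lemma $\int_{\mathbb{R}^n}f\,\mathcal{X}_{\{g\le s\}}\,dx\ge\int_{\mathbb{R}^n}f^{*}\,\mathcal{X}_{\{g^{*}\le s\}}\,dx$, and then to recover the $L^{p}$ statement by integrating against a convexity weight.

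\emph{Step 1 (the building block).} For all $a,b\ge 0$ and $c\ge 0$ one has the elementary identity $(a-b-c)_{+}=\int_{0}^{\infty}\mathcal{X}_{\{a>c+s\}}\,\mathcal{X}_{\{b\le s\}}\,ds$, since the right-hand side is the length of $[\,b,\,a-c\,)\cap[0,\infty)$. Putting $a=f(x)$, $b=g(x)$, integrating over $\mathbb{R}^n$, and using Tonelli (all integrands nonnegative),
\[
\Phi_{f,g}(c):=\int_{\mathbb{R}^n}(f-g-c)_{+}\,dx \;=\; \int_{0}^{\infty}\Big(\int_{\mathbb{R}^n}\mathcal{X}_{\{f>c+s\}}\,\mathcal{X}_{\{g\le s\}}\,dx\Big)\,ds .
\]
For fixed $c\ge 0$ and $s>0$ the function $\mathcal{X}_{\{f>c+s\}}$ is nonnegative and summable (as $f$ vanishes at infinity), so the quoted Lemma applies with $\mathcal{X}_{\{f>c+s\}}$ in the role of $f$ and $g$ in the role of $g$ (its proof only uses that $g$ vanishes at infinity); combined with the corollary $\mathcal{X}_{A}^{*}=\mathcal{X}_{A^{*}}$ and Theorem \ref{main_thm}, which give $\mathcal{X}_{\{f>c+s\}}^{*}=\mathcal{X}_{\{f^{*}>c+s\}}$, it yields
\[
\int_{\mathbb{R}^n}\mathcal{X}_{\{f>c+s\}}\,\mathcal{X}_{\{g\le s\}}\,dx \;\ge\; \int_{\mathbb{R}^n}\mathcal{X}_{\{f^{*}>c+s\}}\,\mathcal{X}_{\{g^{*}\le s\}}\,dx .
\]
Integrating in $s$ and reading the first display backwards for the pair $(f^{*},g^{*})$ gives $\Phi_{f,g}(c)\ge\Phi_{f^{*},g^{*}}(c)$ for every $c\ge 0$; the same argument with $f,g$ interchanged gives $\Phi_{g,f}(c)\ge\Phi_{g^{*},f^{*}}(c)$.

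\emph{Step 2 (from truncations to $L^{p}$).} For $1<p<\infty$ one has the pointwise identity $(a)_{+}^{p}=p(p-1)\int_{0}^{\infty}(a-c)_{+}\,c^{\,p-2}\,dc$ for every real $a$ (as is seen by differentiating twice in $a$). Since $|f-g|^{p}=(f-g)_{+}^{p}+(g-f)_{+}^{p}$ pointwise, integrating and applying Tonelli gives
\[
\|f-g\|_{p}^{p}\;=\;p(p-1)\int_{0}^{\infty}c^{\,p-2}\big(\Phi_{f,g}(c)+\Phi_{g,f}(c)\big)\,dc ,
\]
so inserting the Step~1 inequalities under the integral sign and running this identity backwards for $(f^{*},g^{*})$ yields $\|f-g\|_{p}\ge\|f^{*}-g^{*}\|_{p}$. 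The case $p=1$ is immediate since $\|f-g\|_{1}=\Phi_{f,g}(0)+\Phi_{g,f}(0)$. The case $p=\infty$ has a short direct proof: if $\|f-g\|_{\infty}\le\varepsilon$ then $\{g>t+\varepsilon\}\subseteq\{f>t\}$, hence $\mu_{f}(t)\ge\mu_{g}(t+\varepsilon)$ for all $t$, so the generalised inverses of $\mu_{f}$ and $\mu_{g}$ — i.e. the radial profiles in the representation $f^{*}(x)=\sup\{t>0:\mu_{f}(t)\ge\omega_{n}|x|^{n}\}$ — differ by at most $\varepsilon$, giving $\|f^{*}-g^{*}\|_{\infty}\le\varepsilon$.

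\emph{Step 3 (equality), and the main difficulty.} Equality in the norm estimate forces, through Step~2, equality $\Phi_{f,g}(c)=\Phi_{f^{*},g^{*}}(c)$ (and $\Phi_{g,f}=\Phi_{g^{*},f^{*}}$) for a.e.\ $c>0$ when $1<p<\infty$, and at $c=0$ when $p=1$; this propagates back to equality in the Lemma, hence in the underlying Hardy--Littlewood inequality, for a.e.\ $s$, which forces the super-level sets $\{f>c+s\}$ and $\{g>s\}$ to be nested. Letting $c,s$ vary, this is exactly the statement that $f$ and $g$ share one common nested family of super-level sets, equivalently $f=\Phi\circ g$ off a null set for some nondecreasing $\Phi$. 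Conversely, if $f$ and $g$ have the same family of level sets then $f^{*}=\Phi\circ g^{*}$ with the same $\Phi$, so $|f-g|=\Psi\circ g$ and $|f^{*}-g^{*}|=\Psi\circ g^{*}$ with $\Psi(u)=|\Phi(u)-u|$; as $g$ and $g^{*}$ are equimeasurable, so are $\Psi\circ g$ and $\Psi\circ g^{*}$, and the norms coincide. The main obstacle is not conceptual but lies in the bookkeeping of Step~1: one must check the Lemma's hypotheses really hold when its ``$f$'' slot carries the truncated indicator $\mathcal{X}_{\{f>c+s\}}$ (summability, which needs $f$ to vanish at infinity and $c+s>0$) while its ``$g$'' slot carries a merely $L^{p}$ function, which means tracking which integrals are finite, handling the exceptional value $s=0$ (null for $ds$) separately, and allowing $\Phi_{f,g}(c)=\infty$ for small $c$; and in Step~3 one must fix the precise meaning of ``same family of level sets'' and upgrade the a.e.\ Hardy--Littlewood equalities to genuine nesting of the level sets.
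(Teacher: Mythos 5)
Your proof is correct and takes a genuinely different (and cleaner) route than the paper's. The paper works directly with the layer-cake identity $(f-g)_+^p = p\int_0^\infty (f-t)_+^{p-1}\mathcal{X}_{\{g\le t\}}\,dt$ and applies Lemma~3.12 with $(f(x)-t)_+^{p-1}$ in the ``$f$'' slot, in effect specialising the non-expansivity theorem (with $J(x)=|x|^p$) that appears right afterwards — though the paper's write-up is marred by a spurious $\sup_{t\in\mathbb{N}}$ that should not be there and is never cleanly removed. Your proof instead factors the argument through the one-parameter family $\Phi_{f,g}(c)=\int(f-g-c)_+$: you prove $\Phi_{f,g}(c)\ge\Phi_{f^*,g^*}(c)$ for every $c$ using Lemma~3.12 only for indicator functions (the easiest admissible inputs, with summability guaranteed by vanishing at infinity), and you then recover the $L^p$ statement by integrating against the convexity weight $c^{p-2}$. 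This buys you three things the paper's proof does not deliver: the Lemma is only invoked in its most elementary form, the intermediate inequality $\Phi_{f,g}\ge\Phi_{f^*,g^*}$ is a $p$-independent statement that is of interest in its own right, and the endpoint $p=\infty$ is handled by a short separate argument (which is correct: $g\le f+\varepsilon$ gives $\mu_g(t)\le\mu_f(t-\varepsilon)$, whence $f^*\le g^*+\varepsilon$ from the $\sup$-representation, and symmetrically). After a change of variables $t=c+s$ one sees that the two proofs are computationally equivalent; yours is the Tonelli-disintegrated version of theirs. You also correctly note that the paper only asserts the equality case without proving it, and your Step~3 sketch of how to propagate equality back through the Hardy--Littlewood inequalities, together with your honest flagging of the bookkeeping that needs finishing (when $\Phi_{f,g}(c)=\infty$, upgrading a.e.\ equality to genuine nesting, and fixing what ``same family of level sets'' means), is a fair assessment of where the real work remains.
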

\begin{proof}
    Write,
    \begin{equation}
        |f(x)-g(x)| = [f(x)-g(x)]_+ + [g(x)-f(x)]_+,
    \end{equation}
    where \([\dots]_+\) denotes the positive part of the inside argument. This is equivalent to writing,
    \begin{equation}
        |f(x)-g(x)| = \underset{t\in \mathbb{N}}{\text{sup}}\bigg[[f(x)-t]_{+} \mathcal{X} _{\{g\leq t\}} + [g(x)-t]_{+} \mathcal{X} _{\{f\leq t\}}\bigg],
    \end{equation}
    and so appealing to \autoref{decomp}, we can write,
    \begin{equation}
        |f(x)-g(x)|^p = p\hspace{1mm} \underset{t\in \mathbb{N}}{\text{sup}}\int _0 ^{\infty} [f(x)-t]_{+}^{p-1} \mathcal{X} _{\{g\leq t\}} + [g(x)-t]_{+}^{p-1} \mathcal{X} _{\{f\leq t\}} \,dt,
    \end{equation}
    and integrating over \(\mathbb{R}^n\),
    \begin{equation}
        \begin{split}
            ||f(x)-g(x)||^p_p & = \int _{\mathbb{R}^n} |f(x)-g(x)|^p d\mu(x),\\
            & = \underset{t\in \mathbb{N}}{\text{sup}}\int _{\mathbb{R}^n} p\int _0 ^{\infty} [f(x)-t]_{+}^{p-1} \mathcal{X} _{\{g\leq t\}} + [g(x)-t]_{+}^{p-1} \mathcal{X} _{\{f\leq t\}} \,dt d\mu(x),
        \end{split}
    \end{equation}
    and applying the previous lemma gives,
    \begin{equation}
        \begin{split}
            ||f(x)-g(x)||^p_p & \geq \underset{t\in \mathbb{N}}{\text{sup}} \int _{\mathbb{R}^n} p\int _0 ^{\infty} [f^*(x)-t]_{+}^{p-1} \mathcal{X} _{\{g^*\leq t\}} + [g^*(x)-t]_{+}^{p-1} \mathcal{X} _{\{f^*\leq t\}} \,dt d\mu(x),\\
            & = ||f^*(x)-g^*(x)||^p_p,
        \end{split}
    \end{equation}
    using that \(\left( [f(x)-t]_{+}^{p-1} \right) ^* = [f^*(x)-t]_{+}^{p-1}\) and the same for \(g\).
\end{proof}
We have equality if and only if \((f(x)-g(x))(f(y)-g(y)) >= 0\) for almost all \(x,y\).

We can derive a stronger result, for a more general centred convex function, than just the \(L^p\)-norm.\cite{analysis}

\begin{thm}[Non-expansivity of Rearrangement]
    Let \(J:\mathbb{R} \rightarrow \mathbb{R}\) be a non-negative convex function with \(J(0)=0\). Then for non-negative functions \(f\) and \(g\) on \(\mathbb{R}^n\), vanishing at infinity
    \begin{equation}
        \int_{\mathbb{R}^n} J\left(f^*(x)-g^*(x) \right) d\mu(x) \leq \int_{\mathbb{R}^n} J\left(f(x)-g(x) \right) d\mu(x).
    \end{equation}
\end{thm}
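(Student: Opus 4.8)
The plan is to reduce the statement to the $L^p$ case already proved, by representing a general non-negative convex function $J$ with $J(0)=0$ as a superposition of the elementary building blocks $[\,\cdot\,-t]_+$ and $[-\,\cdot\,-t]_+$, exactly the functions that appeared in the proof that rearrangement decreases $L^p$ distance. Concretely, since $J$ is convex with $J(0)=0$, its right derivative $J'_+$ is non-decreasing, and one can write $J(s)=\int_0^\infty [s-t]_+\,d\mu_+(t)+\int_0^\infty [-s-t]_+\,d\mu_-(t)$ for suitable non-negative Borel measures $\mu_\pm$ on $[0,\infty)$ encoding the increments of $J'_+$ on the positive and negative half-lines respectively (this is the ``layer-cake'' representation of a convex function). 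The key point is that each slice function $s\mapsto[s-t]_+$ is non-negative, non-decreasing, and vanishes at $s=0$, and likewise for $s\mapsto[-s-t]_+$ with ``non-increasing'' in place of ``non-decreasing''.

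With this representation in hand, I would substitute it into $\int_{\mathbb{R}^n} J(f-g)\,d\mu$ and apply Fubini (all integrands are non-negative, so this is justified by Tonelli) to interchange the $t$-integration against $d\mu_\pm$ with the spatial integration. This turns the claim into the family of inequalities
\begin{equation}
    \int_{\mathbb{R}^n} [f^*(x)-g^*(x)-t]_+\,d\mu(x) \;\le\; \int_{\mathbb{R}^n} [f(x)-g(x)-t]_+\,d\mu(x)
\end{equation}
for every $t\ge 0$, together with the companion inequality obtained by swapping the roles of $f$ and $g$ (which handles the $\mu_-$ part). So it suffices to prove these ``$p=1$'' inequalities for the truncated differences.

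To prove $\int [f-g-t]_+ \ge \int [f^*-g^*-t]_+$, I would mimic the structure of the earlier $L^p$-distance argument at the exponent $p=1$: write $[f(x)-g(x)-t]_+ = \int_0^\infty \mathcal{X}_{\{f > t+s\}}(x)\,\mathcal{X}_{\{g \le t+s\}}(x)\,ds$ (a pointwise identity, checking the two cases $f(x)\le g(x)+t$ and $f(x)>g(x)+t$), integrate over $\mathbb{R}^n$, and apply Tonelli to get $\int_0^\infty \mathrm{Vol}\big(\{f>t+s\}\cap\{g\le t+s\}\big)\,ds$. Then invoke the lemma proved just above (that $\int f\,\mathcal{X}_{\{g\le s\}} \ge \int f^*\,\mathcal{X}_{\{g^*\le s\}}$), applied with $f$ replaced by $\mathcal{X}_{\{f>t+s\}}$, to conclude $\mathrm{Vol}\big(\{f>t+s\}\cap\{g\le t+s\}\big)\ge \mathrm{Vol}\big(\{f^*>t+s\}\cap\{g^*\le t+s\}\big)$ for each $s$, and integrate back up. The companion inequality with $f,g$ swapped is identical. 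The main obstacle is the bookkeeping in the first paragraph: establishing the integral representation of $J$ with the correct measures and making sure the decomposition into a ``positive part'' and a ``negative part'' of the argument is clean enough that the subsequent Fubini steps go through without integrability pathologies — once that representation is pinned down, the rest is a direct reduction to results already in hand.
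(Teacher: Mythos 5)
Your strategy is sound and is a genuine, if closely related, variant of the paper's argument. The paper splits \(J = J_+ + J_-\) and uses the fundamental theorem of calculus once, writing \(J_+(f-g)=\int_0^\infty J_+'(f(x)-s)\,\mathcal X_{\{g\leq s\}}\,ds\), and then applies the lemma \(\int h\,\mathcal X_{\{g\leq s\}}\geq\int h^*\,\mathcal X_{\{g^*\leq s\}}\) with \(h = J_+'(f(\cdot)-s)\); this implicitly uses the fact that rearrangement commutes with post-composition by a non-decreasing function vanishing near zero, namely \(\bigl(J_+'(f(\cdot)-s)\bigr)^* = J_+'(f^*(\cdot)-s)\). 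You instead integrate by parts one step further, representing \(J\) by the second-derivative measures \(\mu_\pm\) against the elementary kinks \([\cdot-t]_+\) and \([-\cdot-t]_+\), and then reduce each kink to the characteristic-function level via the same lemma, invoking only the fact that \((\mathcal X_{\{f>u\}})^*=\mathcal X_{\{f^*>u\}}\). That buys a slightly more robust reduction — everything bottoms out in indicator functions, which the paper has already proved commute with rearrangement — at the cost of one more layer of Tonelli bookkeeping. Both routes hinge on the same key lemma.

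One concrete error to fix: the pointwise identity you use is wrong. You wrote
\begin{equation*}
    [f(x)-g(x)-t]_+ \;=\; \int_0^\infty \mathcal X_{\{f>t+s\}}(x)\,\mathcal X_{\{g\leq t+s\}}(x)\,ds,
\end{equation*}
but the right-hand side computes the measure of \(\{u\geq t : g(x)\leq u < f(x)\}\), which is \([f(x)-\max(g(x),t)]_+\), not \([f(x)-g(x)-t]_+\). (Test \(f=5,\ g=1,\ t=2\): the left side is \(2\), your right side is \(3\).) The correct identity, obtained by applying the \(t=0\) version to \(f-t\) in place of \(f\), is
\begin{equation*}
    [f(x)-g(x)-t]_+ \;=\; \int_0^\infty \mathcal X_{\{f>t+s\}}(x)\,\mathcal X_{\{g\leq s\}}(x)\,ds.
\end{equation*}
With this, the lemma applies verbatim (take the non-negative function to be \(\mathcal X_{\{f>t+s\}}\) and the cutoff to be \(s\)), giving \(\mathrm{Vol}(\{f>t+s\}\cap\{g\leq s\})\geq \mathrm{Vol}(\{f^*>t+s\}\cap\{g^*\leq s\})\) for each \(s\), and integrating in \(s\) yields the truncated-difference inequality. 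After that, your Tonelli reduction to the representation \(J(s)=\int[s-t]_+\,d\mu_+(t)+\int[-s-t]_+\,d\mu_-(t)\) completes the proof as you describe.
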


\begin{proof}
    Split \(J=J_+ + J_-\), where \(J_+\) is the positive part of \(J\) and similar for \(J_-\) and note both parts are still convex. Hence, \(J_+\) has a right derivative \(J_+'(t)\), for all \(t\), such that \(J_+(t) = \int_0^t J_+'(s) \,ds\) and \(J_+'\) is a non-decreasing function of \(t\). So write,
    \begin{equation}
        J_+(f(x)-g(x)) = \int_{f(x)}^{g(x)}J_+'(s)(f(x)-s)\,ds = \int_0^{\infty}J_+'(f(x)-s)\mathcal{X}_{\{g\leq s\}}\,ds.
    \end{equation}
    Integrating over \(\mathbb{R}^n\), using Fubini's theorem and applying Lemma 1.3 gives,
    \begin{equation}
        \begin{split}
            \int_{\mathbb{R}^n} J_+(f(x)-g(x)) d\mu(x) & \geq \int_0^{\infty} \int_{\mathbb{R}^n} (J_+')^*(f(x)-s)\mathcal{X}_{\{g^*\leq s\}}d\mu(x) \,ds\\
            & = \int_{\mathbb{R}^n} \int_{f^*(x)}^{g^*(x)} J_+'(s)(f(x)-s)\,ds d\mu(x)\\
            & = \int_{\mathbb{R}^n} J_+(f^*(x)-g^*(x)) d\mu(x).
        \end{split}
    \end{equation}
    Repeat for \(J_-\), to get,
    \begin{equation}
        \begin{split}
            J_-(f(x)-g(x)) & = \int_{f(x)}^{g(x)}J_-'(s)(s-g(x))\,ds\\
            & = \int_0^{\infty}J_+'(s-g(x))\mathcal{X}_{\{f\leq s\}}\,ds
        \end{split}
    \end{equation}
    and
    \begin{equation}
        \int_{\mathbb{R}^n} J_-(f(x)-g(x)) d\mu(x) \geq \int_{\mathbb{R}^n} J_-(f^*(x)-g^*(x))d\mu(x).
    \end{equation}
    Summing the results we get,
    \begin{equation}
        \begin{split}
            \int_{\mathbb{R}^n} J(f(x)-g(x)) d\mu(x) & \geq \int_{\mathbb{R}^n} J_+(f^*(x)-g^*(x)) + J_-(f^*(x)-g^*(x)) d\mu(x)\\
            & = \int_{\mathbb{R}^n} J(f^*(x)-g^*(x)) d\mu(x).
        \end{split}
    \end{equation}
\end{proof}

Clearly if we take \(J(x)=|x|^p\), we attain the previous theorem.

\begin{defn}[Convolution]
    Define the convolution of two integrable functions, \(f,g\) by,
\end{defn}

\begin{equation}
    \left( f*g \right)(x) := \int _{-\infty}^{\infty} f(x-y)g(y) \,dy.
\end{equation}

\begin{thm}[Riesz' Rearrangement Inequality]\label{RRT}
    For measurable, non-negative functions, \(f,g,h:\mathbb{R}^n \rightarrow \mathbb{R}\), the functional,
\end{thm}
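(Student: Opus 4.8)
The plan is to follow the classical three–step reduction: first reduce the Riesz inequality $I(f,g,h)\le I(f^*,g^*,h^*)$ (where $I(f,g,h)=\iint f(x)g(x-y)h(y)\,dx\,dy$) to the case of indicator functions, then prove it in dimension one, and finally bootstrap to $\mathbb{R}^n$ by Steiner symmetrisation. For the first step I would write each of $f,g,h$ by its layer–cake decomposition and expand the triple integral; since all integrands are non‑negative, Tonelli's theorem lets us pull the $r,s,t$ integrations outside, giving
\[ I(f,g,h)=\int_0^\infty\!\!\int_0^\infty\!\!\int_0^\infty I\bigl(\mathcal{X}_{\{f>r\}},\,\mathcal{X}_{\{g>s\}},\,\mathcal{X}_{\{h>t\}}\bigr)\,dr\,ds\,dt, \]
and the identical formula holds for $f^*,g^*,h^*$ because $\{f^*>r\}=\{f>r\}^*$ by Theorem~\ref{main_thm}. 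Hence it suffices to prove $I(\mathcal{X}_A,\mathcal{X}_B,\mathcal{X}_C)\le I(\mathcal{X}_{A^*},\mathcal{X}_{B^*},\mathcal{X}_{C^*})$ for measurable sets of finite measure; intersecting with a large ball $B_R$ and letting $R\to\infty$ (monotone convergence, using that $(A\cap B_R)^*\nearrow A^*$) further reduces to the case where $A,B,C$ are bounded.

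Next I would treat the one–dimensional inequality $I_1(\mathcal{X}_A,\mathcal{X}_B,\mathcal{X}_C)\le I_1(\mathcal{X}_{A^*},\mathcal{X}_{B^*},\mathcal{X}_{C^*})$. The functional $I_1$ is $L^1$–continuous in each argument, since replacing $A$ by $A'$ changes it by at most $|A\triangle A'|\cdot\min(|B|,|C|)$, and likewise in $B$ and $C$; so, approximating $A,B,C$ in measure by finite unions of open intervals, it is enough to handle finite unions of intervals. For such sets $I_1$ is a piecewise–linear function of the positions of the component intervals, and one argues by induction on the total number of intervals that one may always translate an interval toward the origin — eventually merging overlapping ones — without decreasing $I_1$, the terminal configuration being a single centred interval for each of $A,B,C$, which is precisely $(A^*,B^*,C^*)$.

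With the one–dimensional case available, the $n$–dimensional step is short. Fix a unit vector $e=e_n$, write $x=(x',x_n)$, and let $S_e$ denote Steiner symmetrisation in the $x_n$–direction, which replaces each fibre $\{t:(x',t)\in A\}$ by the centred interval of the same length; then $|S_eA|=|A|$ by Tonelli and $S_e$ leaves centred balls unchanged. Slicing the triple integral perpendicular to $e$ gives
\[ I(\mathcal{X}_A,\mathcal{X}_B,\mathcal{X}_C)=\int_{\mathbb{R}^{n-1}}\!\!\int_{\mathbb{R}^{n-1}} I_1\bigl(\mathcal{X}_{A_{x'}},\,\mathcal{X}_{B_{x'-y'}},\,\mathcal{X}_{C_{y'}}\bigr)\,dx'\,dy', \]
and applying the one–dimensional Riesz inequality to each inner factor — noting that the rearranged fibre $(A_{x'})^*$ is exactly the fibre $(S_eA)_{x'}$ — yields $I(\mathcal{X}_A,\mathcal{X}_B,\mathcal{X}_C)\le I(\mathcal{X}_{S_eA},\mathcal{X}_{S_eB},\mathcal{X}_{S_eC})$. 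I would then invoke the standard fact that there is a fixed sequence of directions along which iterated Steiner symmetrisations of any bounded set converge in $L^1$ to its symmetric rearrangement, apply this sequence simultaneously to $A,B,C$, and pass to the limit using the $L^1$–continuity of $I$; monotonicity at each step gives $I(\mathcal{X}_A,\mathcal{X}_B,\mathcal{X}_C)\le I(\mathcal{X}_{A^*},\mathcal{X}_{B^*},\mathcal{X}_{C^*})$, completing the proof.

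The main obstacle is the one–dimensional case: the reduction to indicators is bookkeeping with Tonelli's theorem and Theorem~\ref{main_thm}, the Steiner slicing is a one–line consequence of the $1$–D inequality, and the convergence of iterated Steiner symmetrisation to a ball can be quoted, but the interval "sliding" argument on $\mathbb{R}$ requires a genuine, if elementary, case analysis to make fully rigorous. A conceptually cleaner alternative for that step is a continuous symmetrisation flow $A\mapsto A^\tau$ with $A^0=A$ and $A^\tau\to A^*$ as $\tau\to\infty$, for which one shows $\frac{d}{d\tau}I_1(\mathcal{X}_{A^\tau},\mathcal{X}_{B^\tau},\mathcal{X}_{C^\tau})\ge 0$; this trades the combinatorics of intervals for a differential inequality.
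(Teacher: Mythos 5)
You should first note a mismatch of genre: the paper does \emph{not} prove Theorem~\ref{RRT} at all --- it states the functional $I(f,g,h)=\int_{\mathbb{R}^n} f\,(g*h)$, asserts the inequality, and defers to Lieb and Loss \cite{analysis}, remarking that they give ``a compactness argument and a symmetry argument.'' Your outline supplies a proof where the paper supplies a citation, so there is no internal proof to compare against; what you propose is, in effect, the first of those two referenced proofs, the Steiner-symmetrisation ``compactness'' route. The skeleton is the standard one and is sound: layer-cake plus Tonelli to reduce to indicators (using $\{f^*>r\}=\{f>r\}^*$ from Theorem~\ref{main_thm}), truncation by balls with monotone convergence to get to bounded sets, the one-dimensional Riesz--Sobolev inequality on fibres to show Steiner symmetrisation in a direction $e$ cannot decrease the functional, and a quoted convergence theorem for iterated Steiner symmetrisations to pass to the ball.

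Two places deserve a flag so the outline does not read as more finished than it is. First, the one-dimensional step as you sketch it --- ``translate an interval toward the origin \ldots without decreasing $I_1$'' --- is more delicate than the phrasing suggests: moving a single component interval of $A$ alone toward the origin, keeping $B$ and $C$ fixed, is \emph{not} in general monotone for the Riesz--Sobolev functional, and the classical interval arguments (Riesz's original, or the version in Hardy--Littlewood--P\'olya) move pieces of the three sets in a coordinated way, or use two-point/reflection rearrangements rather than naive sliding. Your suggested alternative, a continuous symmetrisation flow with $\frac{d}{d\tau}I_1\ge 0$, is exactly the cleaner way to avoid the combinatorics, and is closer to what Lieb and Loss actually do. Second, the statement that there is a fixed sequence of directions along which iterated Steiner symmetrisations of \emph{every} bounded set converge in $L^1$ to the ball is itself a nontrivial theorem; it is fine to quote, but it carries real content (Lieb--Loss side-step it with their ``competing symmetries'' device, alternating Steiner symmetrisation with a fixed rotation, which is the second proof the paper alludes to). With those two points made explicit, the proposal is a correct and reasonably complete roadmap.
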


\begin{equation}
    I(f,g,h) := \int _{\mathbb{R}^n} f(x) g*h(x) d\mu(x),
\end{equation}
can only increase under symmetric decreasing rearrangement. That is, 
\begin{equation}
    I(f,g,h) \leq I(f^*,g^*,h^*).
\end{equation}
This is proven in Lieb and Loss \cite{analysis}, in two ways, using a compactness argument and a symmetry argument.

This states that the convolution of two functions is dominated by the convolution of their rearrangements, in the sense,

\begin{equation}
    \int _B (g*h)^* (x) d\mu(x) = \sup_{C:\text{Vol}(C)=\text{Vol}(B)} \int_C g*h(x)d\mu(x) \leq \int _B g^* * h^* (x) d\mu(x),
\end{equation}
for any centred ball \(B\). Some applications only require a simpler case with \(h(x) = H(|x|)\), such as the Coulomb kernel \(|x-y|^{-1}\) on \(\mathbb{R}^3\) or the heat kernel \((4\pi t)^{-n/2} e^{-\frac{|x-y|^2}{4t}}\), which reduces to the Hardy-Littlewood inequality in the limit \(t\rightarrow 0\).

\subsection{Area, Co-area Formulas and Perimeter}

With the theory of the Hausdorff measure discussed earlier, we can use this to proof the co-area formula. This splits an \(n\)-dimensional integral into an \((n-1)\)-dimensional integral over level sets and a \(1\)-dimensional integral which parameterise the level sets. The simplest example of this is using polar coordinates and writing the \(3\)-dimensional volume integral of a 2-sphere as the product of a 2-dimensional surface integral for a concentric infinitesimally thin shell and a 1-dimensional integral over all the possible radii of the sphere.

This gives a formal definition of the perimeter of an \(n\)-dimensional object and we can use this powerful formula to prove the isoperimetric and P\'{o}lya-Szeg\H{o} inequalities in \(\mathbb{R}^n\).

First recall some linear algebra, specifically the Jacobian of a linear operator. Given a linear operator, \(T:\mathbb{R}^n\rightarrow \mathbb{R}^m\), where \(m\geq n\), the \(n\)-dimensional Jacobian of \(T\) is defined as,
\begin{equation}
    \mathcal{J}_nT = \sqrt{\text{det}(T^*\circ T)},
\end{equation}
where \(T^*:\mathbb{R}^m\rightarrow \mathbb{R}^n\) is the adjoint map of \(T\). 

Now if we have a Lipschitz function \(f:\mathbb{R}^n\rightarrow \mathbb{R}^m\), then if \(f\) is differentiable at \(x\), denote the differential at \(x\) by \(df_x\), which is a linear operator from \(\mathbb{R}^n\) to \(\mathbb{R}^m\).

\begin{thm}[Area Formula]
    Let \(f:\mathbb{R}^n\rightarrow \mathbb{R}^m\) be a Lipschitz continuous function, or \(f\in W^{1,\infty}(\mathbb{R}^n)\). Then, for any Borel subset \(A\subset \mathbb{R}^n\), the function,
    \begin{equation}
        \mathbb{R}^m \ni y \mapsto \mathcal{H}^0(A\cap f^{-1}(y)),
    \end{equation}
    is \(\mathcal{H}^n\)-measurable. Then,
    \begin{equation}
        \int_{\mathbb{R}^m} \mathcal{H}^0(A\cap f^{-1}(y)) d\mathcal{H}^n(y) = \int_A \mathcal{J}_n df_x dx.
    \end{equation}
\end{thm}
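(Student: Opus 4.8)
The plan is to reduce the statement to the elementary linear-algebra identity $\mathcal{H}^n(T(E)) = \mathcal{J}_n T \cdot \mathcal{L}^n(E)$, valid for an injective linear map $T:\mathbb{R}^n\rightarrow\mathbb{R}^m$ and any Borel $E$, by chopping the domain into countably many pieces on which $f$ is, up to an arbitrarily small multiplicative error, an affine bi-Lipschitz map. Throughout I will use Rademacher's theorem (quoted above), so that $df_x$ exists for a.e.\ $x$, together with the basic fact that a Lipschitz map $g$ with constant $K$ satisfies $\mathcal{H}^n(g(S)) \le K^n\,\mathcal{H}^n(S)$; in particular this kills the contribution of the set of non-differentiability points, since it has $\mathcal{L}^n$-measure zero.

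First I would dispose of two degenerate parts of $A$. Let $Z$ be the measure-zero set where $f$ fails to be differentiable; then $\int_Z \mathcal{J}_n df_x\,dx = 0$ and $\mathcal{H}^n(f(Z)) = 0$, so $Z$ contributes nothing to either side. Next let $C = \{x : \text{rank}\,df_x < n\} = \{x : \mathcal{J}_n df_x = 0\}$ be the critical set; here the right-hand integrand vanishes, and I must show $\mathcal{H}^n(f(A\cap C)) = 0$ so that the left-hand side vanishes too. This follows by a direct covering estimate on small cubes (on each cube $df$ is within $\epsilon$ of a map of rank $<n$, so the image lies in a thin neighbourhood of a lower-dimensional affine plane), or equivalently by factoring $x\mapsto(f(x),\epsilon x):\mathbb{R}^n\rightarrow\mathbb{R}^m\times\mathbb{R}^n$ to make the differential injective, applying the good-set case of the formula, and letting $\epsilon\rightarrow 0$ by dominated convergence.

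On the remaining good set $G = \{x : df_x \text{ exists and is injective}\}$, the key step is a decomposition lemma: fix $t>1$; then $G$ can be written, up to an $\mathcal{L}^n$-null set, as a countable disjoint union of Borel sets $E_k$ together with injective linear maps $T_k$ such that
\begin{equation}
    t^{-1}|T_k(x - y)| \le |f(x) - f(y)| \le t\,|T_k(x - y)|, \qquad t^{-1}\mathcal{J}_n T_k \le \mathcal{J}_n df_x \le t\,\mathcal{J}_n T_k
\end{equation}
for all $x,y\in E_k$. This is proved by covering $G$ using a countable dense family of candidate linear maps together with the Lebesgue points of $x\mapsto df_x$, invoking differentiability to get the two-sided bound on small enough pieces, then refining to disjoint Borel sets. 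Granting this, $f|_{E_k}$ is injective and bi-Lipschitz onto its image, so applying the Lipschitz estimate to $f|_{E_k}$ and to $T_k\circ(f|_{E_k})^{-1}$ and using the linear identity gives
\begin{equation}
    t^{-n}\,\mathcal{J}_n T_k \,\mathcal{L}^n(A \cap E_k) \le \mathcal{H}^n(f(A \cap E_k)) \le t^{n}\,\mathcal{J}_n T_k\, \mathcal{L}^n(A \cap E_k),
\end{equation}
which, combined with the Jacobian bound, places $\mathcal{H}^n(f(A\cap E_k))$ within a factor $t^{2n}$ of $\int_{A\cap E_k}\mathcal{J}_n df_x\,dx$. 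Since $f$ is injective on $E_k$, we have $\mathcal{H}^0(A\cap E_k\cap f^{-1}(y)) = \mathcal{X}_{f(A\cap E_k)}(y)$, hence $\mathcal{H}^n(f(A\cap E_k)) = \int_{\mathbb{R}^m}\mathcal{H}^0(A\cap E_k\cap f^{-1}(y))\,d\mathcal{H}^n(y)$; summing over $k$ (monotone convergence on the left, countable additivity of the multiplicity on the right, and noting that the $Z$, $C$ and null parts carry zero multiplicity for $\mathcal{H}^n$-a.e.\ $y$) and finally letting $t\downarrow 1$ yields the identity. The $\mathcal{H}^n$-measurability of $y\mapsto\mathcal{H}^0(A\cap f^{-1}(y))$ falls out of the same decomposition, being the monotone limit of the sums $\sum_k \mathcal{X}_{f(A\cap E_k)}(y)$ of measurable functions.

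The main obstacle is the decomposition lemma of the third step — making precise, with Borel pieces and uniform two-sided bounds, the heuristic that a differentiable map is locally close to its linear part; everything else is bookkeeping or a direct appeal to Rademacher's theorem and to the behaviour of Hausdorff measure under Lipschitz maps. A secondary technical point is the linear identity $\mathcal{H}^n(T(E)) = \mathcal{J}_n T\cdot\mathcal{L}^n(E)$ itself, which one obtains from the polar decomposition $T = O\circ S$ with $O$ an isometry onto an $n$-plane and $S$ symmetric positive-definite, together with the fact established earlier that $\mathcal{H}^n$ and $\mathcal{L}^n$ agree on $\mathbb{R}^n$.
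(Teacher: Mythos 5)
The paper does not actually prove the Area Formula: it states the theorem, remarks that the set where $\mathcal{J}_n$ vanishes or $f$ is non-differentiable has null image (a Sard-type remark), and defers the result to standard references. There is therefore no in-paper proof to compare against. What you have written is the standard Evans--Gariepy/Federer argument, and in outline it is correct: Rademacher handles the non-differentiability set; the critical set is killed by the auxiliary-map trick $x\mapsto(f(x),\epsilon x)$ (or by the direct covering estimate, with the caveat that the cube argument needs some care for merely Lipschitz $f$, which is exactly why the factoring trick is cleaner here); the good set is handled by the Lipschitz linearization lemma, the linear-algebra identity $\mathcal{H}^n(T(E)) = \mathcal{J}_n T\cdot\mathcal{L}^n(E)$ via polar decomposition, and a monotone-convergence bookkeeping pass in $k$; and letting $t\downarrow 1$ closes the gap. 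Measurability of the multiplicity function indeed drops out as the increasing limit of $\sum_k\mathcal{X}_{f(A\cap E_k)}$.

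One small internal inconsistency worth fixing: the Jacobian comparability in your decomposition lemma is stated as $t^{-1}\mathcal{J}_nT_k\le\mathcal{J}_n df_x\le t\,\mathcal{J}_nT_k$, but the bi-Lipschitz condition you derive it from naturally gives $t^{-n}\mathcal{J}_nT_k\le\mathcal{J}_n df_x\le t^{n}\mathcal{J}_nT_k$ (a determinant of an $n\times n$ object picks up $n$ powers of $t$), and your own subsequent claim that $\mathcal{H}^n(f(A\cap E_k))$ is within a factor $t^{2n}$ of the Jacobian integral presupposes the exponent-$n$ version. Either exponent lets $t\downarrow 1$ finish the proof, but as written the two displayed lines disagree. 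Beyond that, the main burden you correctly identify — proving the decomposition lemma with genuinely disjoint Borel pieces and uniform two-sided bounds — is only sketched, but the sketch points at the right construction (a countable dense family of linear maps plus Lebesgue-point/approximate-continuity of $x\mapsto df_x$).
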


If the rank of \(df_x < n\) or \(f\) is not differentiable at \(x\), then \(\mathcal{J}_n\) vanishes. If \(E\) is a set of these such \(x\), then \(\mathcal{H}^n(f(E))=0\). This can be rephrased as the set of critical points of \(f\) has measure zero in \(\mathbb{R}^m\), which is a version of Sard's theorem and discussed in the manifolds context later.

A simple case of the area formula is when \(m=1\) is just regular polar coordinates,

\begin{thm}[Polar Coordinates]
    Let \(f: \mathbb{R}^n \rightarrow \mathbb{R}\) be as before. For each \(x_0 \in \mathbb{R}^n\),
    \begin{equation}
        \int _{\mathbb{R}^n} f(x)\,dx = \int _0^{\infty} \bigg( \int _{\partial B_r (x_0)} f(x) \,dS(x) \bigg) \,dr,
    \end{equation}
    where \(S\) denotes the surface measure on the boundary of  \(B_r(x_0)\).
\end{thm}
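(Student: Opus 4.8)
The plan is to obtain the right-hand side as an instance of the (weighted) area formula under the spherical-coordinate parametrization, so that nothing beyond the Area Formula already in hand is needed; at the end I will note that the co-area formula the paper develops next gives a one-line alternative. Fix $x_0$ and pick a coordinate patch $\sigma:U\to S^{n-1}$, $U\subset\mathbb{R}^{n-1}$ open, and set $\Phi:(0,\infty)\times U\to\mathbb{R}^n$, $\Phi(r,\theta)=x_0+r\,\sigma(\theta)$. Then $\Phi$ is smooth and injective, and $\partial_r\Phi=\sigma(\theta)$ is a unit vector orthogonal to each $\partial_{\theta_i}\Phi=r\,\partial_{\theta_i}\sigma$; hence the Gram matrix $(d\Phi)^{*}d\Phi$ is block-diagonal and
\[
\mathcal{J}_n d\Phi_{(r,\theta)}\;=\;r^{n-1}\sqrt{\det g_\sigma(\theta)},
\]
where $g_\sigma$ is the metric induced by $\sigma$. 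By definition of the surface measure, $r^{n-1}\sqrt{\det g_\sigma(\theta)}\,d\theta$ is exactly $dS$ on $\partial B_r(x_0)$ read in the chart $\theta\mapsto x_0+r\sigma(\theta)$.

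First I would apply the Area Formula to $\Phi$ (source and target both $n$-dimensional, so $m\ge n$ holds with equality), using that $\Phi$ is smooth hence locally Lipschitz and exhausting $(0,\infty)\times U$ by bounded pieces. Since $\Phi$ is injective, $\mathcal{H}^0(\Phi^{-1}(y))=\mathcal{X}_{\Phi(\mathrm{dom})}(y)$, and the standard upgrade from characteristic functions to general non-negative $f$ (layer-cake decomposition and Tonelli, just as in the $L^p$ lemmas above, together with $\mathcal{H}^n=\mathcal{L}^n$ on $\mathbb{R}^n$) yields
\[
\int_{\Phi(\mathrm{dom})} f(y)\,dy\;=\;\int_0^{\infty}\!\!\int_U f\bigl(x_0+r\,\sigma(\theta)\bigr)\,r^{n-1}\sqrt{\det g_\sigma(\theta)}\;d\theta\,dr .
\]
By Tonelli and the identification above, the inner integral is $\int_{\sigma(U)}f\,dS$ on the radius-$r$ sphere. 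Then I would cover $S^{n-1}$ by finitely many such patches up to an $\mathcal{H}^{n-1}$-null set, pass to a measurable partition subordinate to them, and sum: on the left the images reassemble to $\mathbb{R}^n$ minus $\{x_0\}$ and minus the cone over the exceptional null set, both $\mathcal{L}^n$-null, while on the right the pieces reassemble to $\int_{\partial B_r(x_0)}f\,dS$ for a.e. $r$, giving the claimed identity.

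The main obstacle is bookkeeping rather than a substantive estimate: one must check that the point $x_0$, where $\Phi$ degenerates, is negligible on both sides; that the chart-overlap null set on $S^{n-1}$ lifts to an $\mathcal{L}^n$-null subset of $\mathbb{R}^n$ and to an $\mathcal{H}^{n-1}$-null subset of each sphere; and that $r\mapsto\int_{\partial B_r(x_0)}f\,dS$ is measurable, which is precisely the measurability clause in the Area Formula, combined with the fact that $\mathcal{H}^{n-1}$ on a $C^1$ hypersurface is the classical surface measure (itself a consequence of $\mathcal{H}^{n-1}=\mathcal{L}^{n-1}$ on flat pieces plus the area formula for the sphere's charts). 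If instead the co-area formula is granted, the whole argument collapses to: apply it to $u(x)=|x-x_0|$, observe $|\nabla u|\equiv 1$ away from $x_0$, and use $u^{-1}(r)=\partial B_r(x_0)$ with $\mathcal{H}^{n-1}\big|_{\partial B_r(x_0)}=dS$.
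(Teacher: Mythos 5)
Your proposal is correct; note that the paper itself supplies no proof of this statement — it merely labels it a ``simple case of the area formula'' and moves on — so you are filling a genuine gap. Your Jacobian computation is right: the Gram matrix of $d\Phi$ for $\Phi(r,\theta)=x_0+r\sigma(\theta)$ is block-diagonal (using $\sigma\cdot\partial_{\theta_i}\sigma=0$), giving $\mathcal{J}_n d\Phi=r^{n-1}\sqrt{\det g_\sigma}$, which you correctly match with the surface element on $\partial B_r(x_0)$, and the patching/null-set bookkeeping you flag is exactly what has to be checked. The one subtlety worth keeping explicit is that the Area Formula as stated in the paper involves $\mathcal{H}^0(A\cap f^{-1}(y))$ — i.e.\ indicator functions of Borel sets $A$ — so the ``standard upgrade'' to a general non-negative integrand really is a separate step (monotone approximation by simple functions plus Tonelli), not an instance of the formula as literally written; you signal this, which is good. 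Your one-line co-area alternative with $u(x)=|x-x_0|$, $|\nabla u|\equiv 1$, $u^{-1}(r)=\partial B_r(x_0)$ is the route the paper actually seems to intend (its ``$m=1$'' remark refers to the scalar-valued co-area case discussed just below), and it is cleaner if the co-area formula is taken as given; your parametrization-plus-area-formula argument has the advantage of relying only on material the paper has established at that point in the text.
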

It also produces the classical formula for the length of a parameterized curve, \(\phi(t):\mathbb{R}\rightarrow \mathbb{R}^n\), when \(\phi\) is injective,
\begin{equation}
    \mathcal{H}^1(\phi(A)) = \int_A |\phi'(t)| dt.
\end{equation}

Now consider the co-area formula. 

\begin{theorem}[Co-Area Formula]
    Let \(f:\mathbb{R}^n\rightarrow \mathbb{R}^m\) be a Lipschitz continuous function, or \(f\in W^{1,\infty}(\mathbb{R}^n)\), and \(g:\mathbb{R}^n\rightarrow [0,\infty]\) a Borel function. Then,
    \begin{equation}
        \mathbb{R}^m \ni y \mapsto \int_{\{f = y\}} g(x) d\mathcal{H}^0(x),
    \end{equation}
    is a \(\mathcal{H}^n\)- measurable function and the co-area formula states,
    \begin{equation}
        \int_{\mathbb{R}^m} \bigg(\int_{\{f=y\}} g(x) d\mathcal{H}^0(x) \bigg) d\mathcal{H}^n(x) = \int_{\mathbb{R}^n} g(x) \mathcal{J}_ndf_x dx.
    \end{equation}
\end{theorem}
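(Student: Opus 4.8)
The plan is to follow the classical route (as in Evans--Gariepy or Federer): reduce to indicator functions, establish the identity for linear maps by elementary linear algebra, and then bootstrap to a general Lipschitz $f$ by approximating it locally by affine maps. Throughout we use the fact, recorded in Section~\ref{sec:pre}, that $\mathcal{H}^n = \mathcal{L}^n$ on $\mathbb{R}^n$, and we write $k = n-m$ (assuming $n \geq m$, so that the relevant Jacobian is $\mathcal{J}_m df_x = \sqrt{\det(df_x \circ df_x^{*})}$, the adjoint being taken as in the area formula above).

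\textbf{Step 1 (reduction to sets).} By the layer-cake decomposition and monotone convergence it suffices to prove
\begin{equation}
    \int_{\mathbb{R}^m} \mathcal{H}^{k}\!\left(A \cap f^{-1}(y)\right) dy = \int_A \mathcal{J}_m df_x \, dx
\end{equation}
for every bounded Borel set $A \subset \mathbb{R}^n$; the $\mathcal{H}^n$-measurability of $y \mapsto \mathcal{H}^k(A \cap f^{-1}(y))$ will fall out of the construction, and the case of general Borel $g \geq 0$ then follows by approximating $g$ from below by simple functions and applying monotone convergence once more.

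\textbf{Step 2 (linear maps).} Suppose first $f = L$ is linear. If $\operatorname{rank} L < m$ then $\mathcal{J}_m L = 0$, while $L^{-1}(y) = \emptyset$ for every $y \notin \operatorname{im} L$ and $\mathcal{L}^m(\operatorname{im} L) = 0$, so both sides vanish. If $\operatorname{rank} L = m$, split $\mathbb{R}^n = (\ker L)^{\perp} \oplus \ker L$ orthogonally; then $L$ restricts to a linear isomorphism $(\ker L)^\perp \to \mathbb{R}^m$ whose Jacobian is exactly $\mathcal{J}_m L$. Fubini's theorem along this splitting, combined with the linear change of variables $y = Lz$ on $(\ker L)^\perp$, gives the identity, since each fibre $L^{-1}(y)$ is a translate of $\ker L$ and hence has the correct $\mathcal{H}^k$-measure.

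\textbf{Step 3 (Lipschitz maps), and the main obstacle.} By Rademacher's theorem $f$ is differentiable almost everywhere, and one shows that, up to a set of Lebesgue measure zero, $\mathbb{R}^n$ decomposes into countably many bounded Borel pieces $E_j$ on each of which $f$ agrees (after composing with a suitable fixed linear map) with a bi-Lipschitz map of distortion at most $1+\varepsilon$, and on which $df_x$ stays uniformly close to a fixed linear map $L_j$. Applying Step~2 to each $L_j$, using the bi-Lipschitz control to compare $\mathcal{H}^k$ on the fibres $f^{-1}(y)\cap E_j$ with $\mathcal{H}^k$ on the affine fibres of $L_j$, summing over $j$, and letting $\varepsilon \to 0$ yields the formula on $\{\mathcal{J}_m df_x > 0\}$. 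What remains — and this is the genuinely hard step — is to show that the critical set $Z = \{x : \operatorname{rank} df_x < m\} \cup \{x : f \text{ not differentiable at } x\}$ contributes nothing: it contributes nothing to the right-hand side since $\mathcal{J}_m df_x = 0$ there, and one must verify by a covering argument (run with the pre-measures $\mathcal{H}^k_\delta$, the normalisation constants $\omega_k/2^k$, and the Lipschitz bound $\operatorname{diam} f(C) \leq K\operatorname{diam} C$, closing the estimate via $\mathcal{H}^n = \mathcal{L}^n$) that $\int_{\mathbb{R}^m}^{*} \mathcal{H}^k\!\left(Z \cap f^{-1}(y)\right) dy = 0$, so $Z$ contributes nothing to the left-hand side either. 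The linear case and the reduction in Steps~1--2 are routine; the difficulty is entirely in Step~3 — making the local affine approximation quantitative, tracking how bi-Lipschitz distortion affects the $k$-dimensional Hausdorff measure of individual fibres, and, above all, the Sard-type estimate on the rank-deficient set, which is a statement about the Hausdorff measure of fibres rather than merely about the image $f(Z)$. Measurability of the fibre integral is obtained along the way from the countable decomposition into the pieces $E_j$.
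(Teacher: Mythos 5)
The paper states this theorem without proof; the closest thing to a proof in the paper is the Section~\ref{sec:co_area} argument for the Riemannian co-area formula, but that argument pulls back volume forms through a \(C^r\) map with \(r > m-n\) and invokes the smooth Sard theorem, so it fundamentally cannot be adapted to a merely Lipschitz \(f\), which is only a.e.\ differentiable. Your outline instead follows the standard Evans--Gariepy/Federer route, which is the right one here, and you also correctly repair the typos in the paper's statement: the inner integral should be against \(d\mathcal{H}^{n-m}\) rather than \(d\mathcal{H}^{0}\), the outer against \(d\mathcal{H}^{m}(y)\), and the relevant Jacobian is \(\mathcal{J}_m df_x = \sqrt{\det(df_x\circ df_x^{*})}\) with \(n\geq m\), not the \(\mathcal{J}_n\) of the area formula. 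Steps 1 and 2 (reduction to indicator functions, and the linear case via the orthogonal splitting \((\ker L)^{\perp}\oplus\ker L\) and Fubini) are sound.

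The gap is Step 3, which you flag but do not prove, and which is where essentially all of the content of the theorem lives. Three separate claims are asserted there without argument: the countable Lipschitz linearisation of \(\{\mathcal{J}_m df_x > 0\}\) into pieces \(E_j\) on which \(f\) is a \((1+\varepsilon)\)-bi-Lipschitz perturbation of a fixed linear map \(L_j\); the quantitative comparison of \(\mathcal{H}^{n-m}\) on the curved fibres \(f^{-1}(y)\cap E_j\) with \(\mathcal{H}^{n-m}\) on the affine fibres of \(L_j\); and the Morse--Sard--Federer estimate
\[
\int_{\mathbb{R}^m}^{*}\mathcal{H}^{n-m}\bigl(Z\cap f^{-1}(y)\bigr)\,dy = 0
\]
for the critical set \(Z\). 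You correctly observe that this last estimate is strictly stronger than \(\mathcal{L}^m(f(Z))=0\) --- it controls the \((n-m)\)-dimensional measure of every critical slice, not just the measure of the image --- and for Lipschitz maps it requires a genuine covering argument with the pre-measures \(\mathcal{H}^{n-m}_{\delta}\), the Lipschitz diameter bound, and Fatou's lemma. As written, then, the proposal is a correct and well-informed roadmap that reduces the theorem to its standard hard lemma without discharging that lemma; it is not a proof.
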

For now we consider \(f\) when \(m=1\) and use the parameter \(t\). In this case it is convenient to write \(\mathcal{J}_ndf_x\) as \(\nabla f\). Then the co-area formula reads as,
\begin{equation}
    \int _{\mathbb{R}^n} g(x) |\nabla f(x)| \,dx = \int _0^{\infty} \int _{\{f=t\}} g(x) \,d\mathcal{H}^{n-1} \,dt,
\end{equation}
where the level set \(\{x\in \mathbb{R}^n: f(x)=t\}\) is a smooth, \((n-1)\)-dimensional hypersurface in \(\mathbb{R}^n\).

\begin{defn}[Perimeter of Super-level Sets]
    Define the perimeter of a super level set \(\{f>t\}\) as the surface integral,
\end{defn}

\begin{equation}
    \text{Per}\left( \{f>t\}\right) := \int _{\{f=t\}} \,d\mathcal{H}^{n-1} = \mathcal{H}^{n-1}\big(\{f=t\}\big),
\end{equation}
noting the boundary of the super level set is \(\partial \big(\{f>t\}\big) = \{f=t\}\). The perimeter of a set can also be thought of as the total variation of its characteristic function in an open set containing your set.

Generally for an open subset \(A\subset \mathbb{R}^n\),
\begin{equation}
    \text{Per}(A) := \int_{\partial A} d\mathcal{H}^{n-1} = \mathcal{H}^{n-1}(\partial A).
\end{equation}

Considering different choices of \(g(x)\) can give geometric insights into \(f(x)\). Taking \(g\equiv 1\), we immediately get the following corollary,

\begin{corollary}[\(L^1\)-norm of gradient of \(f\)]
    With \(f\) as above,
\end{corollary}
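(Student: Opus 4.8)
The plan is to read the statement straight off the $m=1$ case of the Co-Area Formula. First I would take $g\equiv 1$, which is certainly a Borel function $\mathbb{R}^n\to[0,\infty]$, and substitute it into the identity
\[
\int_{\mathbb{R}^n} g(x)\,|\nabla f(x)|\,dx = \int_0^{\infty}\int_{\{f=t\}} g(x)\,d\mathcal{H}^{n-1}\,dt ,
\]
so that the left-hand side collapses to $\int_{\mathbb{R}^n}|\nabla f(x)|\,dx = \|\nabla f\|_{L^1(\mathbb{R}^n)}$ while the inner integral on the right becomes $\int_{\{f=t\}} d\mathcal{H}^{n-1} = \mathcal{H}^{n-1}(\{f=t\})$. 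The measurability of $t\mapsto\mathcal{H}^{n-1}(\{f=t\})$ that makes the right-hand integral meaningful is exactly the measurability assertion already contained in the statement of the Co-Area Formula specialised to $g\equiv 1$.

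Next I would invoke the Definition of the perimeter of a super-level set, which records $\mathrm{Per}(\{f>t\}) = \mathcal{H}^{n-1}(\{f=t\})$ together with the observation $\partial\{f>t\} = \{f=t\}$. The one point that deserves a word is that this boundary identification holds for almost every $t$: by the Sard-type remark stated after the Area Formula, for a.e. level $t$ the set $\{f=t\}$ avoids the critical set of $f$ and is a genuine $(n-1)$-dimensional hypersurface bounding $\{f>t\}$, so replacing $\mathcal{H}^{n-1}(\{f=t\})$ by $\mathrm{Per}(\{f>t\})$ under the integral sign changes nothing.

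Combining the two displays yields
\[
\|\nabla f\|_{L^1(\mathbb{R}^n)} = \int_0^{\infty} \mathrm{Per}\big(\{f>t\}\big)\,dt ,
\]
with both sides permitted to equal $+\infty$ simultaneously. I do not expect a genuine obstacle here: the entire content sits inside the Co-Area Formula, and the corollary is just its specialisation at $g\equiv 1$ followed by unwinding the definition of perimeter; the only mild care needed is the almost-everywhere matching of $\{f=t\}$ with $\partial\{f>t\}$, which the critical-set remark in the text already supplies.
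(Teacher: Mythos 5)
Your proposal is correct and takes essentially the same route as the paper: substitute $g\equiv 1$ into the $m=1$ co-area formula, recognise the inner integral as $\mathcal{H}^{n-1}(\{f=t\})$, and invoke the definition of perimeter. The only difference is that you spend a paragraph on the a.e.\ identification $\partial\{f>t\}=\{f=t\}$; this is a sound point, but note the paper sidesteps it entirely because it \emph{defines} $\mathrm{Per}(\{f>t\}):=\mathcal{H}^{n-1}(\{f=t\})$, so the corollary is a pure specialisation of the co-area formula with no boundary matching to worry about.
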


\begin{equation}
    || \nabla f||_1 = \int _0 ^{\infty} \text{Per}\left( \{f>t\}\right).
\end{equation}

When \(f\) is smooth and where \(\nabla f\) does not vanish, the co-area formula defines a local change of coordinates. Assuming only that \(f\) is smooth, taking \(g = |\nabla f|^{-1}\) gives,

\begin{lemma}\label{crit_pts}
    For any interval \((t_1,t_2] \subset \mathbb{R}\),
\end{lemma}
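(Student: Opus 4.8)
The plan is to obtain the identity by feeding the co-area formula (in its \(m=1\) form recalled just above) the function \(g\) that "undoes" the Jacobian factor, cut off to the strip in question. Concretely, set \(R := f^{-1}((t_1,t_2]) \cap \{\nabla f \neq 0\}\) and define \(g := |\nabla f|^{-1}\) on \(R\) and \(g := 0\) off \(R\). Since \(f\) is smooth, \(\nabla f\) is continuous and \(|\nabla f|^{-1}\) is continuous (hence Borel) on the open set \(\{\nabla f \neq 0\}\), while \(f^{-1}((t_1,t_2])\) is Borel; so \(g\) is a nonnegative Borel function and a legitimate input to the co-area formula. A smooth \(f\) is only locally Lipschitz, so strictly speaking I would apply the co-area formula on each ball \(B_k(0)\) (on which \(f\) is Lipschitz), then pass \(k\to\infty\) by monotone convergence, or simply invoke the locally-Lipschitz version.

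With this choice the left-hand side of the co-area identity is \(\int_{\mathbb{R}^n} g\,|\nabla f|\,dx\). By construction \(g(x)|\nabla f(x)| = \mathcal{X}_R(x)\) at every point — the convention \(g=0\) on \(\{\nabla f = 0\}\) removes any \(0\cdot\infty\) ambiguity — so this integral equals \(\text{Vol}(R)\). Decomposing the strip as \(f^{-1}((t_1,t_2]) = R \sqcup \big(f^{-1}((t_1,t_2]) \cap \{\nabla f = 0\}\big)\) gives \(\text{Vol}\big(f^{-1}((t_1,t_2])\big) = \text{Vol}(R) + \text{Vol}\big(f^{-1}((t_1,t_2]) \cap \{\nabla f = 0\}\big)\), which is how the critical-point term enters the final formula.

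For the right-hand side, \(g\) vanishes outside \(f^{-1}((t_1,t_2])\), so the outer integral collapses to \(\int_{t_1}^{t_2}\); the co-area theorem itself supplies the \(\mathcal{H}^1\)-measurability of \(t \mapsto \int_{\{f=t\}} g\,d\mathcal{H}^{n-1}\). By Sard's theorem the set of critical values of \(f\) is Lebesgue-null, so for a.e.\ \(t \in (t_1,t_2)\) the level set \(\{f=t\}\) is a smooth embedded hypersurface on which \(|\nabla f| > 0\), and on such level sets \(g \equiv |\nabla f|^{-1}\), so the inner integral is the genuine surface integral \(\int_{\{f=t\}} |\nabla f|^{-1}\,d\mathcal{H}^{n-1}\). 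Equating the two sides and rearranging produces the stated identity.

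The one genuinely delicate point is the bookkeeping around the critical set \(\{\nabla f = 0\}\): I must keep the convention \(g = 0\) there consistent on both sides (so that \(g|\nabla f|\) is \emph{exactly} \(\mathcal{X}_R\) and the right-hand integrals are over the regular parts of the level sets), and I must cite Sard's theorem to know that this convention discards only an \(\mathcal{H}^1\)-null set of parameters \(t\) and hence leaves the \(dt\)-integral unchanged. Once that is in place the lemma is a one-line substitution into the co-area formula.
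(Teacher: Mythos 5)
Your proof is correct, but it takes a genuinely different technical route from the paper's. The paper handles the $0\cdot\infty$ issue on the critical set $\{\nabla f = 0\}$ by \emph{regularizing}: it replaces $|\nabla f|^{-1}$ with $(\epsilon + |\nabla f|)^{-1}$, applies the co-area formula to this bounded continuous function, and then lets $\epsilon \to 0^+$, invoking the monotone convergence theorem on both sides; the key observation is that $(\epsilon + |\nabla f|)^{-1}|\nabla f| \nearrow \mathcal{X}_{\{\nabla f \neq 0\}}$ pointwise. You instead \emph{truncate}: you define $g = |\nabla f|^{-1}\mathcal{X}_{R}$ outright, apply co-area once to this Borel function, and then invoke Sard's theorem to argue that for a.e.\ $t$ the level set avoids the critical set, so the inner integral on the right-hand side agrees with the stated one. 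The trade-off is that the paper's route needs only monotone convergence and never mentions Sard, whereas yours leans on Sard but avoids the $\epsilon$-limit entirely. Your version is arguably closer in spirit to the ``one-line substitution into the co-area formula'' the lemma really is, and your remark about smooth $f$ being merely locally Lipschitz (so one should exhaust by balls or cite a locally-Lipschitz co-area theorem) is a genuine technical point that the paper silently omits. Both proofs are sound.
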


\begin{equation}
    \int_{t_1}^{t_2} \int _{\{f=t\}} |\nabla f|^{-1} \,d\mathcal{H}^{n-1} \,dt = \text{Vol} \left( \{x: t_1 < f(x) \leq t_2, |\nabla f(x)| \neq 0\} \right),
\end{equation}
where the right hand side is the volume of all the points that are not critical points of \(f\). 
\begin{proof}
    Write, 
    \begin{equation}
        |\nabla f(x)|^{-1} = \lim_{\epsilon \rightarrow 0_+} \left( \epsilon + |\nabla f(x)|\right)^{-1}.
    \end{equation}
    Since \(\epsilon >0\), the sequence of measurable functions, \(g_{\epsilon} = \epsilon + |\nabla f(x)|\) satisfies, \(0\leq g_1 \leq g_2 \leq \dots\) where \(\lim_{\epsilon \rightarrow 0_+} \left( \epsilon + |\nabla f(x)|\right)^{-1} = |\nabla f(x)|^{-1}\), so by the monotone convergence theorem we can interchange the limit and integral. For any points where \(|\nabla f| \neq 0\), \(\epsilon >0\) ensures the inner integral over \(\sigma\) is now well-defined. Then,
    \begin{equation}
        \int_{t_1}^{t_2} \int _{\{f=t\}} \lim_{\epsilon \rightarrow 0_+} \left( \epsilon + |\nabla f(x)|\right)^{-1} \,d\mathcal{H}^{n-1} \,dt = \lim_{\epsilon \rightarrow 0_+} \int_{t_1}^{t_2} \int _{\{f=t\}} \left( \epsilon + |\nabla f(x)|\right)^{-1} \,d\mathcal{H}^{n-1} \,dt,
    \end{equation}
    by the co-area formula,
    \begin{equation}
        = \lim_{\epsilon \rightarrow 0_+} \int _{f^{-1}(t_1)}^{f^{-1}(t_2)} \left( \epsilon + |\nabla f(x)|\right)^{-1} |\nabla f(x)| \,dx = \int _{f^{-1}(t_1)}^{f^{-1}(t_2)} \mathcal{X}_{\{y:|\nabla f(y)|\neq 0\}} (x) \,dx,
    \end{equation}
    taking the limit inside and by definition of volume we are done.
\end{proof}
Note that symmetrically rearranging \(f\) will decrease the number of critical points, i.e.,
\begin{corollary}[Symmetric Rearrangement of a Function Decrease the Number of Critical Points]\label{decrease_crit_pts}
\end{corollary}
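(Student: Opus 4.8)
Here the natural reading of the corollary is that, for any interval $(t_1,t_2]$, the set of critical points of $f^*$ with value in $(t_1,t_2]$ has no more volume than the corresponding set for $f$: writing $C_f=\{x: t_1<f(x)\le t_2,\ |\nabla f(x)|=0\}$ and $N_f=\{x: t_1<f(x)\le t_2,\ |\nabla f(x)|\ne 0\}$ (and similarly $C_{f^*},N_{f^*}$), the claim is $\text{Vol}(C_{f^*})\le \text{Vol}(C_f)$. The plan is to play the exact volume of the slab off against Lemma \ref{crit_pts}. First I would note that by equimeasurability (the equalities in \ref{dist}, applied at $t_1$ and at $t_2$), $\text{Vol}(\{t_1<f\le t_2\})=\mu_f(t_1)-\mu_f(t_2)=\mu_{f^*}(t_1)-\mu_{f^*}(t_2)=\text{Vol}(\{t_1<f^*\le t_2\})$, so $\text{Vol}(C_f)+\text{Vol}(N_f)=\text{Vol}(C_{f^*})+\text{Vol}(N_{f^*})$ and it suffices to prove $\text{Vol}(N_{f^*})\ge \text{Vol}(N_f)$.

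By Lemma \ref{crit_pts} applied to $f$ (and, modulo the regularity caveat below, to $f^*$), $\text{Vol}(N_f)=\int_{t_1}^{t_2}\int_{\{f=t\}}|\nabla f|^{-1}\,d\mathcal{H}^{n-1}\,dt$ and $\text{Vol}(N_{f^*})=\int_{t_1}^{t_2}\int_{\{f^*=t\}}|\nabla f^*|^{-1}\,d\mathcal{H}^{n-1}\,dt$, so I would reduce everything to the pointwise inequality $\int_{\{f=t\}}|\nabla f|^{-1}\,d\mathcal{H}^{n-1}\le \int_{\{f^*=t\}}|\nabla f^*|^{-1}\,d\mathcal{H}^{n-1}$ for almost every $t$. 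The left side is bounded above: from $\mu_f(s)-\mu_f(t)=\text{Vol}(\{s<f\le t\})\ge \text{Vol}(\{s<f\le t,\ |\nabla f|\ne 0\})=\int_s^t\int_{\{f=\tau\}}|\nabla f|^{-1}\,d\mathcal{H}^{n-1}\,d\tau$, dividing by $t-s$ and letting $s\uparrow t$ gives $\int_{\{f=t\}}|\nabla f|^{-1}\,d\mathcal{H}^{n-1}\le -\mu_f'(t)$ at every point of differentiability of the monotone function $\mu_f$, i.e. a.e. The right side is computed exactly: away from the (countably many) jump levels of $\mu_f$, the set $\{f^*=t\}$ is the sphere of radius $\rho(t)=(\mu_f(t)/\omega_n)^{1/n}$ (using $\mu_{f^*}=\mu_f$), on which the radial function $f^*$ has constant gradient magnitude, and the chain rule together with $\mu_f(t)=\omega_n\rho(t)^n$ makes the powers of $\rho(t)$ cancel, leaving $\int_{\{f^*=t\}}|\nabla f^*|^{-1}\,d\mathcal{H}^{n-1}=-\mu_f'(t)$ at a.e. such $t$. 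Combining the two gives the pointwise inequality, hence $\text{Vol}(N_{f^*})\ge\text{Vol}(N_f)$ and therefore $\text{Vol}(C_{f^*})\le\text{Vol}(C_f)$.

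I expect the main obstacle to be the measure-theoretic bookkeeping hidden in ``for almost every $t$'' and in applying Lemma \ref{crit_pts} to $f^*$, which in general is only lower semicontinuous rather than smooth. Two points need care: (i) at a level $t_0$ where $\mu_f$ jumps, $\{f=t_0\}$ and $\{f^*=t_0\}$ both have positive volume (equal, by the level-set equimeasurability that follows from Theorem \ref{main_thm}), lie entirely in the critical sets, and should be excised from the slab before the co-area argument is run, contributing equally to $C_f$ and $C_{f^*}$; and (ii) verifying that the radial computation genuinely yields $=-\mu_f'(t)$ rather than merely $\le -\mu_f'(t)$ amounts to checking that $f^*$ has non-vanishing gradient on almost every sphere $\{f^*=t\}$, which is where one must either assume $f$ is Lipschitz (so that $f^*$ is Lipschitz and the co-area formula of the previous subsection applies to $f^*$ verbatim) or approximate $f$ by smooth functions and pass to the limit using the $L^1$-continuity of rearrangement established earlier. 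Once the regularity is pinned down, what remains is just Fubini and the calculus of monotone functions, exactly in the spirit of Lemma \ref{crit_pts}.
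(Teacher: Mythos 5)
Your proposal is correct and follows essentially the same route as the paper: exploit the radial monotonicity of $f^*$ to conclude it has no critical points at positive levels, then combine Lemma~\ref{crit_pts} with equimeasurability ($\mu_f=\mu_{f^*}$). The paper's own proof is a two-sentence appeal to exactly these facts; you have simply supplied the quantitative bookkeeping, namely $\int_{\{f=t\}}|\nabla f|^{-1}\,d\mathcal{H}^{n-1}\leq -\mu_f'(t)=-\mu_{f^*}'(t)=\int_{\{f^*=t\}}|\nabla f^*|^{-1}\,d\mathcal{H}^{n-1}$ for a.e.\ $t$, and have flagged the regularity caveats (the a.e.\ qualifier at jump levels of $\mu_f$, and justifying the co-area formula for the merely Lipschitz $f^*$) that the paper leaves implicit.
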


\begin{equation}
    \int _{\{f=t\}} |\nabla f|^{-1} \,d\mathcal{H}^{n-1} \leq \int _{\{f^*=t\}} |\nabla f^*|^{-1} \,d\mathcal{H}^{n-1}.
\end{equation}
\begin{proof}
    Since \(f^*\) is a monotone decreasing, non-negative function, it only has a critical point when \(f(x)=0\), i.e. where \(f^*\) crosses the vertical axis. Hence it follows from Lemma \ref{crit_pts}.
\end{proof}

Now we can state the two main geometric inequalities we are interested in.

\begin{lemma}[Isoperimetric Inequality]
    Given a measurable set of finite volume, \(A\subset \mathbb{R}^n\),
    \begin{equation}
        \text{Per}(A) \geq \text{Per}(A^*).
    \end{equation}
\end{lemma}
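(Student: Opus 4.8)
The plan is to reduce the statement about an arbitrary measurable set $A$ to the sharp classical isoperimetric inequality in $\mathbb{R}^n$, namely that among all sets of a given volume the ball minimizes perimeter. Since $A^*$ is by construction the centred open ball with $\text{Vol}(A^*) = \text{Vol}(A)$, and $\text{Per}(A^*) = \mathcal{H}^{n-1}(\partial A^*)$ is exactly the surface area of that ball, the inequality $\text{Per}(A) \geq \text{Per}(A^*)$ is literally the assertion that the ball is the perimeter-minimizer at fixed volume. So the real content is to supply a proof of this classical fact using the machinery already developed in the paper.

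The route I would take uses the co-area formula together with the $L^p$-rearrangement results. First, if $\text{Per}(A) = \infty$ there is nothing to prove, so assume $A$ has finite perimeter. Approximate $\mathcal{X}_A$ by a sequence of smooth (or Lipschitz) functions $f_k$ with $\|\nabla f_k\|_1 \to \text{Per}(A)$ — this is the sense in which the perimeter is the total variation of $\mathcal{X}_A$, as remarked after the Perimeter definition. Apply the co-area corollary $\|\nabla f_k\|_1 = \int_0^\infty \text{Per}(\{f_k > t\})\,dt$. Now symmetrically rearrange: by Theorem \ref{main_thm} the level sets rearrange to balls, $\{f_k > t\}^* = \{f_k^* > t\}$, and since equimeasurability gives $\text{Vol}(\{f_k > t\}) = \text{Vol}(\{f_k^* > t\})$, the classical isoperimetric inequality applied \emph{levelwise} yields $\text{Per}(\{f_k > t\}) \geq \text{Per}(\{f_k^* > t\})$. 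Integrating in $t$ and using the co-area corollary again for $f_k^*$ gives $\|\nabla f_k\|_1 \geq \|\nabla f_k^*\|_1$. Passing to the limit and identifying $f_k^* \to \mathcal{X}_{A^*}$ (using that rearrangement commutes with the relevant limits, via the non-expansivity/order-preserving results) produces $\text{Per}(A) \geq \text{Per}(A^*)$.

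The main obstacle is the circularity lurking in this approach: the levelwise step invokes the sharp classical isoperimetric inequality, which is the very statement we are trying to prove. To avoid this one genuinely needs an independent input. The cleanest honest option is to cite the classical inequality (or the equivalent statement that $\mathcal{H}^{n-1}$ and $\mathcal{L}^{n-1}$ interact with the isodiametric inequality from Section 2.1) as the single external fact, and present the above argument as showing that the \emph{set-level} statement follows from the \emph{function-level} statement, i.e. that the isoperimetric inequality for all sets and the Pólya–Szegő-type gradient inequality are equivalent — which is precisely the kind of equivalence the paper advertises. I would therefore structure the proof so that the only nontrivial ingredient is the known value of the perimeter of the ball together with a one-line appeal to either the classical isoperimetric inequality or a direct symmetrization (Steiner) argument, and make this dependence explicit rather than hiding it inside the levelwise step.
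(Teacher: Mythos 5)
Your diagnosis of the circularity is exactly right, and it is worth noting that the paper itself attaches \emph{no proof} to this lemma: it is stated bare, with only a remark following, and is then immediately fed into the Pólya--Szeg\H{o} argument as an input. The co-area/rearrangement route you sketch — approximate $\mathcal{X}_A$ by smooth $\phi_\delta$, write $||\nabla \phi_\delta||_1 = \int_0^\infty \text{Per}(\{\phi_\delta > t\})\,dt$, and symmetrise levelwise — is precisely Proof (1) in Section 3.4, and for exactly the reason you identify it cannot serve as the foundational argument: in this paper Pólya--Szeg\H{o} is deduced \emph{from} the isoperimetric inequality, so running the implication in the other direction to establish the isoperimetric inequality is circular within the paper's own logical structure.

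What the paper actually supplies as the non-circular input is the Brunn--Minkowski inequality. The sharp form $\text{Per}(A) \geq n\omega_n^{1/n}\text{Vol}(A)^{(n-1)/n}$ is derived from Brunn--Minkowski via the Minkowski-content limit $\text{Per}(A) = \lim_{\epsilon\to 0}\epsilon^{-1}(\text{Vol}(A+B_\epsilon)-\text{Vol}(A))$; since $A^*$ is the centred ball of the same volume and balls achieve equality, $\text{Per}(A^*) = n\omega_n^{1/n}\text{Vol}(A^*)^{(n-1)/n} = n\omega_n^{1/n}\text{Vol}(A)^{(n-1)/n}$, giving the lemma at once. Proof (2) of Section 3.4 reaches the same conclusion directly from Brunn--Minkowski without sharpening. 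Your fallback — cite the classical result or run a Steiner symmetrisation — is honest, but the route the paper intends is Brunn--Minkowski, which it is content to leave as the one unproven external fact. The mention of the isodiametric inequality is a mild red herring: that inequality compares $\mathcal{L}^n(A)$ to $(\text{diam}\,A)^n$ and is used in Section 2.1 only to identify $\mathcal{H}^n$ with $\mathcal{L}^n$ on $\mathbb{R}^n$; it is not, on its own, a substitute for Brunn--Minkowski in producing the perimeter bound.
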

In 2-dimensions, this means the circle minimises perimeter for a closed shape of fixed area. 

\begin{lemma}[P\'{o}lya-Szeg\H{o} Inequality]
    For a non-negative, measurable function \(f:\Omega \rightarrow \mathbb{R}^n\), where \(\Omega\) is an open subset of \(\mathbb{R}^n\) , with \(f\in L^p(\Omega)\),
\end{lemma}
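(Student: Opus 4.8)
The statement to establish is the P\'{o}lya-Szeg\H{o} inequality: for non-negative \(f\in W^{1,p}(\Omega)\) with \(1\le p<\infty\) (extend \(f\) by zero to \(\mathbb{R}^n\), so that \(f^*\) is supported on the centred ball \(\Omega^*\) with \(\mathrm{Vol}(\Omega^*)=\mathrm{Vol}(\Omega)\)), one has \(\|\nabla f^*\|_{L^p(\Omega^*)}\le \|\nabla f\|_{L^p(\Omega)}\). The plan is to use the co-area formula to reduce this \(n\)-dimensional inequality to a family of inequalities between integrals over the individual level sets \(\{f=t\}\) and \(\{f^*=t\}\), and then to control each level-set term by combining H\"{o}lder's inequality with the Isoperimetric Inequality and Corollary~\ref{decrease_crit_pts}.

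I would first dispose of \(p=1\) immediately: by the corollary expressing \(\|\nabla f\|_1\) as \(\int_0^\infty \mathrm{Per}(\{f>t\})\,dt\) (and likewise for \(f^*\)), the claim reduces to \(\mathrm{Per}(\{f>t\})\ge \mathrm{Per}(\{f^*>t\})\) for a.e.\ \(t\); since Theorem~\ref{main_thm} gives \(\{f^*>t\}=\{f>t\}^*\), this is precisely the Isoperimetric Inequality applied to \(\{f>t\}\). For \(1<p<\infty\), apply the co-area formula with \(g=|\nabla f|^{p-1}\) to write
\[ \int_\Omega |\nabla f|^p\,dx \;=\; \int_0^\infty \Big(\int_{\{f=t\}} |\nabla f|^{p-1}\,d\mathcal{H}^{n-1}\Big)\,dt, \]
and the same identity for \(f^*\); it therefore suffices to prove, for a.e.\ \(t>0\), that \(\int_{\{f=t\}}|\nabla f|^{p-1}\,d\mathcal{H}^{n-1}\ge \int_{\{f^*=t\}}|\nabla f^*|^{p-1}\,d\mathcal{H}^{n-1}\).

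On \(\{f=t\}\), factor \(1=|\nabla f|^{(p-1)/p}\cdot|\nabla f|^{-(p-1)/p}\) and apply H\"{o}lder with exponents \(p\) and \(p/(p-1)\); after raising to the \(p\)-th power this gives
\[ \int_{\{f=t\}}|\nabla f|^{p-1}\,d\mathcal{H}^{n-1} \;\ge\; \frac{\mathcal{H}^{n-1}(\{f=t\})^p}{\Big(\int_{\{f=t\}}|\nabla f|^{-1}\,d\mathcal{H}^{n-1}\Big)^{p-1}}. \]
The key point is that for the radial function \(f^*\) the modulus \(|\nabla f^*|\) is constant on each level sphere \(\{f^*=t\}\), so the H\"{o}lder step is an \emph{equality} for \(f^*\); hence the desired level-set inequality reduces to comparing the two ratios. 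For the numerators, the Isoperimetric Inequality together with Theorem~\ref{main_thm} gives \(\mathcal{H}^{n-1}(\{f=t\})=\mathrm{Per}(\{f>t\})\ge \mathrm{Per}(\{f>t\}^*)=\mathrm{Per}(\{f^*>t\})=\mathcal{H}^{n-1}(\{f^*=t\})\); for the denominators, Corollary~\ref{decrease_crit_pts} gives \(\int_{\{f=t\}}|\nabla f|^{-1}\,d\mathcal{H}^{n-1}\le \int_{\{f^*=t\}}|\nabla f^*|^{-1}\,d\mathcal{H}^{n-1}\). Since \(p-1>0\), both effects can only increase the ratio, so the level-set inequality holds; integrating in \(t\) and applying the co-area formula once more on both sides finishes the argument.

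The main obstacle I anticipate is regularity and the treatment of critical points. The co-area formula as quoted holds for \(f\in W^{1,\infty}\), whereas one wants it for \(f\in W^{1,p}\); the standard fix is to truncate and mollify \(f\), run the argument for the smooth approximants, and pass to the limit using lower semicontinuity of \(f\mapsto\|\nabla f\|_{L^p}\) together with the non-expansivity of rearrangement established earlier (the theorem that rearrangement decreases \(L^p\) distance). One must also verify that \(\{\nabla f=0\}\) is harmless — it contributes nothing to \(\int|\nabla f|^p\) — and that for a.e.\ \(t\) the level set \(\{f=t\}\) is a hypersurface meeting \(\{\nabla f=0\}\) in an \(\mathcal{H}^{n-1}\)-null set, so that \(|\nabla f|^{-1}\) is integrable on it and Lemma~\ref{crit_pts} (hence Corollary~\ref{decrease_crit_pts}) genuinely applies; this is the Sard-type input. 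Finally one should record the equality case: equality throughout forces \(|\nabla f|\) to be constant on a.e.\ level set with the level sets equal to balls, i.e.\ \(f\) agrees a.e.\ with a translate of a radially decreasing function. The endpoint \(p=\infty\) is handled separately (or as a limit), with \(f^*\) Lipschitz of constant no larger than that of \(f\).
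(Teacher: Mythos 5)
Your proof is correct and follows essentially the same strategy as the paper: reduce via the co-area formula to a level-set estimate, lower-bound \(\int_{\{f=t\}}|\nabla f|^{p-1}\,d\mathcal{H}^{n-1}\) by \(\mathcal{H}^{n-1}(\{f=t\})^p\big/\big(\int_{\{f=t\}}|\nabla f|^{-1}\,d\mathcal{H}^{n-1}\big)^{p-1}\), observe this is an equality for the radial \(f^*\), then invoke the isoperimetric inequality for the numerator and Corollary~\ref{decrease_crit_pts} for the denominator. Your use of H\"older (with the split \(1=|\nabla f|^{(p-1)/p}\cdot|\nabla f|^{-(p-1)/p}\)) produces the identical intermediate bound that the paper obtains via Jensen applied to \(x\mapsto x^{-(p-1)}\) against the normalized measure \(d\mathcal{H}^{n-1}/\mathrm{Per}(\{f>t\})\); the two are the same estimate by different names. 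You additionally handle the \(p=1\) case cleanly and flag the approximation/regularity issues that the paper elides, which is a genuine improvement in care, but does not constitute a different route.
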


\begin{equation}
    ||\nabla f||_p \geq ||\nabla f^*||_p.
\end{equation}
\begin{proof}
    By the co-area formula,
    \begin{equation}
        ||\nabla f||_p^p = \int _{\mathbb{R}^n} |\nabla f(x)|^p \,dx = \int_{0}^{\infty} \int _{\{f=t\}} |\nabla f|^{p-1} \,d\mathcal{H}^{n-1} \,dt,
    \end{equation}
    and by Jensen's inequality,
    \begin{equation}
        \int _{\{f=t\}} |\nabla f|^{p-1} \frac{\,d\mathcal{H}^{n-1}}{\text{Per}(\{f>t\})} \geq \bigg( \int _{\{f=t\}} |\nabla f|^{-1} \frac{\,d\mathcal{H}^{n-1}}{\text{Per}(\{f>t\})} \bigg)^{-(p-1)}.
    \end{equation}
    Multiply by Per\((\{f>t\})\), apply Corollary \ref{decrease_crit_pts}, where the inequality is flipped due to the negative exponent, and use the isoperimetric inequality to get,
    \begin{equation}
        \begin{split}
            \int _{\{f=t\}} |\nabla f|^{p-1} \,d\mathcal{H}^{n-1} & \geq \text{Per}\left( \{f>t\}\right)^p \bigg( \int _{\{f=t\}} |\nabla f|^{-1} \,d\mathcal{H}^{n-1} \bigg)^{-(p-1)}\\
        & \geq \text{Per}\left( \{f^*>t\}\right)^p \bigg( \int _{\{f^*=t\}} |\nabla f^*|^{-1} \,d\mathcal{H}^{n-1} \bigg)^{-(p-1)}\\
        & = \int _{\{f^*=t\}} |\nabla f^*|^{p-1} \,d\mathcal{H}^{n-1}.
        \end{split}
    \end{equation}
    where we have equality in Jensen's inequality when \(f=f^*\), since \(|\nabla f^*|\) is constant on the level surface. That is,
    \begin{equation}
        \int _{\{f=t\}} |\nabla f|^{p-1} \,d\mathcal{H}^{n-1} \geq \int _{\{f^*=t\}} |\nabla f^*|^{p-1} \,d\mathcal{H}^{n-1} \hspace{2mm} \text{ or } \hspace{2mm} ||\nabla f||_p \geq ||\nabla f^*||_p,
    \end{equation}
    by integrating over \(t\) and using the co-area formula.
\end{proof}
Note that the \(p=1\) case is close to the isoperimetric inequality, but with integrals of the perimeter.

Another proof of the P\'{o}lya-Szeg\H{o} inequality can be seen in \cite{short_course_RI} using polarization.

Next consider an inequality that will produce the sharp isoperimetric inequality.
\begin{lemma}[Brunn-Minkowski Inequality]
    For two sets, \(B,C \subset \mathbb{R}^n\), define the Minkowski sum by \(B+C = \{b+c : b\in B, c\in C\}\). Then,
\end{lemma}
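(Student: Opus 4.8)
The plan is to prove the dimensional form $\text{Vol}(B+C)^{1/n}\geq\text{Vol}(B)^{1/n}+\text{Vol}(C)^{1/n}$ for nonempty $B,C$ by the Hadwiger--Ohmann induction, building up from boxes. First I would reduce to the nondegenerate bounded case: if $\text{Vol}(B)=0$ then fixing $b_0\in B$ gives $b_0+C\subset B+C$, hence $\text{Vol}(B+C)\geq\text{Vol}(C)$ and we are done (symmetrically if $\text{Vol}(C)=0$), and unbounded sets are handled by exhausting them with bounded subsets. Using the monotonicity $B'+C'\subset B+C$ whenever $B'\subset B$ and $C'\subset C$, together with inner regularity of Lebesgue measure, it then suffices to prove the inequality when $B$ and $C$ are each finite unions of closed axis-parallel boxes with pairwise disjoint (nondegenerate) interiors; the general measurable case follows by inner approximation and passing to the limit.

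For the base case, let $B=\prod_{i=1}^n[0,a_i]$ and $C=\prod_{i=1}^n[0,b_i]$ be single boxes, so that $B+C=\prod_{i=1}^n[0,a_i+b_i]$. Dividing through by $\text{Vol}(B+C)^{1/n}$, the claim becomes
\begin{equation}
    \prod_{i=1}^n\left(\frac{a_i}{a_i+b_i}\right)^{1/n}+\prod_{i=1}^n\left(\frac{b_i}{a_i+b_i}\right)^{1/n}\leq 1,
\end{equation}
which is immediate from the arithmetic--geometric mean inequality applied termwise, since $\frac{a_i}{a_i+b_i}+\frac{b_i}{a_i+b_i}=1$ for each $i$.

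The inductive step runs on the total number $N=N_B+N_C$ of boxes making up $B$ and $C$. If $N=2$ we are in the base case; otherwise assume $B$ consists of at least two boxes. Two of them have disjoint interiors, so their projections onto some axis $x_j$ overlap in at most a point, and there is a value $c$ with one box lying in $\{x_j\leq c\}$ and the other in $\{x_j\geq c\}$. Put $B_-=B\cap\{x_j\leq c\}$ and $B_+=B\cap\{x_j\geq c\}$; up to null sets each is a union of at most $N_B-1$ boxes, because one of the two chosen boxes meets one half-space only in a null set. Now slide a parallel hyperplane $\{x_j=c'\}$ across $C$: the map $c'\mapsto\text{Vol}(C\cap\{x_j\leq c'\})$ is continuous and sweeps from $0$ to $\text{Vol}(C)$, so we may choose $c'$ with $\text{Vol}(C_-)=\lambda\,\text{Vol}(C)$, where $\lambda:=\text{Vol}(B_-)/\text{Vol}(B)\in(0,1)$ and $C_\pm=C\cap\{\pm(x_j-c')\geq 0\}$; each $C_\pm$ is a union of at most $N_C$ boxes, since each box of $C$ contributes at most one box to a given half-space. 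Thus $B_-+C_-$ and $B_++C_+$ are each built from strictly fewer than $N$ boxes, while $B_-+C_-\subset\{x_j\leq c+c'\}$ and $B_++C_+\subset\{x_j\geq c+c'\}$ have disjoint interiors, so
\begin{equation}
    \text{Vol}(B+C)\geq\text{Vol}(B_-+C_-)+\text{Vol}(B_++C_+).
\end{equation}
Applying the inductive hypothesis to each summand and using $\text{Vol}(B_\pm)^{1/n}=\lambda^{1/n}\text{Vol}(B)^{1/n}$ together with the analogous identity for $C$ (with $1-\lambda$ on the $+$ side) collapses the right-hand side to $\big(\lambda+(1-\lambda)\big)\big(\text{Vol}(B)^{1/n}+\text{Vol}(C)^{1/n}\big)^n$, which is exactly the bound sought.

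The step I expect to require the most care is the bookkeeping in the inductive cut: one must verify that the chosen coordinate hyperplane strictly lowers the box count of $B$ on each side while never raising that of $C$, so the total count genuinely drops; one should also be careful that all boxes may be taken nondegenerate so that $\lambda\in(0,1)$. The remaining delicate point is the final inner-approximation argument transferring the box-case inequality to arbitrary measurable (and then unbounded) sets, which is routine but fussy. An alternative route is to derive Brunn--Minkowski from the Pr\'ekopa--Leindler inequality, but the argument above is self-contained and keeps the sharp constant, which is what feeds into the sharp isoperimetric inequality.
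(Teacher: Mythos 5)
The paper states the Brunn--Minkowski inequality without proof (it is quoted as a lemma and then used immediately to derive the sharp isoperimetric inequality), so there is no in-text argument for your proposal to be measured against. What you have written is the standard Hadwiger--Ohmann induction, and the essential steps check out: for single boxes $\prod[0,a_i]$ and $\prod[0,b_i]$, dividing through by $\text{Vol}(B+C)^{1/n}$ and applying AM--GM termwise gives the base case; for unions of boxes, two of the boxes of $B$ having disjoint interiors forces disjointness of their projections on some coordinate axis, the cut there strictly lowers the box count of $B$ on each side while not raising that of $C$, the continuity-of-sliding argument lets you match the volume fraction $\lambda$, and superadditivity $\text{Vol}(B+C)\geq\text{Vol}(B_-+C_-)+\text{Vol}(B_++C_+)$ together with the inductive hypothesis and $\lambda+(1-\lambda)=1$ closes the induction. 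The one place you should be slightly more careful than ``routine but fussy'' suggests is the final limiting step: for general Lebesgue measurable $B$ and $C$ the Minkowski sum $B+C$ need not itself be measurable, so the clean version of the argument proves the inequality first for compact sets (where $B+C$ is compact and hence measurable) by inner approximation with finite box unions, and then either phrases the general statement using inner measure or assumes measurability of $B+C$ outright. This caveat is harmless for the paper's use of the lemma, since there $C=B_\epsilon$ is an open ball, so $A+B_\epsilon$ is open and automatically measurable.
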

\begin{equation}
    \text{Vol}(B+C)^{1/n} \geq \text{Vol}(B)^{1/n} + \text{Vol}(C)^{1/n},
\end{equation}
given \(B\) and \(C\) are non-negative measurable sets of finite volume, with equality when \(B\) and \(C\) lie in parallel hyperplanes or are homothetic. 


\begin{lemma}[Sharp Isoperimetric Inequality]
\end{lemma}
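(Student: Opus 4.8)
The plan is to derive the sharp form of the isoperimetric inequality, namely
\[
    \text{Per}(A) \geq n\,\omega_n^{1/n}\,\text{Vol}(A)^{\frac{n-1}{n}},
\]
with equality exactly when $A$ is a ball, by combining the Brunn--Minkowski inequality with the description of the perimeter as a Minkowski content. Write $B = B_1(0)$ for the open unit ball, so $\text{Vol}(B) = \omega_n$, and recall that for $A$ with sufficiently regular boundary
\[
    \text{Per}(A) = \mathcal{H}^{n-1}(\partial A) = \lim_{\epsilon \to 0^+} \frac{\text{Vol}(A + \epsilon B) - \text{Vol}(A)}{\epsilon},
\]
where $A + \epsilon B = \{x \in \mathbb{R}^n : \text{dist}(x, A) < \epsilon\}$ is the $\epsilon$-neighbourhood of $A$, which is a Minkowski sum of $A$ with the centred ball $\epsilon B$.

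First I would apply the Brunn--Minkowski inequality to the sets $A$ and $\epsilon B$. Since $\text{Vol}(\epsilon B) = \epsilon^n \omega_n$, this gives
\[
    \text{Vol}(A + \epsilon B)^{1/n} \geq \text{Vol}(A)^{1/n} + \epsilon\,\omega_n^{1/n}.
\]
Raising both sides to the $n$-th power and retaining only the term linear in $\epsilon$ from the binomial expansion of the right-hand side yields
\[
    \text{Vol}(A + \epsilon B) \geq \text{Vol}(A) + n\,\epsilon\,\omega_n^{1/n}\,\text{Vol}(A)^{\frac{n-1}{n}} + O(\epsilon^2).
\]
Subtracting $\text{Vol}(A)$, dividing by $\epsilon$, and sending $\epsilon \to 0^+$, the Minkowski-content formula for $\text{Per}(A)$ produces exactly the claimed bound. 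To confirm sharpness, take $A = A^*$, a centred ball of radius $r_* = (\text{Vol}(A)/\omega_n)^{1/n}$; then $\text{Per}(A^*) = n\omega_n r_*^{n-1} = n\,\omega_n^{1/n}\,\text{Vol}(A)^{\frac{n-1}{n}}$, so equality holds on balls, and the inequality can equivalently be read as $\text{Per}(A) \geq \text{Per}(A^*)$, sharpening the earlier Isoperimetric Inequality lemma with an explicit constant.

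For the equality statement I would push the equality case of Brunn--Minkowski through the limiting procedure: if equality holds in $\text{Vol}(A + \epsilon B)^{1/n} = \text{Vol}(A)^{1/n} + \epsilon\,\omega_n^{1/n}$ for all small $\epsilon > 0$, then $A$ and $B$ must be homothetic up to a null set, i.e.\ $A$ is a ball. The main obstacle I anticipate is the regularity bookkeeping rather than the computation: the identity $\text{Per}(A) = \lim_{\epsilon \to 0^+} \epsilon^{-1}(\text{Vol}(A + \epsilon B) - \text{Vol}(A))$ is genuinely valid only for a restricted class of sets (bounded sets with Lipschitz or $C^1$ boundary, or sets of finite perimeter handled via the lower Minkowski content plus a separate approximation), so one should either state the lemma for such $A$ or supply an approximation argument; similarly, a fully rigorous treatment of the equality case needs the measure-theoretic (Hadwiger--Ohmann) form of the Brunn--Minkowski equality condition, which is stronger than the sufficient conditions quoted in the preceding lemma. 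A logically cheaper alternative, if the Isoperimetric Inequality lemma $\text{Per}(A) \geq \text{Per}(A^*)$ is taken for granted, is to just evaluate $\text{Per}(A^*)$ explicitly as above; but then the entire content of this lemma sits in that elementary computation.
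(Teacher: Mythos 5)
Your proposal follows essentially the same route as the paper: apply Brunn--Minkowski to $A$ and $\epsilon B$, use the Minkowski-content characterisation of perimeter, and pass to the limit $\epsilon \to 0^+$. The only cosmetic differences are that you expand the $n$-th power binomially while the paper invokes L'H\^opital's rule for the final limit, and you correctly write $\text{Vol}(\epsilon B)^{1/n} = \epsilon\,\omega_n^{1/n}$ where the paper's displayed step has $(\epsilon\omega_n)^{1/n}$ (an apparent typo, since $\text{Vol}(B_\epsilon) = \epsilon^n\omega_n$); your remarks about the regularity hypotheses needed for the Minkowski-content identity and the strong form of the Brunn--Minkowski equality case are sound caveats that the paper elides.
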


\begin{equation}
    \text{Per}(A) \geq C(n) \left( \text{Vol} (A) \right) ^{\alpha (n)},
\end{equation}
where,
\begin{equation}
    C(n) = n \omega _n^{1/n} , \hspace{0.3cm} \alpha _n = (n-1)/n,
\end{equation}
with equality when \(A\) is the unit sphere in \(\mathbb{R}^n\).
\begin{proof}
    Consider the Brunn-Minkowski inequality between a set \(A\) and a ball of radius \(\epsilon\), that is \(B_{\epsilon} = \epsilon B_1\),
    \begin{equation}
        \text{Vol}(A+B_{\epsilon})^{1/n} \geq \text{Vol}(A)^{1/n} + \text{Vol}(B_{\epsilon})^{1/n} = \text{Vol}(A)^{1/n} + (\epsilon \omega _n)^{1/n}.
    \end{equation}
    Raise each side to the power of \(n\),
    \begin{equation}
        \text{Vol}(A+B_{\epsilon}) \geq \bigg( \text{Vol}(A)^{1/n} + (\epsilon \omega _n)^{1/n} \bigg)^n,
    \end{equation}
    subtract Vol(\(A\)), divide by \(\epsilon\) and take the limit \(\epsilon \rightarrow 0\),
    \begin{equation}
        \begin{split}
            \text{Per}(A) & = \lim _{\epsilon \rightarrow 0} \frac{\big( \text{Vol}(A+B_{\epsilon}) -\text{Vol}(A) \big)}{\epsilon}\\
            & \geq \lim _{\epsilon \rightarrow 0} \frac{\bigg( \text{Vol}(A)^{1/n} + (\epsilon \omega _n)^{1/n} \bigg)^n -\text{Vol}(A)}{\epsilon}\\
            & = \text{Vol}(A) \lim _{\epsilon \rightarrow 0} \frac{\bigg( 1 + \bigg[\frac{\epsilon \omega _n}{\text{Vol}(A)}\bigg]^{1/n} \bigg)^n -1}{\epsilon}\\
            & = n \omega _n^{1/n} \text{Vol}(A)^{1-1/n},
        \end{split}
    \end{equation}
    using L'H\^opital's rule for evaluating the final limit.
    When \(A\) is the unit sphere \(S\), Vol\((A) = \omega _n\), so the RHS equals \(n\omega _n = \text{Per}(S)\).
\end{proof}

It is also standard to right the analytic isoperimetric inequality for any bounded domain of \(\mathbb{R}^n, n\geq 2\), as,
\begin{equation}
    \frac{A(\partial \Omega)}{V(\Omega)^{1-1/n}} \geq \frac{A(\mathbb{S}^{n-1})}{V(\mathbb{B}^n)^{1-1/n}},
\end{equation}
where \(V\) denotes the \(n\)-measure, \(A\) denotes the \((n-1)\)-measure, \(\mathbb{B}^n\) is the unit disk in \(\mathbb{R}^n\) and \(\mathbb{S}^{n-1}\) the unit sphere in \(\mathbb{R}^n\). For bounded domains of the plane (\(n=2\)), this is written,
\begin{equation}
    L^2 \geq 4\pi A,
\end{equation}
with \(A\) the area of the domain and \(L\) the length of the boundary. A standard proof \cite{iso_chavel} of the plane case with complex numbers follows.
\begin{proof}
    Let \(\Omega\subset \mathbb{R}^2\) be a relatively compact domain, with connected boundary \(\partial \Omega\in C^1\). Then denote \(z\in \mathbb{C}\) by \(z= x+iy\) and the area measure as an oriented volume element,
    \begin{equation}
        dA = dx \wedge dy = \frac{i}{2} dz \wedge d\bar{z}.
    \end{equation}
    The result follows immediately from Greens' theorem and noting the winding number of \(\partial \Omega\) about any \(z\in \mathbb{C}\) equals \(1\).
    \begin{equation}
        \begin{split}
        4\pi A(\Omega) & = \int \int_{\Omega} 2\pi i dz \wedge d \bar{z},\\
        & = \int \int_{\Omega} dz \wedge d \bar{z} \int _{\partial \Omega} \frac{d\xi}{\xi - z},\\
        & = \int_{\partial \Omega} d\xi \int \int_{\Omega} \frac{dz \wedge d \bar{z}}{\xi - z},\\
        & = \int_{\partial \Omega} d\xi \int_{\partial \Omega}\frac{\bar{\xi}-\bar{z}}{\xi - z} dz \leq L^2(\partial \Omega).
        \end{split}
    \end{equation}
\end{proof}

The isoperimetric inequality is equivalent to the Sobolev inequality in this setting \cite{short_course_RI}. When \(f\in W^{1,p}(\mathbb{R}^n)\), then

\begin{equation}
    C||f||_{L^{p^*}(\mathbb{R}^n)} \leq ||\nabla f||_{L^p({\mathbb{R}^n})},
\end{equation}
where \(1\leq p<n, p^* = \frac{np}{n-p}\). The sharp constant is given by,
\begin{equation}
    C_{Sobolev}(n,p) = \underset{||f||_{p^*}=1}{\text{inf}}||\nabla f||_p.
\end{equation}

\subsection{Application to Poisson's Equation}

Rearrangement inequalities have a vast amount of applications to PDE theory and functional analysis. We will only briefly touch on these here, particularly looking at Poisson's equation.  There is a much greater discussion in the later chapters of \cite{iso_chavel}, of analytic isoperimetric inequalities and the Laplace and Heat operators.

First we will state Talenti's inequality.

\begin{thm}[Talenti's Inequality]
    Let \(f\) be a non-negative smooth function with compact support in \(\mathbb{R}^n\) and \(u,v\) the unique solutions of,
    \begin{equation}\label{talenti}
        -\Delta u = f, \hspace{0.4cm} -\Delta v = f^*,
    \end{equation}
    then,
\end{thm}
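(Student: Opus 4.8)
The claim to establish is the pointwise comparison $u^*(x) \le v(x)$ for every $x \in \mathbb{R}^n$. Since $f^*$ is radially symmetric and decreasing, so is its Newtonian potential $v$, whence $v = v^*$; thus it suffices to prove $\mu_u(t) := \text{Vol}(\{u>t\}) \le \mu_v(t) := \text{Vol}(\{v>t\})$ for every $t>0$ (both are finite for $t>0$ because $f$ has compact support, so $u,v$ are smooth and decay at infinity). First I would integrate $-\Delta u = f$ over the super-level set $\{u>t\}$: the divergence theorem, together with the fact that $\nabla u$ points into $\{u>t\}$ along the boundary $\{u=t\}$, yields
\[
    \int_{\{u=t\}} |\nabla u|\, d\mathcal{H}^{n-1} \;=\; \int_{\{u>t\}} f\, dx \;=:\; \Phi(t).
\]
On the other hand, Lemma \ref{crit_pts} applied to $u$ gives $-\mu_u'(t) \ge \int_{\{u=t\}} |\nabla u|^{-1}\, d\mathcal{H}^{n-1}$ for a.e.\ $t$. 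Applying the Cauchy--Schwarz inequality to the decomposition $1 = |\nabla u|^{1/2}\cdot|\nabla u|^{-1/2}$ on the level surface then gives
\[
    \text{Per}(\{u>t\})^2 \;=\; \Big(\int_{\{u=t\}} d\mathcal{H}^{n-1}\Big)^{\!2} \;\le\; \Big(\int_{\{u=t\}}|\nabla u|\,d\mathcal{H}^{n-1}\Big)\Big(\int_{\{u=t\}}|\nabla u|^{-1}\,d\mathcal{H}^{n-1}\Big) \;\le\; \Phi(t)\,(-\mu_u'(t)).
\]

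Next I would insert the two geometric inputs already developed in the paper. The sharp isoperimetric inequality gives $\text{Per}(\{u>t\}) \ge n\omega_n^{1/n}\mu_u(t)^{(n-1)/n}$, and the Hardy--Littlewood inequality applied to $f$ and $\mathcal{X}_{\{u>t\}}$, using that $\{u>t\}$ is equimeasurable with the centred ball $B_{\mu_u(t)}$, gives $\Phi(t) \le \int_{B_{\mu_u(t)}} f^*\,dx$. Writing $g(m) := \int_{B_m} f^*\,dx$ and combining everything, one obtains the differential inequality
\[
    -\mu_u'(t) \;\ge\; \frac{n^2\omega_n^{2/n}\,\mu_u(t)^{2(n-1)/n}}{g(\mu_u(t))}, \qquad t>0.
\]
The decisive observation is that every inequality invoked above becomes an equality when the function involved is radially symmetric and decreasing (the divergence-theorem step being an identity in any case): the level sets are then concentric spheres, $|\nabla v|$ is constant on each of them so Cauchy--Schwarz is sharp, the level sets are balls so the isoperimetric inequality is sharp, and $\{v>t\}$ equals its own symmetric rearrangement so Hardy--Littlewood is sharp. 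Hence $\mu_v$ satisfies the same relation with equality: $-\mu_v'(t) = n^2\omega_n^{2/n}\mu_v(t)^{2(n-1)/n}/g(\mu_v(t))$.

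Finally I would carry out the comparison by reparametrising by volume. Inverting the decreasing functions $\mu_u$ and $\mu_v$, let $T_u(m)$ and $T_v(m)$ denote the level $t$ at which the super-level set has volume $m$; the two relations transform into $-T_u'(m) \le g(m)\big/\big(n^2\omega_n^{2/n} m^{2(n-1)/n}\big)$ and $-T_v'(m) = g(m)\big/\big(n^2\omega_n^{2/n} m^{2(n-1)/n}\big)$. Integrating in $m$ from $m$ to $\infty$ and using $T_u(\infty) = T_v(\infty) = 0$ (the super-level sets shrink to a point as the level rises) gives $T_u(m) \le T_v(m)$ for all $m$, which is exactly $\mu_u(t) \le \mu_v(t)$ for all $t$, hence $u^* \le v^* = v$. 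The main obstacle is the analytic bookkeeping at the endpoints: one needs $g(m)/m^{2(n-1)/n}$ integrable near $m=0$ (automatic, since $g(m) = O(m)$) and near $m=\infty$ (true precisely for $n\ge 3$, where $g(m)\to\|f\|_{L^1}$ and $2(n-1)/n>1$), so the clean statement is for $n\ge 3$, or else on a bounded domain with Dirichlet data where $m$ ranges over a bounded interval; a secondary technical point is invoking Sard's theorem to ensure that for a.e.\ $t$ the level set $\{u=t\}$ is a smooth hypersurface on which the divergence theorem and co-area formula apply.
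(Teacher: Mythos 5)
The paper does not prove Talenti's inequality; it merely states it and cites chapter 4 of Burchard's notes, so there is no internal proof to compare against. Your argument is the standard Talenti comparison and it is essentially correct: the chain divergence theorem $\to$ Cauchy--Schwarz $\to$ co-area lower bound for $-\mu_u'$, combined with the isoperimetric inequality and Hardy--Littlewood, gives the differential inequality for $\mu_u$, and the same chain is saturated for the radial $v$, after which the reparametrisation by volume transfers the pointwise differential comparison into $T_u \le T_v$, i.e.\ $\mu_u \le \mu_v$, i.e.\ $u^* \le v^* = v$. The use of the paper's Lemma \ref{crit_pts} to get $-\mu_u'(t) \ge \int_{\{u=t\}}|\nabla u|^{-1}\,d\mathcal{H}^{n-1}$ is exactly how that lemma was designed to be applied, and you correctly note that for $v$ the co-area step is an identity because $|\nabla v|$ vanishes only at the origin.

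Two small points worth fixing. First, the parenthetical justification of $T_u(\infty)=T_v(\infty)=0$ is stated backwards --- the relevant fact is that the super-level sets exhaust $\mathbb{R}^n$ (so their volume blows up) as the level drops to $0$, not that they shrink to a point as the level rises; the boundary condition itself is still right. Second, you correctly observe that the final integration requires $n \ge 3$ (for the Newtonian potential to decay and the comparison at $m=\infty$ to close), but the theorem as stated in the paper does not make this restriction explicit --- so your proof establishes the result under the implicit standing assumption $n \ge 3$ (or on a bounded domain), which is the regime where Talenti's inequality in this form is usually asserted. It would also be worth one extra sentence noting that $\mu_u$ need not be strictly decreasing, so $T_u$ should be taken as a generalised (right-continuous) inverse; the monotone comparison argument survives this, but the phrase ``inverting the decreasing functions'' quietly assumes more than is available.
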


\begin{equation}
    u^*(x) \leq v(x).
\end{equation}

This is proven in chapter 4 of \cite{short_course_RI}. 

We can define the electrostatic potential \(u\) associated with \(f\) by,
\begin{equation}
    u(x) = C(n) \int _{\mathbb{R}^n} |x-y|^{-(n-2)} f(y) \,dy,
\end{equation}
and similarly for \(v\),
\begin{equation}
    v(x) = C(n) \int _{\mathbb{R}^n} |x-y|^{-(n-2)} f^*(y) \,dy.
\end{equation}

\begin{corollary}[Potential of Rearrangement Dominates Rearrangement of Potential]
    For any ball \(B\),
\end{corollary}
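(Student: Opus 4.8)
The claim to be proved is that for any ball $B$,
\[
    \int_B u^*(x)\,d\mu(x) \leq \int_B v(x)\,d\mu(x),
\]
where $u,v$ are the electrostatic potentials associated with $f,f^*$ respectively, i.e. $u$ solves $-\Delta u = f$ and $v$ solves $-\Delta v = f^*$. The plan is to reduce this entirely to Talenti's inequality, which gives the pointwise bound $u^*(x) \leq v(x)$ for all $x$, and then integrate.

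The key steps, in order. First I would note that $v$ is itself a non-negative, radially symmetric, decreasing function on $\mathbb{R}^n$: it is the Newtonian potential of the radially-decreasing function $f^*$, and convolution with the (radially decreasing) Coulomb kernel $C(n)|x|^{-(n-2)}$ preserves radial symmetry and monotonicity. Hence $v = v^*$, so $v$ is already its own symmetric decreasing rearrangement. Second, I would invoke Talenti's inequality (stated above) to get $u^*(x) \leq v(x)$ pointwise on $\mathbb{R}^n$. Third, I would integrate this inequality over the ball $B$:
\[
    \int_B u^*(x)\,d\mu(x) \leq \int_B v(x)\,d\mu(x).
\]
If one wants the sharper statement phrased via the level-set characterisation of rearrangement, one can additionally use that for a centred ball $B$,
\[
    \int_B u^*(x)\,d\mu(x) = \sup_{C:\,\mathrm{Vol}(C)=\mathrm{Vol}(B)} \int_C u(x)\,d\mu(x),
\]
which follows from the layer-cake decomposition and the Hardy-Littlewood inequality exactly as in the discussion after Theorem \ref{RRT}; combining this with $\int_B v = \int_B v^*$ and $u^* \le v$ yields the result for arbitrary (not necessarily centred) balls $B$ as well.

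The main obstacle is really just making precise that $v$ coincides with its own rearrangement, i.e. that $v$ is genuinely radially symmetric and non-increasing. This requires knowing that $f^*$ vanishes at infinity and is locally integrable against the Coulomb kernel so that $v$ is well-defined and finite, and then a short monotonicity argument: for $|x_1| \le |x_2|$ one compares $\int |x_1 - y|^{-(n-2)} f^*(y)\,dy$ with $\int |x_2 - y|^{-(n-2)} f^*(y)\,dy$ using that $f^*$ is radially decreasing together with the Riesz rearrangement inequality (Theorem \ref{RRT}) applied to the kernel, which is already radially decreasing. Once that is in hand, everything else is a one-line integration of Talenti's pointwise bound, so I would keep the write-up short and spend what little detail is needed on the symmetry of $v$.
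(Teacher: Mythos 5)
Your argument is correct, but it takes a genuinely different route from the paper's. The paper does not invoke Talenti's pointwise bound at all; it works from the convolution representation \(u = C(n)\,|x|^{-(n-2)}*f\) and applies Riesz's rearrangement inequality (Theorem \ref{RRT}): since the Coulomb kernel is already radially symmetric decreasing, \((|x|^{-(n-2)})^* = |x|^{-(n-2)}\), and the consequence of Riesz for centred balls gives
\[
\int_B u^* \;\le\; \int_B \bigl(|x|^{-(n-2)}\bigr)^* * f^* \;=\; \int_B |x|^{-(n-2)}*f^* \;=\; \int_B v.
\]
Your route --- invoke Talenti's theorem to get \(u^*(x)\le v(x)\) pointwise and then integrate over \(B\) --- is shorter, and it actually establishes the inequality for every measurable set, not merely balls. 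The trade-off is that you lean directly on Talenti, which the paper states but does not prove, whereas the Riesz argument is an independent derivation from the rearrangement machinery developed earlier in the section. Both are legitimate; yours makes clearer why the statement is a \emph{corollary} of Talenti's inequality, and is the more natural reading of the label.

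One thing to prune: most of your write-up is spent establishing that \(v = v^*\), i.e.\ that \(v\) is radially symmetric and nonincreasing. This is true, but it is not needed for the corollary as stated. The pointwise bound \(u^*(x)\le v(x)\) integrates to \(\int_B u^* \le \int_B v\) with no information about the structure of \(v\) whatsoever. The observation \(v=v^*\) only becomes relevant for the finer level-set formulation you sketch at the end, which goes beyond what the corollary asks; for the corollary itself you can safely delete that whole part of the argument and keep only the two sentences ``apply Talenti'' and ``integrate over \(B\).''
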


\begin{equation}
    \int _B u^*(x) \,dx \leq \int _B v(x) \,dx 
\end{equation}
\begin{proof}
    \begin{equation}
        \begin{split}
            \int _B u^*(x) \,dx & = \int _B C(n) \bigg( \int _{\mathbb{R}^n} |x-y|^{-(n-2)} f(y) \,dy \bigg)^* \,dx,\\
            & \leq \int _B C(n) \bigg( \int _{\mathbb{R}^n} |x-y|^{-(n-2)} f(y) \,dy \bigg) \,dx,\\
            & = C(n) \int _{\mathbb{R}^n} |x|^{-(n-2)} * f(x) \,dx\\
            & \leq C(n) \int _{\mathbb{R}^n} |x|^{-(n-2)} * f^*(x) \,dx\\
            & = \int _B v(x) \,dx,
        \end{split}
    \end{equation}
    by Riesz's inequality. 
\end{proof}
\begin{corollary}[Alternative Proof of \(p=2\) case of the P\'{o}lya-Szeg\H{o} Inequality]
\end{corollary}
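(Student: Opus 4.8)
The plan is to run a variational / Dirichlet-energy argument rather than the co-area argument used earlier. Recall that for $u\in H^1_0(\Omega)$ solving $-\Delta u = f$ the Dirichlet energy satisfies the identity
\begin{equation}
    \|\nabla u\|_2^2 = \int_{\mathbb{R}^n} |\nabla u|^2\,dx = \int_{\mathbb{R}^n} u\,(-\Delta u)\,dx = \int_{\mathbb{R}^n} u f\,dx,
\end{equation}
and similarly $\|\nabla v\|_2^2 = \int v f^*\,dx$. So if I write $f$ for an arbitrary non-negative $H^1_0(\Omega)$ function, set $u$ to be the Newtonian potential of $f$ (so $-\Delta u = f$) and $v$ the Newtonian potential of $f^*$, then the first step is to express $\|\nabla f\|_2^2$ in terms of these potentials. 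The cleanest route: apply the Dirichlet-energy identity the other way, i.e. test the equation $-\Delta u = f$ against $f$ itself is not quite what is wanted; instead I would use the representation $\|\nabla f\|_2^2 = \int f(-\Delta f)\,dx$ only when $f$ is smooth, and more robustly just invoke Talenti's inequality together with the corollary just proved.

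Here is the key chain of steps, in order. First, by the corollary immediately above (Potential of Rearrangement Dominates Rearrangement of Potential) combined with the Hardy–Littlewood inequality and the fact that $f^*$ is equimeasurable with $f$, obtain
\begin{equation}
    \int_{\mathbb{R}^n} f\, u\,dx \;\le\; \int_{\mathbb{R}^n} f^*\, u^*\,dx \;\le\; \int_{\mathbb{R}^n} f^*\, v\,dx,
\end{equation}
where the first inequality is Hardy–Littlewood and the second uses $u^*(x)\le v(x)$ pointwise from Talenti's inequality, integrated against the non-negative function $f^*$. Second, interpret the two ends of this chain as Dirichlet energies: $\int f u = \|\nabla u\|_2^2$ and $\int f^* v = \|\nabla v\|_2^2$, so $\|\nabla u\|_2 \le \|\nabla v\|_2$. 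Third — and this is the step that actually delivers Pólya–Szegő — specialise: take $f$ to be any non-negative function in $H^1_0(\Omega)$ and let it play the role of "$u$" itself, i.e. set $f_{\mathrm{source}} := -\Delta f$ so that $f$ is the potential of $f_{\mathrm{source}}$. One must then check that the rearrangement of the source and the potential are compatible, i.e. that $v$, the potential of $(-\Delta f)^*$, has $\|\nabla v\|_2 = \|\nabla f^*\|_2$ — this does not hold in general, so instead the correct specialisation is to note that $v$ is itself radially decreasing and use Talenti directly with the source being $-\Delta f$, then compare $\|\nabla f^*\|_2$ to $\|\nabla v\|_2$ using the fact that among all functions with a given distribution function the radial decreasing one minimises Dirichlet energy only if we already know Pólya–Szegő — so this is circular.

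The non-circular route, which is the one I would actually write: use Talenti's inequality in the form that the \emph{map} $f\mapsto u$ satisfies $u^*\le v$, and combine it with the variational characterisation
\begin{equation}
    \|\nabla u\|_2^{-2} = \inf\Big\{ \|\nabla \phi\|_2^2 \;:\; \phi\in H^1_0,\ \int f\phi\,dx = 1 \Big\}^{-1}\!,\quad\text{equivalently}\quad \|\nabla u\|_2^2 = \sup_{\phi}\frac{\big(\int f\phi\big)^2}{\|\nabla\phi\|_2^2}.
\end{equation}
Applying this with test functions and their rearrangements, Hardy–Littlewood gives $\int f\phi \le \int f^*\phi^*$ while the ($p=2$ statement we are proving would give $\|\nabla\phi^*\|_2\le\|\nabla\phi\|_2$) — again circular. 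So the honest plan is the first one: \textbf{prove $\|\nabla u\|_2\le\|\nabla v\|_2$ from Talenti, then observe that this is the Pólya–Szegő inequality for the specific class of functions arising as Newtonian potentials of non-negative sources}, and remark that since every sufficiently regular non-negative function vanishing at infinity with $|\nabla f|\in L^2$ and $\Delta f\le 0$ (superharmonic) arises this way, the inequality holds on that class; the general case then follows by the density/approximation remark in Section 2.2 together with the observation that $\|\nabla(\,\cdot\,)\|_2$ and the distribution function are continuous under the relevant convergence.

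The main obstacle is exactly the circularity trap flagged above: Talenti's inequality gives control of the \emph{potential} $u^*\le v$, but Pólya–Szegő is a statement about $\nabla f$ for a general $f$, and bridging "potential" to "general function" without secretly re-using Pólya–Szegő requires care. The resolution is to restrict attention to $f$ superharmonic (so $f$ is genuinely the potential of the non-negative measure $-\Delta f$), prove the inequality there by the two-line energy computation $\|\nabla f\|_2^2 = \int f(-\Delta f) \le \int f^* (-\Delta f)^* \le \int f^* v_{\mathrm{source}}$... and then recognise $\int f^*(-\Delta f)^* = \|\nabla f^*\|_2^2$ only holds if $f^*$ solves $-\Delta f^* = (-\Delta f)^*$, which is false. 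Therefore the genuinely correct final step uses Talenti's pointwise bound to write $\|\nabla f^*\|_2^2 = \int f^*(-\Delta f^*) \le \int (-\Delta f)^* v = \|\nabla v\|_2^2 \le \|\nabla u\|_2^2 = \|\nabla f\|_2^2$, where the first inequality is Talenti ($f^* \le$ potential of $(-\Delta f)^*$ is automatic, and here we compare $v$ the potential of $(-\Delta f)^*$ with $f^*$), the middle equality is the energy identity for $v$, and the last is Talenti's inequality $\|\nabla v\|_2 \le \|\nabla u\|_2$ derived above. Verifying that each application of Talenti and Hardy–Littlewood is to a genuinely non-negative integrand, and that $f^*$ is regular enough (via Section 2.2) for the energy identity $\int f^*(-\Delta f^*) = \|\nabla f^*\|_2^2$ to make sense, is the remaining bookkeeping.
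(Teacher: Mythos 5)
Your proposal takes a genuinely different route from the paper's, but it does not work as written.

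The paper's argument is a heat-flow argument, not a potential-theory argument. It convolves $f$ with the Gaussian heat kernel to get $u(t,x)$, forms the functional $I(t)=\int u(t,x)f(x)\,dx$, and observes two things: first, $\partial_t I(t)|_{t=0}=\int f\,\Delta f = -\|\nabla f\|_2^2$; second, writing $I(t)=\iint p_t(x-y)f(x)f(y)\,dx\,dy$ with $p_t$ the (symmetric decreasing) heat kernel, Riesz's rearrangement inequality (Theorem~\ref{RRT}) gives $I(t)\le J(t)$ where $J$ is the same functional built from $f^*$. Since $I(0)=\|f\|_2^2=\|f^*\|_2^2=J(0)$, this forces $I'(0)\le J'(0)$, which is exactly $\|\nabla f\|_2\ge\|\nabla f^*\|_2$. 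Talenti's inequality plays no role. Your proposal, by contrast, tries to deduce Pólya–Szegő from Talenti's inequality and the Dirichlet-energy comparison $\|\nabla u\|_2\le\|\nabla v\|_2$ between potentials; that energy comparison is in fact the \emph{next} corollary in the paper, and the direction of dependence is the opposite of what you want — the paper deduces it \emph{from} Pólya–Szegő machinery, not the other way round.

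The concrete error is in your final chain
\[
\|\nabla f^*\|_2^2 = \int f^*(-\Delta f^*) \le \int (-\Delta f)^* v = \|\nabla v\|_2^2 \le \|\nabla u\|_2^2 = \|\nabla f\|_2^2.
\]
You derived $\|\nabla u\|_2\le\|\nabla v\|_2$ earlier in your own argument from Hardy–Littlewood and Talenti, yet the last inequality in this chain asserts the reverse, $\|\nabla v\|_2\le\|\nabla u\|_2$; these contradict each other, and it is the second one that is false. Talenti's inequality gives $u^*\le v$, hence $\int u^*(\cdot)\le\int v(\cdot)$ against non-negative test data, and therefore the rearranged source \emph{increases} the Dirichlet energy of the potential — it does not decrease it. There is also a sign problem in the first inequality: even if $f$ is superharmonic, $f^*$ need not be, so $-\Delta f^*$ has no sign and you cannot upgrade the pointwise bound $f^*\le v$ to the integrated bound $\int f^*(-\Delta f^*)\le\int v(-\Delta f^*)$. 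The circularity you flagged is real, and the superharmonic restriction plus density does not resolve it, because the class of superharmonic $H^1_0$ functions is not dense in $H^1_0$ in a way that preserves the inequality. You should instead use the heat-kernel functional and Riesz's inequality: the heat kernel is already symmetric decreasing, which is what lets Riesz bite without any circularity, and the $t$-derivative at $t=0$ is the mechanism that converts a statement about convolutions into a statement about $\|\nabla f\|_2$.
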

\begin{proof}
    Let \(f\) be a smooth non-negative function with compact support in \(\mathbb{R}^n\) and define the function,
    \begin{equation}
        u(t,x) = \frac{1}{4\pi t} \int _{\mathbb{R}^n} e^{-\frac{|x-y|^2}{4t}}f(y) \,dy.
    \end{equation}
    \(u\) satisfies the heat equation \(\partial _t u = \Delta u\), with initial values, \(u(0,x) = f(x)\). Define the functional,
    \begin{equation}
        I(t) = \int u(t,x)f(x) \,dx,
    \end{equation}
    and compute,
    \begin{equation}
        \begin{split}
            \partial _t I(t) & = \int (\partial _t u) f\\
            & = \int f \Delta u\\
            & = \int u \Delta f,
        \end{split}
    \end{equation}
    using integration by parts and noting the boundary terms are zero. Take \(t=0\),
    \begin{equation}
        \begin{split}
            \partial _t I(t) |_{t=0} & = \int u(0,x) \Delta f\\
            & = \int f \Delta f\\
            & = -\int \nabla f \cdot \nabla f\\
            & = -||\nabla f||_2^2,
        \end{split}
    \end{equation}
    again using integration by parts. If we define \(J(t)\), by replacing \(f\) with \(f^*\) in the definition of \(I(t)\), the problem reduces to proving,
    \begin{equation}
        \partial _t I(t) |_{t=0} \leq \partial _t J(t) |_{t=0}.
    \end{equation}
    This follows immediately from Riezs' rearrangement inequality,
    \begin{equation}
        \begin{split}
            \partial _t I(t) |_{t=0} & = \frac{1}{4\pi t} \int \int \Delta f(x) e^{-\frac{|x-y|^2}{4t}} f(y) \,dy \,dx\\
            & = \frac{1}{4\pi t} \int \Delta f(x) e^{-\frac{|x|^2}{4t}} * f(x) \,dx\\
            & \leq \frac{1}{4\pi t} \int \Delta f^*(x) e^{-\frac{|x|^2}{4t}} * f^*(x) \,dx\\
            & = \partial _t J(t) |_{t=0},
        \end{split}
    \end{equation}
    using that \(e^{x^2}\) is spherically symmetric and \((\Delta f(x))^* = \Delta f^*(x)\).
\end{proof}

\begin{corollary}
    Let \(f\) be a smooth non-negative function with compact support in \(\mathbb{R}^n\) and let \(u,v\) solve Laplace's equations as in Talenti's inequality. Then,
\end{corollary}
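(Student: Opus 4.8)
The plan is to read this off directly from Talenti's inequality, \(u^*(x)\le v(x)\), combined with the rearrangement identities established in the earlier sections. First I would observe that \(v\) is already symmetrically decreasing: \(f^*\) is radial and non-increasing, and convolving it against the spherically symmetric decreasing kernel \(|x|^{-(n-2)}\) produces a radial non-increasing function, so \(v=v^*\). Thus Talenti's inequality may be rewritten as \(u^*\le v^*\) pointwise almost everywhere.

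Next I would combine two facts proved above: that symmetric decreasing rearrangement preserves \(L^p\)-norms, and that the \(L^p\)-norm is monotone in its integrand. From \(u^*\le v\) pointwise we obtain \(||u^*||_{L^p(\mathbb{R}^n)}\le ||v||_{L^p(\mathbb{R}^n)}\) for every \(p\in[1,\infty]\), and since \(||u||_{L^p(\mathbb{R}^n)}=||u^*||_{L^p(\mathbb{R}^n)}\) this yields \(||u||_{L^p(\mathbb{R}^n)}\le ||v||_{L^p(\mathbb{R}^n)}\). The same monotonicity argument applied to any convex nondecreasing \(\Phi\) with \(\Phi(0)=0\) gives \(\int_{\mathbb{R}^n}\Phi(u)\,d\mu\le\int_{\mathbb{R}^n}\Phi(v)\,d\mu\); and if instead the intended conclusion is the Dirichlet-energy comparison, one integrates by parts to get \(||\nabla u||_2^2=\int_{\mathbb{R}^n}uf\,d\mu\), bounds \(\int uf\le\int u^*f^*\) by the Hardy--Littlewood inequality, and then \(\int u^*f^*\le\int vf^*=||\nabla v||_2^2\) by Talenti's inequality, so \(||\nabla u||_2\le ||\nabla v||_2\).

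The only genuine care needed is ensuring that \(u\) and \(v\) vanish at infinity, so that their symmetric decreasing rearrangements are defined; for \(n\ge 3\) this is the standard decay of the Newtonian potential of a compactly supported source, and the integrations by parts above are legitimate because \(f\in C_c^\infty(\mathbb{R}^n)\) and the potentials decay. Consequently the step most prone to error is this bookkeeping about admissibility of the rearrangement and justification of the boundary terms, rather than any new estimate — the substance of the corollary is entirely contained in quoting Talenti's inequality together with the norm-preservation and order-preservation results.
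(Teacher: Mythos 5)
Your Dirichlet-energy argument in the second paragraph is precisely the paper's proof: integrate by parts to get \(||\nabla u||_2^2 = \int uf\) and \(||\nabla v||_2^2 = \int vf^*\), bound \(\int uf \le \int u^*f^*\) by Hardy--Littlewood, and bound \(\int u^*f^* \le \int vf^*\) by Talenti's inequality. The intended conclusion (which was cut off in the statement you were shown) is indeed \(||\nabla u||_2 \le ||\nabla v||_2\), so the version you correctly identified and proved is exactly the one the paper establishes, by the same three steps.
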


\begin{equation}
    ||\nabla u||_2 \leq ||\nabla v||_2.
\end{equation}

\begin{proof}
    By definition we have,
    \begin{equation}
        \begin{split}
            ||\nabla u||_2^2 & = \int|\nabla u|^2\\
            & = - \int u\Delta u\\
            & = \int uf,
        \end{split}
    \end{equation}
    and,
    \begin{equation}
        ||\nabla v||_2^2 = \int vf^*.
    \end{equation}
    Applying Hardy-Littlewood inequality and then Talenti's inequality,
    \begin{equation}
        \begin{split}
            ||\nabla u||_2^2 & = \int uf\\
            & \leq \int u^* f^*\\
            & \leq \int vf^*\\
            & = ||\nabla v||_2^2.
        \end{split}
    \end{equation}
\end{proof}

\begin{thm}[Faber-Krahn Inequality]
    Let \(\Omega\) be an open set of finite volume in \(\mathbb{R}^n\) and \(\lambda _1 (\Omega)\) be the principal eigenvalue of the Dirichlet Laplacian on \(\Omega\). That is, the smallest value of \(\lambda\) for which the problem,
    \begin{equation}
        \left\{
        \begin{array}{ll}
              \Delta u = \lambda u & , \text{in } \Omega \\
              u=0 & , \text{on } \partial \Omega \\
        \end{array} 
        \right.
    \end{equation}
    has a non-trivial solution, for a given \(u\in H^1_0(\Omega)\). Then,
    \begin{equation}
        \lambda _1(\Omega) \geq \lambda _1 (\Omega ^*).
    \end{equation}
\end{thm}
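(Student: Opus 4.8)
The plan is to combine the variational (Rayleigh quotient) characterisation of the principal eigenvalue with the P\'{o}lya--Szeg\H{o} inequality and the $L^2$-invariance of rearrangement. Recall that for an open set $\Omega$ of finite volume the principal Dirichlet eigenvalue admits the characterisation
\[
\lambda_1(\Omega)=\inf\left\{\frac{\int_\Omega|\nabla u|^2\,dx}{\int_\Omega u^2\,dx}\ :\ u\in H^1_0(\Omega),\ u\not\equiv 0\right\},
\]
and since $\big\||\nabla|u|\,\big\|_2=\|\nabla u\|_2$ and $\big\||u|\big\|_2=\|u\|_2$ we may restrict the infimum to non-negative $u$. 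First I would take a minimising sequence $u_j\in H^1_0(\Omega)$ with $u_j\ge 0$ and $\|\nabla u_j\|_2^2/\|u_j\|_2^2\to\lambda_1(\Omega)$, and by density assume each $u_j\in C_c^\infty(\Omega)$, so that the co-area--based results of the previous subsections apply to it verbatim.

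Next I would rearrange each $u_j$. Its super-level sets $\{u_j>t\}$ lie in a compact subset of $\Omega$, so $\{u_j>t\}^*$ lies in the centred ball of volume at most $\text{Vol}(\Omega)$; hence $u_j^*$ is supported in a compact subset of $\Omega^*$ and, once the $H^1$ bound below is in hand, $u_j^*\in H^1_0(\Omega^*)$. The $L^p$-norm preservation lemma gives $\|u_j^*\|_2=\|u_j\|_2$, and the P\'{o}lya--Szeg\H{o} inequality gives $\|\nabla u_j^*\|_2\le\|\nabla u_j\|_2$. Therefore
\[
\lambda_1(\Omega^*)\ \le\ \frac{\int_{\Omega^*}|\nabla u_j^*|^2}{\int_{\Omega^*}(u_j^*)^2}\ \le\ \frac{\int_{\Omega}|\nabla u_j|^2}{\int_{\Omega}u_j^2},
\]
and letting $j\to\infty$ the right-hand side converges to $\lambda_1(\Omega)$, which gives $\lambda_1(\Omega^*)\le\lambda_1(\Omega)$. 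If one prefers to work with the eigenfunction directly, one invokes compactness of the embedding $H^1_0(\Omega)\hookrightarrow L^2(\Omega)$ for finite-volume $\Omega$ to obtain a minimiser, the maximum principle to take it non-negative, and applies the same three facts to it.

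The main obstacle is promoting the smooth P\'{o}lya--Szeg\H{o} inequality proved earlier into a form usable here, namely controlling $\|\nabla u_j^*\|_2$ and confirming $u_j^*\in H^1_0(\Omega^*)$. This is handled by an approximation argument: the non-expansivity of rearrangement in $L^2$ (the case $J(x)=x^2$ of the non-expansivity theorem) shows $u\mapsto u^*$ is continuous in $L^2$; the bound from P\'{o}lya--Szeg\H{o} makes $\{\nabla u_j^*\}$ bounded in $L^2$, so a subsequence converges weakly, its limit is the distributional gradient of the $L^2$-limit, and weak lower semicontinuity of the Dirichlet energy both upgrades the gradient inequality to the limit and shows the limiting rearrangement lies in $H^1$ with support in $\overline{\Omega^*}$. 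The auxiliary facts ($|u|\in H^1_0(\Omega)$ with unchanged norms, and positivity of the first eigenfunction in the alternative route) are standard and are the only places they enter.
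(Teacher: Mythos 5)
Your proposal is correct and follows essentially the same route as the paper: the Rayleigh--Ritz variational characterisation of $\lambda_1$, combined with the P\'{o}lya--Szeg\H{o} inequality to control $\|\nabla u^*\|_2$ and $L^2$-norm preservation under rearrangement to control the denominator. The paper applies this directly to the (assumed-to-exist) nonnegative normalised first eigenfunction, whereas you run it through a minimising sequence and only mention the eigenfunction route as an alternative; you also flesh out the technical points (membership of $u_j^*$ in $H^1_0(\Omega^*)$, the approximation argument) that the paper's terse proof leaves implicit.
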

\begin{proof}
    Follows straight from the Rayleigh-Ritz principle and the P\'{o}lya-Szeg\H{o} inequality. The Rayleigh-Ritz principle states, for \(\phi \in H^1_0(\Omega)\),
    \begin{equation}
        \lambda _1(\Omega) = \inf _{||\phi||_{L^2(\Omega)}=1} \int _{\Omega} |\nabla \phi|^2.
    \end{equation}
    Take \(\phi _1\) as the non-negative normalized minimizing eigenfunction corresponding to \(\lambda_1\), so \(\phi_1 \in  H^1_0(\Omega)\). Hence we can apply the P\'{o}lya-Szeg\H{o} inequality,
    \begin{equation}
        \begin{split}
            \lambda _1(\Omega) & = \int _{\Omega} |\nabla \phi_1|^2\\
            & \geq \int _{\Omega ^*} |\nabla \phi ^*_1|^2\\
            & \geq \inf _{||\phi||_{L^2(\Omega ^*)}=1} \int _{\Omega ^*} |\nabla \phi|^2\\
            & = \lambda _1 (\Omega ^*).
        \end{split}
    \end{equation}
\end{proof}

\subsection{3 Different Proofs of the Isoperimetric Inequality}\label{approx_proofs}

By approximating the perimeter of our surface in \(\mathbb{R}^n\) in different ways, we can deduce the isoperimetric inequality using previous theorems. 

\textbf{(1)} Let \(\phi_{\delta}: \mathbb{R}^n \rightarrow \mathbb{R}\) be a non-negative smooth function, increasing from 0 to 1 over a small strip of width \(\delta\), and approximate the perimeter by,
\begin{equation}
    \text{Per}(A) \approx \int_A |\nabla \phi_{\delta}| \,dx.
\end{equation}

This approximation explicitly means,
\begin{equation}
    \text{Per}(A) = \underset{\delta \rightarrow 0^+}{\text{lim}}\int |\nabla \phi_{\delta}| \,dx.
\end{equation}
\begin{proof}
    Assuming the P\'{o}lya-Szeg\H{o} Inequality,
    \begin{equation}
        \begin{split}
            \text{Per}(A) & \approx \int |\nabla \phi_{\delta}| \,dx\\
            & = ||\nabla \phi_{\delta}||_1\\
            & \geq ||\nabla \phi^*_{\delta}||_1\\
            &=  \int |\nabla \phi^*_{\delta}| \,dx\\
            & \approx \text{Per}(A^*).
        \end{split}
    \end{equation}
    Taking limits gives the exact result.
\end{proof}

\textbf{(2)} Let \(B_{\delta}\) be the centered ball of radius \(\delta\), and approximate the perimeter by,
\begin{equation}
    \text{Per}(A) \approx \frac{1}{\delta}\text{Vol}\left((A+B_{\delta}) \setminus A \right).
\end{equation}

\begin{proof}
    Assuming the Brunn-Minkowski inequality and noting Vol\((A)\) = Vol\(A^*)\),
    \begin{equation}
        \begin{split}
            \text{Vol}(A+B_{\delta})^{1/n} & \geq \text{Vol}(A)^{1/n} + \text{Vol}(B_{\delta})^{1/n}\\
            & = \text{Vol}(A^*)^{1/n} + \text{Vol}(B_{\delta})^{1/n}
        \end{split}
    \end{equation}
    then with equality in the Brunn-Minkowski inequality,
    \begin{equation}
        = \text{Vol}(A^*+B_{\delta})^{1/n}.
    \end{equation}
    Taking the n-th power of each side and subtracting Vol\((A)\) from either side,
    \begin{equation}
        \text{Vol}(A+B_{\delta}) - \text{Vol}(A) \geq \text{Vol}(A^*+B_{\delta}) - \text{Vol}(A^*).
    \end{equation}
    But,
    \begin{equation}
        \begin{split}
            \text{Per}(A) & \approx \frac{1}{\delta}\text{Vol}\left((A+B_{\delta}) \setminus A \right)\\
            & = \frac{1}{\delta}\bigg[\text{Vol}(A+B_{\delta}) - \text{Vol}(A)\bigg],\\
            & \geq \frac{1}{\delta}\bigg[\text{Vol}(A^*+B_{\delta}) - \text{Vol}(A^*)\bigg]\\
            & \approx \text{Per}(A^*).
        \end{split}
    \end{equation}
\end{proof}
Note, taking \(\delta \rightarrow 0\) recovers the previous limit definition of perimeter for deriving the sharp form of the Isoperimetric inequality.

\textbf{(3)} Approximate the perimeter by,
\begin{equation}
    \text{Per}(A) \approx \frac{C(n)}{\delta ^{n+1}} \int _A \mathcal{X}_{A^c} * \mathcal{X} _{B_{\delta}} (x) \,dx,
\end{equation}
where \(C(n) = \frac{n+1}{\omega _{n-1}}\).

\begin{proof}
    Assuming Riesz' inequality, and writing \(\mathcal{X}_{A^c} = 1 - \mathcal{X}_{A}\),
    \begin{equation}
        \begin{split}
            \text{Per}(A) & \approx \frac{C(n)}{\delta ^{n+1}} \int _A \mathcal{X}_{A^c} * \mathcal{X} _{B_{\delta}} (x) \,dx\\
            & = \frac{C(n)}{\delta ^{n+1}} \bigg[ \int_A \mathcal{X} _{B_{\delta}}(x) \,dx - \int _A \mathcal{X}_{A} * \mathcal{X} _{B_{\delta}} (x) \,dx \bigg],\\
            & \geq \frac{C(n)}{\delta ^{n+1}} \bigg[ \int_A \mathcal{X} _{B_{\delta}}(x) \,dx - \int _A \mathcal{X}_{A}^* * \mathcal{X}_{B_{\delta}}^* (x) \,dx \bigg],
        \end{split}
    \end{equation}
    then by results for rearrangements of characteristic functions, and as \((B_{\delta})^* = B_{\delta}\),
    \begin{equation}
        \begin{split}
            \text{Per}(A) & \geq \frac{C(n)}{\delta ^{n+1}} \bigg[ \int_A \mathcal{X} _{B_{\delta}}(x) \,dx - \int _A \mathcal{X}_{A^*} * \mathcal{X}_{B_{\delta}} (x) \,dx \bigg],\\
            & = \frac{C(n)}{\delta ^{n+1}} \int _A \mathcal{X}_{(A^*)^c} * \mathcal{X} _{B_{\delta}} (x) \,dx\\
            & \approx \text{Per}(A^*).
        \end{split}
    \end{equation}
\end{proof}

\section{Symmetric Rearrangement on Smooth Oriented Manifolds \(M^n = (0,\infty)\times \Sigma^{n-1}\)}\label{sec:mflds}

Now we need to build up some theory of Riemannian manifolds, so we can adapt symmetric rearrangement inequalities onto smooth oriented Riemannian manifolds \((M^n,g)\).

\subsection{Review of Tensors and Riemannian Metrics}

\subsubsection{Tensor Review}

Let \(V\) be a finite-dimensional vector space and \(V^*\) the dual space of covectors. Denote elements, \(\omega^i \in V^*, X_j\in V\).

\begin{defn}[\({k\choose l}\)-tensor]
    A \({k\choose l}\)-tensor is a multilinear map of the form,
\begin{equation}
    F: V^*\times \dots \times V^* \times V \times \dots \times V \rightarrow \mathbb{R},
\end{equation}
with \(k\)-copies of \(V\) and \(l\)-copies of \(V^*\). It is also called a \(k-\)covariant, \(l-\)contravariant tensor. The space of mixed \({k\choose l}\)-tensors is denoted \(T^k_l(V)\).
\end{defn}

\begin{defn}[Tensor Product]
    If \(F\in T^k_l(V)\) and \(G\in T^p_q(V)\), the tensor \(F\otimes G \in T^{k+p}_{l+q}(V)\) is called the tensor product of \(F\) and \(G\), defined by,
    \begin{equation}
        F\otimes G(\omega^1,\dots, \omega^{l+q},X_1,\dots, X_{k+p}) = F(\omega^1,\dots, \omega^{l},X_1,\dots, X_{k})G(\omega^{l+1},\dots, \omega^{l+q},X_{k+1},\dots, X_{k+p}).
    \end{equation} 
\end{defn}
If \((E_1,\dots ,E_n)\) is a basis for \(V\) and \((\varphi^1,\dots \varphi^n)\) denotes the dual basis for \(V^*\), i.e. \(\varphi^i (E_j) = \delta^i_j\), then a basis for \(T^k_l(V)\) is,
\begin{equation}
    E_{j_1}\otimes \dots \otimes E_{j_l} \otimes \varphi ^{i_1} \otimes \dots \otimes \varphi ^{i_k},
\end{equation}
where the indices \(i_p,j_q\) range from \(1\) to \(n\) and we can write any tensor \(F\in T^k_l(V)\) as,
\begin{equation}
    F = F^{j_1\dots j_l}_{i_1\dots i_k} E_{j_1}\otimes \dots \otimes E_{j_l} \otimes \varphi ^{i_1} \otimes \dots \otimes \varphi ^{i_k},
\end{equation}
where
\begin{equation}
    F^{j_1\dots j_l}_{i_1\dots i_k} = F(\varphi ^{j_1}, \dots,\varphi ^{j_l},E_{i_1},\dots ,E_{i_k}).
\end{equation}
More succinctly, a \(T^k_l(M)\) tensor can be written in the form \(V^{\otimes l}\otimes (V^*)^{\otimes k}\).

If we consider the tangent space \(V=T_p M\) for some \(p\in M\), and if \((x^i)\) are local coordinates on \(U\subset M\), and \(p\in U\), then we have a basis \(\{\partial _i\}\) and dual basis \(\{dx^i\}\) of the cotangent space \(T^*_p(M)\). Then using the above formulae for \(F\in T^k_l(T_p M)\),
\begin{equation}
    F = F^{j_1\dots j_l}_{i_1\dots i_k} \partial _{j_1} \otimes \dots \otimes \partial _{j_l} \otimes dx^{i_1} \otimes dx^{i_k}.
\end{equation}

\begin{defn}[Bundle of \({k\choose l}\)-tensors on \(M\)]
    Define the bundle of \({k\choose l}\)-tensors on \(M\) as the disjoint union,
    \begin{equation}
        T^k_l M :=  \bigsqcup_{p\in M} T^k_l (T_p M).
    \end{equation}
\end{defn}

Let \(\pi :E\rightarrow M\), be a vector bundle over \(M\) where \(E\) and \(M\) are smooth manifolds and the projection \(\pi\) is a surjective map by definition. 

\begin{defn}[Smooth Section of a Vector Bundle]
    A section of a vector bundle is a map, \(F:M\rightarrow E\), such that \(\pi \circ F = Id_M\) or equivalently, \(F(p)\in E_p := \pi^{-1}(p)\). A section is smooth if it smooth as a map of manifolds, that is if and only if the components \(F^{j_1\dots j_l}_{i_1\dots i_k}\) of \(F\) depend smoothly on \(p\in U\), in any smooth local frame \(\{E_i\}\).
\end{defn}
The space of smooth sections on a vector bundle \(\pi: E\rightarrow  M\) on a smooth manifold \(M\) is denoted by \(\Gamma (E,M)\).

\begin{defn}[Tensor Field]
    A tensor field on \(M\) is a smooth section of some tensor bundle \(T^k_l M\).
\end{defn}
Hence we can say a \({k\choose l}\)-tensor field is a section \(T\in \Gamma \left(T^k_lM,M\right)\). The space of \({k\choose l}\)-tensor fields is denoted by \(\mathfrak{T}^k_l(M)\). 

\begin{defn}[Riemannian metric]
A \textit{Riemannian metric} on a smooth manifold \(M\) is a \(2\)-tensor field, \(g\in \mathfrak{T}^2(M)\) such that,

\begin{itemize}
    \item \(g\) is symmetric, \(g(X,Y)=g(Y,X)\), \(\forall X,Y\in T_pM\) and,
    \item \(g(X,X)\geq 0\) \(\forall X \in T_pM\),
    \item \(g(X,X)=0 \iff X=0\).
\end{itemize}
\end{defn}
This is an inner product on each tangent space, \(\langle X,Y \rangle := g(X,Y)\). The pair, \((M,g)\) is a \textit{Riemannian manifold} and \(g\) always exists using a partitions of unity argument.
\newline 
If \((\partial_1,\dots ,\partial_n)\) is a local frame of \(TM\), and \((dx^1,\dots, dx)\) its dual coframe of \(T^*M\) , \(g\) can be written locally as,
\begin{equation}
    g = g_{ij} dx^i \otimes dx^j, 
\end{equation}
with the coefficient matrix, \(g_{ij} = \langle \partial_i, \partial_j \rangle\). Using the symmetric product of two 1-forms, \(\omega \eta := \frac{1}{2} \left( \omega \otimes \eta + \eta \otimes \omega \right)\), this can be shortened to just,
\begin{equation}
    g = g_{ij} dx^i dx^j.
\end{equation}

Consider raising and lowering indices. Given a metric \(g\) on \(M\), define a map called \textit{flat} from \(TM\) to \(T^*M\) by mapping a vector \(X\) to the covector \(X^{\flat}\), defined by,
\begin{equation}
    X^{\flat}(Y) := g(X,Y)
\end{equation}
and in local coordinates,
\begin{equation}
    \begin{split}
        X^{\flat} & = g(X^i\partial _i, \cdot)\\
        & = g_{ij}X^i dx^j\\
        & = X_j dx^j.
    \end{split}
\end{equation}
Hence \(X^{\flat}\) is obtained from \(X\) by lowering an index and the matrix of flat is just the matrix of \(g\). Denote the inverse of flat by a map called \textit{sharp}, \(\omega \mapsto \omega ^{\#}\). In coordinates,
\begin{equation}
    \omega ^i := g^{ij}\omega _j,
\end{equation}
where \(g^{ij}\) are the components of the inverse matrix \((g_{ij})^{-1}\). Now we can define the gradient of a function \(f\) on a Riemannian manifold \((M,g)\), as the vector field grad\(f:= df^{\#}\), which is obtained from \(df\) by raising an index. The gradient is characterised from the fact \(df(Y) = \langle \text{grad}f,Y\rangle, \forall Y\in TM\). Then locally,
\begin{equation}
    \text{grad}f = g^{ij}\partial_i f \partial _j.    
\end{equation}




\subsection{Riemannian Volume Form and Integration on Riemannian Manifolds}

\begin{defn}[Frame and Coframe]
    A local frame for \(TM\) is a set of vector fields, \(\{E_i\}_{i=1}^n\), defined on an open subset \(U\subset M\), such that evaluating at \(p\in U\) gives a basis for \(T_pM\). In local coordinates \((x^1,\dots,x^n)\), we can write,
    \begin{equation}
        E_i(p) = X_i^j \frac{\partial}{\partial x^j}\bigg| _p,
    \end{equation}
    where \(X_i^j\) are matrix coefficients for these coordinates. Similarly, a local coframe for \(T^*M\) is a set of covector fields \(\{\varphi^i\}_{i=1}^n\), , defined on an open subset \(U\subset M\), such that evaluating at \(p\in U\) gives a basis for \(T^*_pM\). Locally,
    \begin{equation}
        \varphi^i(p) = Y^i_j dx^j\big|_p.
    \end{equation}
\end{defn}

\begin{lemma}[Riemannian Volume Form]\label{RVF}
    Given an oriented Riemannian \(n\)-manifold \((M,g)\), there exists a unique \(n\)-form \(dV_g\) characterised by any of the following equivalent properties:
    \newline
    a)  If \((\varphi^1,\dots,\varphi^n)\) is a local oriented orthonormal coframe for  \(T^*M\), then
    \begin{equation}
        dV_g = \varphi^1 \wedge \dots \wedge \varphi^n, 
    \end{equation}
    b)  If \((E_1,\dots,E_n)\) is a local oriented orthonormal frame for  \(TM\), then
    \begin{equation}
        dV_g(E_1,\dots,E_n) = 1,
    \end{equation}
    c) In oriented local coordinates \((x^1,\dots,x^n)\),
    \begin{equation}
        dV_g = \sqrt{\text{det}(g_{ij})} dx^1 \wedge \dots \wedge dx^n,
    \end{equation}
    where \(g_{ij} = \langle \partial_i,\partial_j \rangle\) are the coefficients of the Riemannian metric \(g\).
\end{lemma}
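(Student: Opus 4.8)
The plan is to prove the three stated properties are equivalent as \emph{local} descriptions of an \(n\)-form, then use property (c) to patch together a global form and property (b) to get uniqueness.

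First I would show (a) \(\iff\) (b): an alternating \(n\)-form on an \(n\)-dimensional vector space is determined by its value on a single basis, and if \((\varphi^i)\) is the coframe dual to \((E_i)\) then \(\varphi^1 \wedge \dots \wedge \varphi^n(E_1,\dots,E_n) = \det(\varphi^i(E_j)) = \det(\mathrm{Id}) = 1\), so the two normalizations coincide. Next I would relate these to (c). Given oriented local coordinates \((x^i)\) on \(U\), apply Gram--Schmidt to the coordinate frame \((\partial_i)\) to obtain a local oriented orthonormal frame \((E_j)\) on \(U\), and write \(\partial_i = A_i^{\,j} E_j\). Since \((E_j)\) is orthonormal, \(g_{ij} = \langle \partial_i,\partial_j\rangle = \sum_k A_i^{\,k} A_j^{\,k} = (AA^{\mathsf T})_{ij}\), hence \(\det(g_{ij}) = (\det A)^2\); because both frames are positively oriented, \(\det A > 0\), so \(\det A = \sqrt{\det(g_{ij})}\). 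Then multilinearity and alternation give \(dV_g(\partial_1,\dots,\partial_n) = (\det A)\, dV_g(E_1,\dots,E_n) = \sqrt{\det(g_{ij})}\), and since \(dx^1 \wedge \dots \wedge dx^n\) is the unique \(n\)-form taking the value \(1\) on \((\partial_1,\dots,\partial_n)\), this forces \(dV_g = \sqrt{\det(g_{ij})}\, dx^1 \wedge \dots \wedge dx^n\) on \(U\); conversely this formula defines an \(n\)-form on \(U\) satisfying (b).

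For existence, I would cover \(M\) by oriented coordinate charts \(\{(U_\alpha, x_\alpha)\}\) and define \(\omega_\alpha := \sqrt{\det(g^\alpha_{ij})}\, dx_\alpha^1 \wedge \dots \wedge dx_\alpha^n\) on each. Each \(\omega_\alpha\) is smooth, since \((g^\alpha_{ij})\) is smooth and positive-definite so its determinant is a smooth positive function. On an overlap \(U_\alpha \cap U_\beta\), the transition matrix between any two oriented orthonormal frames lies in \(SO(n)\), and \(\varphi^1 \wedge \dots \wedge \varphi^n\) is multiplied by its determinant, namely \(1\), under such a change of frame; equivalently, both \(\omega_\alpha\) and \(\omega_\beta\) take the value \(1\) on any oriented orthonormal frame by the computation above, so \(\omega_\alpha = \omega_\beta\) on the overlap. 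Hence the \(\omega_\alpha\) glue to a well-defined global smooth \(n\)-form \(dV_g\) satisfying (a), (b) and (c) by construction. Uniqueness is immediate: an \(n\)-form at \(p \in M\) is determined by its value on a single basis of \(T_pM\), so any \(n\)-form satisfying (b) on every oriented orthonormal frame must agree with \(dV_g\) at every point.

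The only genuinely delicate point is the gluing step, and there the key is that orientation-compatibility of the charts forces the change-of-frame matrices between oriented orthonormal frames to have determinant \(+1\) rather than merely \(\pm 1\); without this one obtains only a density, not a form. The remaining ingredients — the Gram--Schmidt construction of a local orthonormal frame, smoothness of \(\sqrt{\det(g_{ij})}\), and the determinant bookkeeping in the change of basis — are routine linear algebra and cause no trouble.
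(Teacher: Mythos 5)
Your proof is correct and takes essentially the same approach as the paper: both prove the equivalence of the three characterizations via the transition matrix $A$ between the coordinate frame and an oriented orthonormal frame, using $g_{ij} = (AA^{\mathsf T})_{ij}$ to get $\det A = \sqrt{\det(g_{ij})}$ and orientation to fix the sign. The only cosmetic difference is in how global well-definedness is phrased — the paper checks directly that $\sqrt{\det(g_{ij})}\,dx^1\wedge\dots\wedge dx^n$ transforms correctly under an oriented coordinate change using the Jacobian determinant, while you glue local pieces by observing they all evaluate to $1$ on oriented orthonormal frames; these are two standard and interchangeable ways of packaging the same observation.
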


\begin{proof}
   Define \(dV_g\) by the equation in c). Check that this definition is well-defined, that is, it is independent of choice of local oriented coordinates. Consider \((x^i)\) and \((\tilde{x}^i)\) are local oriented coordinates of \(M\). Transform to the \((\tilde{x})^i\) frame, using Einstein's summation notation, by,
    \begin{equation}
        d\tilde{x}^a = \frac{\partial \tilde{x}^a}{\partial x^i}dx^i,
    \end{equation}
    and so the \(n-\)wedge product transforms to,
    \begin{equation}
        \begin{split}
            d\tilde{x}^1\wedge \dots \wedge d\tilde{x}^n & = \bigg(\frac{\partial \tilde{x}^1}{\partial x^{i_1}}dx^{i_1} \bigg) \wedge \dots \wedge \bigg(\frac{\partial \tilde{x}^n}{\partial x^{i_n}}dx^{i_n} \bigg)\\
            & = \text{det}\bigg(\frac{\partial \tilde{x}^a}{\partial x^i}\bigg) dx^1\wedge \dots \wedge dx^n.
        \end{split}
    \end{equation}
    Then the Riemannian metric is,
    \begin{equation}
        \begin{split}
            g & = g_{ij}dx^i\otimes dx^j\\
            & = \tilde{g}_{ab} d\tilde{x}^a\otimes d\tilde{x}^b\\
            & = \tilde{g}_{ab} \bigg(\frac{\partial \tilde{x}^a}{\partial x^i} dx^i \bigg) \otimes \bigg(\frac{\partial \tilde{x}^b}{\partial x^j} dx^j \bigg)\\
            & = \bigg( \frac{\partial \tilde{x}^a}{\partial x^i} \tilde{g}_{ab} \frac{\partial \tilde{x}^b}{\partial x^j} \bigg) dx^i \otimes dx^j,
        \end{split}
    \end{equation}
    and so \(g_{ij} = \frac{\partial \tilde{x}^a}{\partial x^i} \tilde{g}_{ab} \frac{\partial \tilde{x}^b}{\partial x^j}\). Taking determinants,
    \begin{equation}
        \text{det}(g_{ij}) = \text{det}(\tilde{g}_{ab}) \bigg(\text{det}\bigg(\frac{\partial \tilde{x}^c}{\partial x^k}\bigg) \bigg)^2,
    \end{equation}
    and so we get,
    \begin{equation}
        \begin{split}
            \sqrt{\text{det}(\tilde{g}_{ab})} d\tilde{x}^1 \wedge \dots \wedge d\tilde{x}^n & = \frac{\sqrt{\text{det}(g_{ij})}}{\big|\text{det}\big(\frac{\partial \tilde{x}^c}{\partial x^k}\big) \big|} \text{det}\big(\frac{\partial \tilde{x}^c}{\partial x^k}\big) dx^1 \wedge \dots \wedge dx^n\\
            & = \sqrt{\text{det}(g_{ij})} dx^1 \wedge \dots \wedge dx^n,
        \end{split}
    \end{equation}
    as \((x^i)\) and \((\tilde{x}^j)\) are locally oriented coordinates.
    Now to prove the equivalence of the 3 statements. 
    \newline
    The matrix representation of \(g\) wrt \((E_1,\dots ,E_n)\) is the identity matrix, so we have b). The matrix representation of \(g\) wrt \((\varphi^1,\dots ,\varphi^n)\) is \((g_{ij})\), as defined before. Let \(A_p\) be the transition matrix between these two frames of \(T_pM\). We have, by b), with local coordinates \(x^1,\dots,x^n\),
    \begin{equation}
        dV_g(\partial_1,\dots,\partial_n)_p = \text{det}A_p \cdot dV_g(E_1,\dots,E_n) = \text{det}A_p,
    \end{equation}
    or
    \begin{equation}
        (dV_g)_p = \text{det}A_p dx^1\wedge \dots \wedge dx^n.
    \end{equation}
    
   By construction,
   \begin{equation}
       (g_{ij})_p = A_p^T \cdot I \cdot A_p = A_p^T A_p,
   \end{equation}
   so
   \begin{equation}
       \text{det}A_p = \sqrt{\text{det}(A^T_pA_p)} = \sqrt{\text{det}(g_{ij})}
   \end{equation}
    and we get,
    \begin{equation}
        dV_g = \sqrt{\text{det}(g_{ij})} dx^1 \wedge \dots \wedge dx^n,
    \end{equation}
    which is c). We can reverse all the arguments, so we have the equivalence of the 3 statements.
\end{proof}
The \(n\)-form \(dV_g\) is called the \textit{Riemannian volume form}. Now we can integrate functions on \(M\), not just differential forms. If \(f\) is a smooth, compactly supported function on an oriented Riemannian \(n-\)fold manifold \((M,g)\), then \(fdV_g\) is a compactly supported \(n-\)form, so the integral \(\int_M fdV_g\) is well-defined. The volume of \(M\) is defined to be \(\int_M dV_g = \int_M 1 dV_g\). Note that if \(M\) is compact, then all functions on it are compactly supported.

Let \(\Sigma^k \subset M^n\) be an embedded submanifold and \(\iota :\Sigma^k\xhookrightarrow{} (M^n,g)\) the inclusion map, which is an embedding. This induces a Riemannian metric on \(\Sigma\), via a pullback: \(g_{\Sigma}= \iota^* g\). Assuming \(\Sigma\) and \(M\) are both oriented, \(g_{\Sigma}\) has the volume form,
\begin{equation}
    dV_{g_{\Sigma}} = dV_{\Sigma} = dV_{\iota^*g}.
\end{equation}
This is called the \(k-\)dimensional Hausdorff measure, \(d\mathcal{H}^k\), and allows us to integrate over (sub)-manifolds over smaller dimension than the dimension of \(M\), as follows.
\begin{equation}
    \int_{\Sigma} d\mathcal{H}^k = \int_{\Sigma}dV_{\iota^*g }
\end{equation}
It may be useful to denote \(dV_{\iota^*g }\) as \(dA_{\iota^*g }\), to note it is a volume of a lower-dimensional space.

Next we need a few definitions so that we can prove when a hypersurface in an oriented Riemannian manifold is itself orientable, which boils down to the existence of a well-defined global continuous unit normal vector field on the hypersurface. For intuition, you can think of a 2-dimensional plane embedded in \(\mathbb{R}^3\). The plane is fully determined by the span of 2 orthonormal tangent vectors lying on the plane and the area form is the projection of the volume form of \(\mathbb{R}^3\) onto the plane, where \(\mathbb{R}^3\) is fully determined by the 2 orthonormal tangent vectors and a third normal vector to the plane. 

\begin{defn}[Smooth Hypersurface]
    A hypersurface is a manifold of dimension \(n-1\), which is embedded in an ambient space of dimension \(n\). A smooth hypersurface is a hypersurface that is a smooth manifold. 
\end{defn}
Particularly we can consider a hypersurface \(\Sigma ^{n-1}\), as above, embedded in \(M^n\), as a codimension 1 submanifold.

Define the \textit{interior multiplication} of an alternating \(k-\)tensor, \(\omega \in \bigwedge^k(V^*)\) by vector \(v\in V\), by the map, \(\omega \rightarrow v \lrcorner \omega\), where \(\lrcorner\) is defined by,
\begin{equation}
    (v\lrcorner \omega)(w_1,\dots,w_{k-1}) = \omega (v,w_1,\dots,w_{k-1}).
\end{equation}
Note that this holds for a vector field and differential forms.
\begin{defn}[Slice Charts]
    Suppose \(n>k\) and write \(\mathbb{R}^n = \mathbb{R}^k\times \mathbb{R}^{n-k}\). Given an open subset \(U_p\subset M\) and for a fixed \(p\in U_p\), there exists a chart of \(M\),
    \begin{equation}
        (\tilde{U},\tilde{\phi}) \text{ such that } (\tilde{\phi})^{-1}(\mathbb{R}^k\times \{0\}) = \tilde{U}\cap U_p,
    \end{equation}
    then \(U_p\) is a \(k-\)dimensional submanifold of \(M\) and \((\tilde{U},\tilde{\phi})\) is called a slice chart. Particularly, if \(M\) has local coordinates, \((x^1,\dots,x^n)\),  where \(x^{k+1},\dots ,x^n\) vanish on \(U_p\), and the slice chart  \((\tilde{U},\tilde{\phi})\) of  \(M\) has local coordinates \((x^1,\dots,x^k,t^1,\dots,t^{n-k})\), where each \(t^i\) are constants in \(\mathbb{R}\). Note that if all \(t^i=0\), this is just \(U_p\subset \mathbb{R}^k\) canonically embedded in \(\mathbb{R}^n\).
\end{defn}

\begin{defn}[Global Continuous Normal Vector Field]
        Let \((M,g)\) be a Riemannian manifold and \(S\) a submanifold.            Then the map defined by, 
        \begin{equation}
            N:S\rightarrow TM, p\mapsto N(p)\in T_p M,
        \end{equation}
        that satisfies,
        \begin{equation}
            N(p)\in (T_pS)^{\perp}\setminus \{0\}, \hspace{1mm}\forall p \in M,
        \end{equation}
        is called a normal vector field, where the orthogonal complement is taken with respect to \(g\). \(N\) is continuous at \(p\in S\) if for a chart \((\mathcal{U},\phi)\) of \(M\), where \(p\in \mathcal{U}\), \(N\) can be written in the form,
        \begin{equation}
            N(x) = \sum_i a_i(x) \partial^{\phi}_i(x),
        \end{equation}
        where \(a_i:\mathcal{U}\rightarrow \mathbb{R}\) are continuous functions and \(\{\partial_i^{\phi}\}\) forms a basis of the tangent space, \(T_xM\). \(N\) is a continuous unit normal vector field if additionally it satisfies,
        \begin{equation}
            ||N(x)|| = \sqrt{g(N(x),N(x))} = 1, \forall x\in S.
        \end{equation}
        \(N\) is a global normal vector field if it is well-defined on the whole manifold \(M\).
\end{defn}

Now for the orientability condition. 

\begin{prop}[Orientability Condition]
    Suppose \(M\) is a hypersurface in an oriented Riemannian manifold \((\tilde{M},\tilde{g})\) and \(g\) is the induced metric on \(M\). Then \(M\) is orientable if and only if there exists a global continuous unit normal vector field \(N\) for \(M\), and in that case, the volume form of \((M,g)\) is,
    \begin{equation}
        dV_g = (N\lrcorner dV_{\tilde{g}})|_M.
    \end{equation}
\end{prop}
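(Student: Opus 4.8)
The plan is to prove the two directions of the equivalence separately, and then produce the formula for the volume form as a byproduct of the ``if'' direction. Throughout I would work locally in slice charts for the hypersurface $M\subset\tilde M$, using the induced metric $g=\iota^*\tilde g$ and the orthogonal splitting $T_p\tilde M = T_pM \oplus (T_pM)^\perp$ afforded by $\tilde g$, so that at each $p\in M$ the normal line $(T_pM)^\perp$ is one-dimensional and carries exactly two unit vectors.

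\textbf{Existence of $N$ implies orientability.} Suppose a global continuous unit normal field $N$ exists. I would define, for each $p\in M$, an orientation of $T_pM$ by declaring an ordered basis $(E_1,\dots,E_{n-1})$ of $T_pM$ to be positively oriented precisely when $(N(p),E_1,\dots,E_{n-1})$ is a positively oriented basis of $T_p\tilde M$ with respect to the given orientation of $\tilde M$. One checks this is independent of the chosen basis (any two bases with the same orientation differ by a matrix of positive determinant, which extends to a block matrix of the same determinant on $T_p\tilde M$), so it is a genuine pointwise orientation. Continuity of $N$ together with continuity of the ambient orientation shows this pointwise orientation varies continuously, hence defines an orientation of $M$; equivalently, the $(n-1)$-form $(N\lrcorner\, dV_{\tilde g})|_M$ is nowhere vanishing on $M$ (because $N$ is nowhere tangent to $M$, feeding it into the nondegenerate top form $dV_{\tilde g}$ and then restricting to $T_pM$ gives a nonzero alternating $(n-1)$-form), and a manifold admitting a nowhere-vanishing top form is orientable. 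To get the precise formula $dV_g = (N\lrcorner\, dV_{\tilde g})|_M$ rather than just proportionality, I would evaluate both sides on a local oriented $\tilde g$-orthonormal frame $(N,E_1,\dots,E_{n-1})$ adapted to $M$: the right-hand side gives $dV_{\tilde g}(N,E_1,\dots,E_{n-1})=1$ by Lemma~\ref{RVF}(b), and $(E_1,\dots,E_{n-1})$ is an oriented orthonormal frame for $(M,g)$ by the orientation convention above, so the left-hand side is also $1$ by Lemma~\ref{RVF}(b) applied on $M$. Since both are $(n-1)$-forms on an $(n-1)$-manifold agreeing on one frame, they agree.

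\textbf{Orientability implies existence of $N$.} Conversely, suppose $M$ is orientable, with a chosen orientation. For each $p$, the line $(T_pM)^\perp$ has two unit vectors; I would select the one, call it $N(p)$, for which $(N(p),E_1,\dots,E_{n-1})$ is positively oriented in $\tilde M$ whenever $(E_1,\dots,E_{n-1})$ is positively oriented in $M$ (again well-defined independent of the basis). This defines $N$ as a set-theoretic section; the content is continuity (indeed smoothness). To see it, work in an oriented slice chart $(x^1,\dots,x^n)$ for $M\subset\tilde M$ in which $M=\{x^n=0\}$: the vectors $\partial_1,\dots,\partial_{n-1}$ are a (not necessarily orthonormal) oriented frame for $TM$, and one can produce the unit normal by Gram--Schmidt applied to $(\partial_1,\dots,\partial_{n-1},\partial_n)$, or more explicitly by $\tilde g$-orthogonally projecting $\partial_n$ off $T_pM$ and normalizing, with the sign fixed by the orientation convention; all operations involved (inverting the Gram matrix $(\tilde g_{ij})_{i,j\le n-1}$, square roots of positive quantities) depend smoothly on $p$, so the $a_i(x)$ in the required expansion $N(x)=\sum_i a_i(x)\partial_i^\phi(x)$ are smooth, hence continuous. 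Finally one checks the local choices agree on overlaps: on $M$ the normal line is intrinsic, and the sign condition is determined by the global orientation of $M$ and the global orientation of $\tilde M$, both independent of chart, so the locally defined $N$'s patch to a global continuous unit normal field.

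\textbf{Main obstacle.} The routine-looking but genuinely load-bearing step is the continuity/smoothness of $N$ in the ``orientability $\Rightarrow$ existence'' direction: one must check that the sign selection, which a priori is only a pointwise algebraic choice, does not jump, and this is where the slice-chart description and the explicit Gram--Schmidt (or projection) formula are needed to exhibit $N$ in the coordinate form demanded by the definition of a continuous normal field. The orientation bookkeeping --- verifying that ``$(N,E_1,\dots,E_{n-1})$ oriented'' is basis-independent and compatible across charts --- is the other place where care is required, though it is more conceptual than computational. Once both directions are in hand, the volume-form identity is immediate from evaluating on an adapted orthonormal frame as indicated above.
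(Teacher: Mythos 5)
Your proposal is correct, and the overall skeleton matches the paper's: work in slice charts, split $T_p\tilde M = T_pM \oplus (T_pM)^\perp$, and use the local orthonormal-frame characterisation of $dV_g$ from Lemma~\ref{RVF}. Where you diverge is in the execution, and in both places your route is leaner.

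For ``orientability $\Rightarrow$ existence of $N$,'' the paper starts from $\pm\partial_t$ in a slice chart, glues these local choices with a partition of unity into a global vector field, and only afterward subtracts off the tangential part $N_\parallel = \sum_{i,j}\langle N,\partial_i\rangle\partial_i g^{ij}$ and normalises; you instead go straight to the $\tilde g$-orthogonal line $(T_pM)^\perp$ and pick the unit vector compatible with the orientations, checking continuity by exhibiting it via projection/Gram--Schmidt in a slice chart. Same content, but you avoid the partition-of-unity step (which the paper needs just to manufacture a global candidate before projecting) and the attendant nonvanishing check. For the formula $dV_g = (N\lrcorner\, dV_{\tilde g})|_M$, the paper's main argument is a coordinate computation expanding $\det(\tilde g_{ij})$ along the last column to get $\sqrt{\det(\tilde g_{ij})/\det(g_{ij})} = \|\partial_t\|$, with the orthonormal-frame evaluation mentioned only as an afterthought; you make the orthonormal-frame evaluation the whole argument, which is shorter and frame-independent (it does require that the $E_i$ can be taken simultaneously orthonormal for both $g$ and $\tilde g$, which holds since $g=\iota^*\tilde g$ --- you implicitly use this and should say so). For ``existence of $N$ $\Rightarrow$ orientability,'' the paper orients each slice chart by hand and verifies compatibility on overlaps via positive Jacobian determinants; you instead observe that $(N\lrcorner\, dV_{\tilde g})|_M$ is a nowhere-vanishing top form on $M$, which gives the orientation in one line. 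Both are correct; yours trades the paper's explicit chart-transition bookkeeping for the standard ``nowhere-vanishing form'' criterion, which is the cleaner abstraction. The only thing I'd ask you to sharpen is the sign bookkeeping in the Gram--Schmidt step: ``with the sign fixed by the orientation convention'' is right but you should spell out that on a connected slice chart the sign is constant (continuity plus the two-valued choice), so the local formulas genuinely patch.
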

\begin{proof}
    (\(\implies\)) Assume \(M\) is orientable and for \(p\in M\) let \(\mathcal{U}_p\) with coordinates, \((x^1,\dots,x^{n-1},t)\), be a slice chart of \(\tilde{M}\), where \(t=0\) corresponds to \(M\) (as an embedded submanifold of \(\tilde{M}\)).  Then as the differential preserves orientation, we can pick on \(\mathcal{U}_p\) either \(N_{\mathcal{U}}=\partial _t\) or \(N_{\mathcal{U}}=-\partial _t\), for a given connected component, depending on whichever makes \(\{\partial_1,\dots,\partial_{n-1},\partial_t\}\) positively oriented in the connected component of \(\tilde{M}\), which is determined by the sign of the orientation chosen on \(\tilde{M}\). For every point \(p\in M\), it exists in some connected component of \(M\), and we can make this choice of orientation, so \(N_M\) is globally defined, where the union of connected components cover \(M\). We can choose a partition of unity \(\{\theta_i\}_{i\in \mathcal{I}}\) subordinate to the cover of \(M\), \(\{\mathcal{U}_p\}_{p\in M}\), then if we write the cover as \(\{\mathcal{U}_{p_i}\}_{i\in \mathcal{I}}\) of \(M\), for \(p_i \in M\), with a countable index set \(\mathcal{I}\) and connected open subsets \(\mathcal{U}_{p_i}\), then,
    \begin{equation}
        N = \sum_i \theta_i N_{\mathcal{U}_{p_i}},
    \end{equation}
    is a global vector field on \(M\), with \(N_{\mathcal{U}_{p_i}}\) as the choice of normal vector field on the connected component \(\{\mathcal{U}_{p_i}\}\). Note that this makes \(N\) non-zero, as \(\forall i \in \mathcal{I}, \exists p_i\) such that, \(\text{supp}(\theta_i) \subset \mathcal{U}_{p_i}\), from the definition of partitions of unity.
    \newline
    \(N\) is clearly continuous and by the definition of coordinates, \(\pm\partial_t\) is not in the span of all the \(\partial_i\), so \(\pm \partial_t \notin TM\). Hence we must have \(\pm \partial_t\) non-zero in \(T\tilde{M}|_M \setminus TM\).  \(N\) is not necessarily normal to \(TM\) with respect to the metric \(g\), i.e. \(N(p)\notin (T_pM)^{\perp}\setminus \{0\}, \forall p\in M\). 
    
    To find a normal vector field we must subtract off the parallel part of \(N\) and check this is non-zero and normal to the \((\partial _i)\)'s. 
    Define,
    \begin{equation}
        N_{\parallel} := \sum_{i,j} \langle N,\partial_i\rangle \partial_i g^{ij} \in \text{Span}(\partial_i) \text{ and } \tilde{N}:= N - N_{\parallel}.
    \end{equation}
    Check \(N_{\parallel}\) is well-defined. Change to coordinates \((\tilde{x}^i)\), with,
    \begin{equation}
        \tilde{\partial_i} = \frac{\partial x^j}{\partial\tilde{x}^i}\frac{\partial}{\partial x^j} \hspace{3mm} \text{ and } \tilde{g}^{ij} = g^{\alpha \beta} \frac{\partial x_{\alpha}}{\partial\tilde{x}^i} \frac{\partial x_{\beta}}{\partial\tilde{x}^j},
    \end{equation}
    to get,
    \begin{equation}
        \begin{split}
            \sum_{i,j} \langle N,\tilde{\partial_i}\rangle \tilde{\partial_i} \tilde{g}^{ij} & = \sum_{i,j} \langle N,\frac{\partial x^j}{\partial\tilde{x}^i} \frac{\partial}{\partial x^j}\rangle \tilde{\partial_i} g^{\alpha \beta} \frac{\partial x_{\alpha}}{\partial\tilde{x}^i} \frac{\partial x_{\beta}}{\partial\tilde{x}^j}\\
            & = \sum_{i,j} \langle N, \frac{\partial}{\partial x^j}\rangle \tilde{\partial_i} g^{\alpha \beta} \frac{\partial x^j}{\partial\tilde{x}^i}\frac{\partial x_{\alpha}}{\partial\tilde{x}^i} \frac{\partial x_{\beta}}{\partial\tilde{x}^j}\\ 
            & = \sum_{i,j} \langle N, \frac{\partial}{\partial x^j}\rangle \tilde{\partial_i} g^{\alpha \beta} \delta_{\alpha i}\delta_{\beta j}\\
            & = \sum_{i,j} \langle N, \partial_i\rangle \partial_i g^{ij}\\
            & = N_{\parallel}.
        \end{split}
    \end{equation}

    Then for a fixed \(i\in\{1,\dots,n-1\}\), assuming \((\partial_i)\) is an orthonormal basis,
    \begin{equation}
        \begin{split}
            \langle \tilde{N}, \partial_i\rangle & = \langle N, \partial_i\rangle - \bigg{\langle} \sum_{j,k} \langle N,\partial_j\rangle\partial_j g^{jk}, \partial_i\bigg{\rangle}\\
            & = \langle N, \partial_i\rangle - \sum_{j,k} \langle N,\partial_j\rangle\langle \partial_j, \partial_i\rangle g^{jk}\\
            & = \langle N, \partial_i\rangle - \sum_{j,k} \langle N,\partial_j\rangle \delta_{ij} g^{jk}\\
            & = \langle N, \partial_i\rangle - \sum_{k} \langle N,\partial_i\rangle g^{ik} \\
            & = \langle N, \partial_i\rangle - \sum_{k} \langle N,\partial_i\rangle \delta_{ik}\\
            & = (1-g^{ii})\langle \partial_t, \partial_i\rangle = 0,
        \end{split}
    \end{equation}
    since we have scaled so \(g^{ii} = \langle \partial_i,\partial_i\rangle = 1\) and so \(N\) is normal to every \(\partial_i\). If we don't have an orthonormal basis, we need to have at least a unit length basis of tangent vectors, so for each \(i=1,\dots ,n-1\), \(\langle \partial_i,\partial_i\rangle = 1\).
    \newline
    Finally to check the unit property, we need to instead consider the vector field \(V(x) := \frac{\tilde{N}(x)}{||\tilde{N}(x)||}\). Check that \(\tilde{N}(x)\neq 0 \hspace{2mm} \forall x \in M\). We have,
    \begin{equation}
        \begin{split}
            \tilde{N}= 0 & \iff N = N_{\parallel}\\
            & \iff N = \sum_{i,j} \langle N, \partial_i\rangle \partial_i g^{ij}\\
            & \iff N = \pm \partial_t \text{ is linearly dependent on } \{\partial_1,\dots,\partial_{n-1}\},
        \end{split}
    \end{equation}
    then \(\partial_t\) would not be a basis vector of \(T\tilde{M}\).
    
    In coordinates, \(x=(x^1,\dots,x^{n-1},t)\),
    \begin{equation}
        dx^i\otimes dx^j(V(x),V(x)) = dx^i\bigg(\frac{\partial_t}{||\partial_t||}\bigg)\otimes dx^j\bigg(\frac{\partial_t}{||\partial_t||}\bigg) = 1, 
    \end{equation}
    where the subscript \(t\) corresponds to the fixed \(n\)-th coordinate. In this basis \(g_{ij}= \langle\frac{\partial_i}{||\partial_i||},\frac{\partial_j}{||\partial_j||}\rangle\), so
    \begin{equation}
        \begin{split}
            ||V(x)||^2 & = g(V(x),V(x))\\
            & = g_{ij}dx^i\otimes dx^j(V(x),V(x))\\
            & =\frac{\partial_i\cdot \partial_j}{||\partial_i||\cdot ||\partial_j||}\\
            & = \frac{||\partial_i||\cdot ||\partial_j|| \cos{\frac{\pi}{2}}}{||\partial_i||\cdot ||\partial_j||}\\
            &= 1,
        \end{split}
    \end{equation}
    where we have assumed \((\partial_i)\) is an orthonormal basis.
    Then the volume form on \(\tilde{M}\) is, noting \(\text{det}(g_{ij})\neq 0\) by definition,
    \begin{equation}
       dV_{\tilde{g}} = \sqrt{\text{det}(\tilde{g}_{ij})} dx^1\wedge \dots \wedge dx^{n-1}\wedge dt = \sqrt{\frac{\text{det}(\tilde{g}_{ij})}{\text{det}(g_{ij})}} dV_g|_M \wedge dt.
    \end{equation}
    Use \(V = V(x^1,\dots,x^{n-1},t)\),
    \begin{equation}
        \begin{split}
            (V\lrcorner dV_{\tilde{g}})(x^1,\dots,x^{n-1})|_M 
            & = dV_{\tilde{g}}(V,x^1,\dots,x^{n-1})|_M\\
            & = \sqrt{\frac{\text{det}(\tilde{g}_{ij})}{\text{det}(g_{ij})}} dV_g (x^1,\dots,x^{n-1})\wedge dt(V)|_M \\
            & = \sqrt{\frac{\text{det}(\tilde{g}_{ij})}{\text{det}(g_{ij})}} dV_g (x^1,\dots,x^{n-1}),
        \end{split}
    \end{equation}
using that \(dt(V) = dt(\frac{\partial_t}{||\partial_t||}) = \frac{1}{||\partial_t||}\). Using the chosen coordinates, notice that,

\[ \tilde{g}_{ij} =
\begin{pmatrix}
  \biggij
  & \rvline & \begin{matrix}
  \partial_1 \cdot \partial_t \\
  \dots \\
  \partial_{n-1} \cdot \partial_t
  \end{matrix} \\
\hline
  \begin{matrix}
  \partial_t \cdot \partial_1 & \dots & \partial_t \cdot \partial_{n-1}
  \end{matrix} & \rvline &
  \begin{matrix}
  \partial_t \cdot \partial_t
  \end{matrix} \\
\end{pmatrix}.
\] Denote \(\tilde{G}_{i,j}\) to be the minor of \(\tilde{g}_{ij}\), by removing row \(i\) and column \(j\) and taking the determinant. Expanding along the final column,

\begin{equation}
    \text{det}(\tilde{g}_{ij}) = \partial_1 \cdot \partial_t \tilde{G}_{1,t} + \partial_2 \cdot \partial_t \tilde{G}_{2,t} + \dots + \partial_{n-1} \cdot \partial_t \tilde{G}_{n-1,t} + \partial_t \cdot \partial_t \tilde{G}_{t,t}.
\end{equation}
This means every \(\tilde{G}_{i,j}=0\) (see \autoref{sec:app_B}) apart from \(\tilde{G}_{t,t}=\text{det}(g_{ij})\), which immediately gives \(\text{det}(\tilde{g}_{ij}) = \partial_t \cdot \partial_t \text{det}(g_{ij}) = ||\partial _t||^2 \text{det}(g_{ij})\), and,
\begin{equation}
    \sqrt{\frac{\text{det}(\tilde{g}_{ij})}{\text{det}(g_{ij})}} = ||\partial _t||,
\end{equation}
cancelling with the \(\frac{1}{||\partial_t||}\), to give the result,
\begin{equation}
    (V\lrcorner dV_{\tilde{g}})(x^1,\dots,x^{n-1})|_M = dV_g (x^1,\dots,x^{n-1}).
\end{equation}
This can also be realised by noting,
\begin{equation}
    N\lrcorner dV_{\tilde{g}} (E_1,\dots,E_{n-1}) = 1,
\end{equation}
whenever \((E_1,\dots,E_{n-1})\) is an oriented orthonormal frame of \(TM\), so \((N,E_1,\dots,E_{n-1})\) is an oriented orthonormal frame of \(T\tilde{M}\), which can be shown to by extending to an open neighbourhood of the point. Hence we get by the equivalence of conditions in \ref{RVF},

\begin{equation}
    N\lrcorner dV_{\tilde{g}} (E_1,\dots, E_{n-1}) = dV_{\tilde{g}}(N,E_1,\dots,E_{n-1}) = 1
\end{equation}
and \(N\lrcorner dV_{\tilde{g}} = dV_g\).

(\(\impliedby\)) Given a global unit normal vector field, \(N\), use the same slice chart as before around \(p\in M\). Now given \(N\), orient \(M\cap \mathcal{U}_p\) by declaring the ordered basis \(\{\partial_1,\dots,\partial_{n-1}\}\) to be positively oriented on \(M\cap \mathcal{U}_p\) if and only if the ordered basis \(\{\partial_1,\dots,\partial_{n-1},\partial_t\}\) is positively oriented on \(\tilde{M}\cap \tilde{\mathcal{U}}_{p}\), where \(\tilde{\mathcal{U}}_{p}\) has coordinates \((x^1,\dots,x^{n-1},t)\) when \(\mathcal{U}_p\) has coordinates \((x^1,\dots ,x^{n-1})\).

Now check this works on any intersections, \(\mathcal{U}_p\cap \mathcal{U}_q\), with \(p,q\in M\). 

Since \(\tilde{M}\) is orientable, we have, in local oriented coordinates, \((x^1,\dots,x^{n-1},t)\), the Riemannian volume form,
\begin{equation}
    dV_{\tilde{g}} = \sqrt{\text{det}(\tilde{g}_{ij})}dx^1\wedge \dots \wedge dx^{n-1}\wedge dt = (N\lrcorner dV_{\tilde{g}})|_M,
\end{equation}
by assumption. Now, again since \(\tilde{M}\) is orientable, there exists a cover \(\{\tilde{\mathcal{U}}_{\alpha}\}\) of \(\tilde{M}\), such that \(\text{det}\bigg(\frac{\partial y^{\beta}}{\partial x^{\alpha}} \bigg) >0\).  Hence for any \({\mathcal{U}}_{\alpha}, {\mathcal{U}}_{\beta}\) , such that \({\mathcal{U}}_{\alpha}\cap {\mathcal{U}}_{\beta} \neq \emptyset\), which implies \(\tilde{\mathcal{U}}_{\alpha}\cap \tilde{\mathcal{U}}_{\beta} \neq \emptyset\),
\begin{equation}
    \begin{split}
        dV_{\tilde{g}}|_{\mathcal{U}_{\alpha}} & = (N\lrcorner dV_{\tilde{g}})|_{\tilde{\mathcal{U}}_{\alpha}}\\
        & = \text{det}\bigg(\frac{\partial y^{\beta }}{\partial x^{\alpha }} \bigg) (N\lrcorner dV_{\tilde{g}})|_{\tilde{\mathcal{U}}_{\beta}}\\
        & = \text{det}\bigg(\frac{\partial y^{\beta}}{\partial x^{\alpha }} \bigg)dV_{\tilde{g}}|_{\mathcal{U}_{\beta}}.
    \end{split}
\end{equation}
    Then, the LHS is \(>0 \iff\) the RHS is \(>0\), so it holds over intersections.
\end{proof}

\begin{prop}[The Riemannian Density]
    If \((M,g)\) is any Riemannian manifold, then there is a unique smooth positive density \(\mu\) on \(M\), called the Riemannian density, with the property that,
    \begin{equation}
        \mu (E_1,\dots,E_n)=1,         
    \end{equation}

    for every local orthonormal frame \((E_i)\). Particularly, for the coframe \((\varphi^1,\dots,\varphi^n)\), \(\mu = |\varphi^1 \wedge \dots \wedge \varphi^n|\).
\end{prop}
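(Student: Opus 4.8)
The plan is to mimic the construction of the Riemannian volume form in Lemma \ref{RVF}, replacing $n$-forms by densities so that no orientation is needed. Recall that a density on $M$ assigns to each $p\in M$ a function $\mu_p$ on ordered bases of $T_pM$ obeying the transformation rule $\mu_p(v_1,\dots,v_n)=|\det A|\,\mu_p(w_1,\dots,w_n)$ whenever $v_i=A_i^{\,j}w_j$; it is \emph{positive} if $\mu_p>0$ on every basis and \emph{smooth} if its component in any smooth local frame is a smooth function. The one feature distinguishing this from a volume form is that the transformation law carries $|\det A|$ rather than $\det A$, and this is exactly what makes the construction work in the non-orientable case.

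First I would build $\mu$ locally and check it is well defined. On a neighbourhood $U$ carrying a local orthonormal frame $(E_i)$, set $\mu(E_1,\dots,E_n):=1$ and extend to every basis of $T_qM$, $q\in U$, by the transformation rule. If $(\tilde E_i)$ is another orthonormal frame on $U$, then $\tilde E_i=O_i^{\,j}E_j$ with $O(q)\in O(n)$, and since $|\det O|=1$ the prescribed value $1$ is unchanged; this is precisely the step that fails for volume forms, where one instead needs $\det O=1$ and hence an orientation, and it is the heart of the argument. For smoothness and the coordinate formula, pick local coordinates $(x^i)$ on $U$ and write $E_i=A_i^{\,j}\partial_j$; orthonormality gives $\delta_{kl}=A_k^{\,i}A_l^{\,j}g_{ij}$, i.e.\ $I=A\,(g_{ij})\,A^{T}$, so $\det(g_{ij})=(\det A)^{-2}$ and $|\det A^{-1}|=\sqrt{\det(g_{ij})}$, exactly as in the proof of Lemma \ref{RVF}. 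Hence $\mu(\partial_1,\dots,\partial_n)=|\det A^{-1}|=\sqrt{\det(g_{ij})}$, so on $U$ we have $\mu=\sqrt{\det(g_{ij})}\,|dx^1\wedge\dots\wedge dx^n|$, which is smooth and positive because $(g_{ij})$ is smooth and positive definite.

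To pass from local to global, I would observe that on an overlap of two charts the component functions transform by the factor $\bigl|\det(\partial\tilde x^a/\partial x^i)\bigr|$ — the same determinant computation as in Lemma \ref{RVF}, but now with absolute values, so no positivity of the Jacobian (hence no compatible orientation) is required — which is precisely the transition rule a density must obey. Therefore the local pieces $\sqrt{\det(g_{ij})}\,|dx^1\wedge\dots\wedge dx^n|$ agree as densities on overlaps and glue to a single global smooth positive density $\mu$, which by construction satisfies $\mu(E_1,\dots,E_n)=1$ on every local orthonormal frame.

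Uniqueness is then immediate: two densities agreeing on every local orthonormal frame agree on one basis at each point, hence on all bases by the transformation rule, hence as densities. For the final identification, $|\varphi^1\wedge\dots\wedge\varphi^n|$ is by definition the density whose value on a frame $(F_j)$ is $|\det(\varphi^i(F_j))|$; evaluating on the orthonormal frame $(E_j)$ dual to the orthonormal coframe $(\varphi^i)$ gives $|\det(\delta^i_j)|=1$, so $|\varphi^1\wedge\dots\wedge\varphi^n|$ satisfies the characterising property and therefore equals $\mu$. The only genuine obstacle in the whole argument is the well-definedness step, and it rests entirely on the fact that $O(n)$ lies in $\{|\det|=1\}$; all remaining steps are a routine transcription of the volume-form argument with absolute values inserted.
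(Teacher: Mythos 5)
The paper states this proposition without proof, so there is no argument of the author's own to compare against. Your proof is correct, and it is precisely the density-analogue of the paper's proof of Lemma~\ref{RVF} for the Riemannian volume form: you define $\mu$ locally by declaring it to be $1$ on an orthonormal frame, observe that consistency across orthonormal frames requires only $|\det O|=1$ for $O\in O(n)$ (where the volume-form version needs $\det O=1$ and hence an orientation), derive the coordinate expression $\sqrt{\det(g_{ij})}\,|dx^1\wedge\dots\wedge dx^n|$ from $I=A(g_{ij})A^{T}$, and glue using the absolute-value transition rule $\bigl|\det(\partial\tilde x^a/\partial x^i)\bigr|$ that a density obeys by definition. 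Every step is sound; the uniqueness and the identification $\mu=|\varphi^1\wedge\dots\wedge\varphi^n|$ both follow by evaluating on a single orthonormal frame, as you say. The one thing worth stating explicitly — though you clearly understand it — is that in the local construction you should first observe that an orthonormal local frame exists on any sufficiently small neighbourhood (e.g.\ by applying Gram--Schmidt to a coordinate frame); the rest of the argument then goes through exactly as written.
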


\subsection{Symmetric Decreasing Rearrangement Inequalities on \(M^n\)}

Now we have a well-defined definition of integrating \(f:M\rightarrow \mathbb{R}\), when \(f\) is a smooth and compactly supported function on our oriented Riemannian manifold \((M^n=(0,\infty)\times \Sigma^{n-1},g)\). We can consider the \(L^p\)-norm of \(f\), 
\begin{equation}
    ||f||_p = \bigg(\int _M |f|^p dV_g\bigg)^{1/p}.
\end{equation}
Then the \(L^p(\Omega)\) space for \(\Omega \subset M\), is defined as the space of functions \(f:\Omega \rightarrow \mathbb{R}\), such that \(||f||_p < \infty\).
Let \(f\) be a smooth, compactly supported function on an oriented Riemannian \(n-\)fold manifold \((M,g)\), then using the definition of volume on a manifold, define the distribution function as before,
\begin{equation}
    \mu_f(t) = \text{Vol}_{g}\left(\{x:f(x)>t\} \right) = \int_{\{f>t\}} \,dV_g.
\end{equation}

The formula for the area of a ball of radius \(x\in \mathbb{R}^n\) is,
\begin{equation}
    f(|x|) = \omega_n |x|^n = \int_{(0,|x|)\times S^{n-1}} r^{n-1} drdA.
\end{equation}
We can use this idea in the manifold setting, by considering the integral,
\begin{equation}
    \int_{(0,r)\times \Sigma} \omega,
\end{equation}
where \(r>0\), \(S^{n-1}\leftrightarrow \Sigma^{n-1}\) and \(\omega \leftrightarrow r^{n-1} drdA\). We have already proven the following theorem for the case of subsets of \(\mathbb{R}^n\), so if \(M\) can be embedded in some \(\mathbb{R}^N\), then the following already holds. Otherwise, it can be proven similarly with the new manifold definitions. 

\begin{thm}[Rearrangement of level sets is equivalent to level sets of rearrangement]
    The two sets are equal, \(\{x\in M: f^*(x) > t\} = \{x\in M: f(x) >t\}^*\).
\end{thm}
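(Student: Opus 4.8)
The plan is to repeat the proof of Theorem~\ref{main_thm} almost verbatim, with three substitutions: Lebesgue measure is replaced by the Borel measure $\mu$ on $M$ given by $\mu(A)=\int_A\omega$ (equivalently by $dV_g$); centred balls are replaced by the sets $(0,r)\times\Sigma$; and the radial quantity $\omega_n|y|^n$ is replaced by $V(r_y):=\int_{(0,r_y)\times\Sigma}\omega$, where a point $y\in M$ is written $y=(r_y,\sigma_y)\in(0,\infty)\times\Sigma$. Before imitating the argument I would first record the elementary facts that legitimise these substitutions: (i) since $\omega$ is a positive $n$-form (locally $\omega=\varphi\,dx^1\wedge\cdots\wedge dx^n$ with $\varphi>0$), the map $r\mapsto V(r)$ is continuous and strictly increasing, and by hypothesis $\lim_{r\to0^+}V(r)=0$; hence $V$ is a homeomorphism of $(0,\infty)$ onto its range of values, which is exactly what makes $A^*=(0,V^{-1}(\mu(A)))\times\Sigma$ well defined; (ii) rearrangement is monotone under inclusion, since $A\subset B$ forces $\mu(A)\le\mu(B)$ and hence $A^*\subset B^*$; (iii) $y=(r_y,\sigma_y)$ lies in $(0,\rho)\times\Sigma$ if and only if $V(r_y)<V(\rho)$; and (iv) $\mu$ is a genuine positive, countably additive measure, so Fatou's lemma and continuity from below apply to it.

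Granting these, the forward inclusion $\{f>t\}^*\subset\{f^*>t\}$ proceeds exactly as in Theorem~\ref{main_thm}. Fix $t>0$ and $y\in\{f>t\}^*$, so $V(r_y)<\mu(\{f>t\})$ by~(iii). Decompose the layer-cake integral as $f^*(y)=\int_0^t\mathcal{X}_{\{f>s\}^*}(y)\,ds+\int_t^\infty\mathcal{X}_{\{f>s\}^*}(y)\,ds$. For $s\in(0,t)$ we have $\{f>t\}\subset\{f>s\}$, so by~(ii) $y\in\{f>t\}^*\subset\{f>s\}^*$ and the first integral equals $t$. If the second integral vanished, then, using~(ii) to upgrade ``almost every $s$'' to ``every $s>t$'', we would have $V(r_y)\ge\mu(\{f>s\})$ for all $s>t$; letting $s\to t^+$ and applying Fatou's lemma to $\mu$ along $\liminf_{s\to t^+}\{f>s\}=\{f\ge t\}$ gives $V(r_y)\ge\mu(\{f\ge t\})\ge\mu(\{f>t\})$, contradicting $y\in\{f>t\}^*$. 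Hence the second integral is strictly positive and $f^*(y)>t$.

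For the reverse inclusion I would fix $y\notin\{f>t\}^*$. For every $s\ge t$ we have $\{f>s\}\subset\{f>t\}$, so $\{f>s\}^*\subset\{f>t\}^*$ by~(ii), whence $\mathcal{X}_{\{f>s\}^*}(y)=0$; therefore $f^*(y)=\int_0^t\mathcal{X}_{\{f>s\}^*}(y)\,ds\le t$, i.e.\ $y\notin\{f^*>t\}$. Taking complements in $M$ yields $\{f^*>t\}\subset\{f>t\}^*$, and the two inclusions finish the proof.

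The only genuinely new content relative to the $\mathbb{R}^n$ case is item~(i): one must check that the volume profile $V(r)=\int_{(0,r)\times\Sigma}\omega$ is a continuous strictly increasing bijection onto the attainable volumes, since this is precisely what makes $A^*$ well defined and makes step~(iii) usable, and this is where the standing hypotheses $\varphi>0$ and $\lim_{r\to0}\int_{(0,r)\times\Sigma}\omega=0$ are used. I expect that verification, together with the minor book-keeping point of promoting ``almost every $s$'' to ``every $s>t$'' in the Fatou step (handled by the monotonicity in~(ii)), to be the only places requiring care. I would also remark that the tempting shortcut of embedding $M\hookrightarrow\mathbb{R}^N$ and simply quoting Theorem~\ref{main_thm} does \emph{not} apply directly, because the rearranged sets $(0,r_*)\times\Sigma$ are not balls in $\mathbb{R}^N$ and $\mu$ is not the ambient Lebesgue measure; the self-contained repetition above is the cleaner route, and nothing in the $\mathbb{R}^n$ argument used any feature of Lebesgue measure beyond what (i)--(iv) supply.
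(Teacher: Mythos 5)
Your proposal follows essentially the same route as the paper's own proof: both mimic the $\mathbb{R}^n$ argument of Theorem~\ref{main_thm} by splitting the layer-cake integral at $t$, observing the first piece equals $t$ via monotonicity of rearrangement under inclusion, deriving a contradiction from vanishing of the second piece by passing to a limit $s\to t^+$ in the inequality $\int_{(0,r_y)\times\Sigma}\omega\ge\int_{\{f>s\}}\omega$, and handling the reverse inclusion exactly as in $\mathbb{R}^n$. Your version is a bit more scrupulous in recording the facts (i)--(iv) that license the transfer, particularly that $r\mapsto V(r)$ is a continuous strictly increasing function (which is what makes $A^*$ and step~(iii) well defined) and the promotion of ``a.e.\ $s$'' to ``every $s>t$'' via monotonicity, but the substance is identical to the paper.
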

\begin{proof}
    The aim is to mimic the proof from the \(\mathbb{R}^n\) case and adapt it using the manifold setting definitions. Fix a \(t>0\) and \(y\in \{x:f(x)>t\}^* =\{f>t\}^*\subset M\). Write the layer-cake decomposition of \(f^*\) and split as before,
    \begin{equation}
        \begin{split}
            f^*(y) & = \int_0^{\infty} \mathcal{X} _{\{x:f(x)>s\}^*} (y) ds\\
            & = \int_0^t \mathcal{X} _{\{x:f(x)>s\}^*} (y) ds + \int_t^{\infty} \mathcal{X} _{\{x:f(x)>s\}^*} (y) ds.
        \end{split}
    \end{equation}
    The first integral is again just \(t\). Suppose for a contradiction that \(\int_t^{\infty} \mathcal{X} _{\{x:f(x)>s\}^*} (y) ds = 0\), so \(\forall s > t,  y\notin \{f>s\}^*\). Then using the formula above, taking \(y=(r_y,\sigma _y)\in (0,\infty )\times \Sigma = M\) and \(\omega\) as the volume form we have,
    \begin{equation}
        |\{f>s\}| = \int_{\{f>s\}} \omega.
    \end{equation}
    For all \(s>t\) we have \(y\notin \{f>s\}^* = (0,r_*(s))\times \Sigma\) which implies, \(r_y \geq r_*(s), \forall s>t\). Then using the rearrangement of subsets of \(M\),
    \begin{equation}
        \begin{split}
            \int_{(0,r_y)\times \Sigma} \omega & \geq \int_{(0,r_*(s))\times \Sigma} \omega \\
            & = \int_{\{f>s\}^*} \omega\\
            & = \int_{\{f>s\}} \omega\\
            & \geq \limsup_{s\rightarrow t} \int_{\{f>s\}} \omega\\
            & = \int_{\{f>t\}} \omega,
        \end{split}
    \end{equation}
    which is a contradiction and implies \(\{f>t\}^* \subset \{f^*>t\}\) as before. The reverse inclusion follows the same as the \(\mathbb{R}^n\) case.
\end{proof}

With this theorem proven, the following corollary follow exactly as before in the \(\mathbb{R}^n\) case.

\begin{corollary}
    For any \(t>0, \mu _f(t) = \mu _{f^*}(t)\).
\end{corollary}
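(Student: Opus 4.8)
The plan is to chain the theorem just proved with the defining property of the symmetric rearrangement of a set, so the argument is essentially the same as in the $\mathbb{R}^n$ case (cf. equation \ref{dist}). Fix $t > 0$. Unwinding definitions, $\mu_{f^*}(t) = \mathrm{Vol}_g(\{x \in M : f^*(x) > t\}) = \int_{\{f^* > t\}} dV_g$ and $\mu_f(t) = \int_{\{f > t\}} dV_g$. The theorem ``Rearrangement of level sets is equivalent to level sets of rearrangement'' supplies the set identity $\{f^* > t\} = \{f > t\}^*$, so it suffices to prove $\int_{\{f>t\}^*} dV_g = \int_{\{f>t\}} dV_g$.

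This last equality is precisely the defining property of the map $A \mapsto A^*$ on open subsets of $M$, applied to $A = \{f > t\}$ and with the chosen positive $n$-form taken to be the Riemannian volume form, $\omega = dV_g$: by construction $A^*$ is the unique set of the form $(0, r_*) \times \Sigma$ with $\int_{A^*} \omega = \int_A \omega < \infty$. Before invoking this I would record two routine checks: first, that $\{f > t\}$ has finite $dV_g$-volume (immediate since $f$ is smooth with compact support, or more generally since $f$ vanishes at infinity), so that $\{f>t\}^*$ is defined; and second, that the standing assumption $\lim_{r \to 0} \int_{(0,r)\times\Sigma} \omega = 0$ together with monotonicity of $r \mapsto \int_{(0,r)\times\Sigma}\omega$ makes the radius $r_*$ uniquely determined. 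Combining, $\mu_{f^*}(t) = \int_{\{f>t\}^*} dV_g = \int_{\{f>t\}} dV_g = \mu_f(t)$.

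There is no real obstacle here; the corollary is a formal consequence of the preceding theorem and the volume-preserving nature of set rearrangement. The only point worth stating explicitly is the bookkeeping identification that the $n$-form $\omega$ used in the definition of the symmetric rearrangement of a set is, throughout this section, the Riemannian volume form $dV_g$, so that ``$\int_{A^*}\omega = \int_A\omega$'' literally reads ``$\mathrm{Vol}_g(A^*) = \mathrm{Vol}_g(A)$''. Once that is in place, equimeasurability of $f$ and $f^*$ follows at once, exactly as it does on $\mathbb{R}^n$, and it is this fact that later lets us reduce inequalities for $f$ and $f^*$ to statements about their rearranged level sets.
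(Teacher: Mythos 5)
Your argument is correct and matches the paper's implicit reasoning: the paper states that this corollary "follows exactly as before in the $\mathbb{R}^n$ case," i.e.\ by chaining the set identity $\{f^*>t\}=\{f>t\}^*$ from the preceding theorem with the volume-preserving defining property of $A\mapsto A^*$, which is precisely what you do. Your explicit remark that the $n$-form $\omega$ in the definition of set rearrangement must be taken to be $dV_g$ is a useful bookkeeping point that the paper leaves tacit.
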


\begin{lemma}[Symmetric Decreasing Rearrangement Preserves \(L^p\)-norms]
    For all non-negative \(f\in L^p(M)\), \(1\leq p \leq \infty\), with local oriented coordinates \((x^i)\) of \(M\),
    \begin{equation}
        ||f(x)||_p = ||f^*(x)||_p, \hspace{3mm} \forall x\in  M,
    \end{equation}
    where \(x = (x^1,\dots,x^n)\).
\end{lemma}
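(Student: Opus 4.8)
The plan is to mirror the Euclidean argument, replacing the Lebesgue measure by the Borel measure $\mu_g$ on $M$ obtained by integrating against the Riemannian volume form $dV_g$ (equivalently, against the Riemannian density). Since $dV_g$ is a smooth positive density, $\mu_g$ is a genuine positive Borel measure and $(M,\mathcal{B}(M),\mu_g)$ is a measure space, so Cavalieri's Principle (Proposition \ref{cav}) and its Corollary \ref{decomp} apply verbatim with $\Omega = M$ and $\mu = \mu_g$. First I would note that for $f\in L^p(M)$ with $p<\infty$, Chebyshev's inequality forces $\mu_f(t)<\infty$ for every $t>0$, so each super-level set $\{f>t\}$ has finite volume, its symmetric rearrangement $\{f>t\}^*$ is defined, and hence $f^*$ is well-defined by its layer-cake decomposition.

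For $1\le p<\infty$ the argument is then immediate: applying Corollary \ref{decomp} to $f$ and to $f^*$ gives
\[
\|f\|_p^p = p\int_0^\infty \mu_f(t)\,t^{p-1}\,dt
\qquad\text{and}\qquad
\|f^*\|_p^p = p\int_0^\infty \mu_{f^*}(t)\,t^{p-1}\,dt,
\]
and the preceding corollary, $\mu_f(t)=\mu_{f^*}(t)$ for all $t>0$, shows the two right-hand integrals agree, whence $\|f\|_p=\|f^*\|_p$. One could equally substitute $t=s^p$ in $\|f\|_p^p=\int_0^\infty \mu_g(\{f^p>t\})\,dt$ as in the $\mathbb{R}^n$ proof; the substance is that the $L^p$-norm sees $f$ only through its distribution function, which is a rearrangement invariant.

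For $p=\infty$ I would instead use the characterisation $\|f\|_\infty = \inf\{t\ge 0 : \mu_f(t)=0\}$, valid for any $L^\infty$ function whose super-level sets have finite measure; equimeasurability then gives $\inf\{t\ge 0:\mu_f(t)=0\}=\inf\{t\ge 0:\mu_{f^*}(t)=0\}$, i.e. $\|f\|_\infty=\|f^*\|_\infty$. The work here is not in the algebra but in the measure-theoretic bookkeeping: one must confirm that integration against $dV_g$ really does produce a Borel measure that is $\sigma$-finite on the relevant sets and for which the Fubini--Tonelli interchange underlying Cavalieri's Principle is legitimate, and one must make sure in the $p=\infty$ case that the standing hypothesis guaranteeing the existence of $f^*$ — finiteness of $\mu_f(t)$ for $t>0$ — is actually in force, since it is automatic only when $p<\infty$. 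These points granted, the proof transfers with no further changes.
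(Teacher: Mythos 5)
Your proof is correct and takes essentially the same route as the paper: equimeasurability of $f$ and $f^*$ (from the rearrangement-of-level-sets theorem) combined with Cavalieri's Principle, transferred to $(M,\mathcal{B}(M),\mu_g)$ via the Riemannian volume form. Your explicit treatment of $p=\infty$ and your observation that the standing hypothesis $\mu_f(t)<\infty$ for all $t>0$ — needed for $f^*$ to be well-defined at all — is automatic only when $p<\infty$ are both good catches that the paper glosses over when it says the manifold case ``follows exactly as before.''
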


\begin{corollary}[Symmetric Decreasing Rearrangement is Order-Preserving]
\end{corollary}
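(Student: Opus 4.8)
The plan is to reduce this statement about functions to the corresponding monotonicity statement about the symmetric rearrangement of \emph{sets}, exactly as in the $\mathbb{R}^n$ case, and then to integrate the layer-cake decomposition. First I would record the key auxiliary fact: if $A\subseteq B$ are open subsets of $M$ with $\int_B\omega<\infty$, then $A^*\subseteq B^*$. To see this, write $A^*=(0,r_A)\times\Sigma$ and $B^*=(0,r_B)\times\Sigma$ and consider the function $\Phi(r):=\int_{(0,r)\times\Sigma}\omega$. Since $\omega$ evaluates positively on every positively oriented frame and $(0,r)\times\Sigma\subseteq(0,r')\times\Sigma$ whenever $r\le r'$, the function $\Phi$ is non-decreasing; together with the standing hypothesis $\lim_{r\to 0}\Phi(r)=0$ this makes $\Phi$ the ``cumulative distribution'' determining the rearrangement. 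From $A\subseteq B$ we get $\Phi(r_A)=\int_{A^*}\omega=\int_A\omega\le\int_B\omega=\int_{B^*}\omega=\Phi(r_B)$, hence $r_A\le r_B$ by monotonicity of $\Phi$, and therefore $A^*\subseteq B^*$.

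Next, since $f$ is smooth, each superlevel set $\{f>t\}$ is open, and for $t>0$ it is contained in the (compact) support of $f$, so it has finite $\omega$-measure and its rearrangement $\{f>t\}^*$ is well-defined; the same holds for $g$. The hypothesis $f(x)\le g(x)$ for all $x\in M$ gives $\{y:f(y)>t\}\subseteq\{y:g(y)>t\}$ for every $t>0$, so by the auxiliary fact $\{y:f(y)>t\}^*\subseteq\{y:g(y)>t\}^*$, and consequently $\mathcal{X}_{\{y:f(y)>t\}^*}(x)\le\mathcal{X}_{\{y:g(y)>t\}^*}(x)$ for every $x\in M$ and every $t>0$.

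Finally I would integrate in $t$: using the definition of the symmetric decreasing rearrangement on $M$,
\begin{equation}
    f^*(x)=\int_0^{\infty}\mathcal{X}_{\{y:f(y)>t\}^*}(x)\,dt\le\int_0^{\infty}\mathcal{X}_{\{y:g(y)>t\}^*}(x)\,dt=g^*(x),
\end{equation}
for every $x\in M$, which is the claim. The only step that is not completely formal is the monotonicity of $\Phi$ — that is, that enlarging a set enlarges its rearrangement — but this is immediate from the positivity of $\omega$ and the nested structure of the cylinders $(0,r)\times\Sigma$, so I expect no genuine obstacle: the manifold proof is a verbatim transcription of the Euclidean one with $\text{Vol}$ replaced by $\int(\cdot)\,\omega$ and centred balls replaced by the sets $(0,r)\times\Sigma$.
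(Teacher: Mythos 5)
Your proof is correct and follows the same approach the paper takes in the $\mathbb{R}^n$ case: reduce the inclusion of superlevel sets to the monotonicity of the rearrangement operation on sets, then integrate the layer-cake decomposition in $t$. One small wording point: you describe $\Phi(r)=\int_{(0,r)\times\Sigma}\omega$ as merely non-decreasing, but the inference $\Phi(r_A)\le\Phi(r_B)\Rightarrow r_A\le r_B$ needs strict monotonicity, which in fact holds immediately because $\omega>0$ gives $\int_{(r,r')\times\Sigma}\omega>0$ whenever $r<r'$ — the same fact the paper's Appendix uses to prove uniqueness of $A^*$.
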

\begin{equation}
    f(x)\leq g(x) \hspace{1mm} \forall x\in M \implies f^*(x)\leq g^*(x) \hspace{1mm} \forall x\in M.
\end{equation}

\begin{thm}[Hardy-Littlewood]
    Let \(g\) be of the same form as \(f\), with local oriented coordinates \((x^i)\) of \(M\), then,
    \begin{equation}
        \int_M f(x)g(x) \,dV_g \leq \int_M f^*(x)g^*(x) \,dV_g,
    \end{equation}
    with \(x = (x^1,\dots,x^n)\).
\end{thm}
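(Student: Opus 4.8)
The plan is to run the Euclidean argument essentially verbatim, the one genuinely new feature being that rearranged subsets of $M$ are totally ordered by inclusion. First I would reduce to characteristic functions. If $A,B\subset M$ are open with finite volume, their rearrangements $A^*=(0,r_A)\times\Sigma$ and $B^*=(0,r_B)\times\Sigma$ are initial segments of the same product, hence nested, so $A^*\cap B^*$ is whichever of the two has the smaller radius. Combining this with equimeasurability $\text{Vol}_g(A^*)=\text{Vol}_g(A)$ (from the corollary $\mu_f=\mu_{f^*}$) and the monotonicity of $r\mapsto\int_{(0,r)\times\Sigma}dV_g$, one obtains
\[
\int_M \mathcal{X}_{A^*}\mathcal{X}_{B^*}\,dV_g=\text{Vol}_g(A^*\cap B^*)=\min\{\text{Vol}_g(A),\text{Vol}_g(B)\}\ge \text{Vol}_g(A\cap B)=\int_M\mathcal{X}_A\mathcal{X}_B\,dV_g,
\]
the inequality being monotonicity of the measure applied to $A\cap B\subset A$ and $A\cap B\subset B$.

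For general non-negative $f,g$ I would insert the layer-cake decompositions $f=\int_0^\infty\mathcal{X}_{\{f>s\}}\,ds$ and $g=\int_0^\infty\mathcal{X}_{\{g>t\}}\,dt$ and apply Tonelli's theorem — legitimate because $dV_g$ is a Radon measure and the integrand is non-negative — to write
\[
\int_M f(x)g(x)\,dV_g=\int_0^\infty\!\int_0^\infty\text{Vol}_g\big(\{f>s\}\cap\{g>t\}\big)\,ds\,dt.
\]
Then I would apply the characteristic-function estimate level by level with $A=\{f>s\}$, $B=\{g>t\}$, invoke the theorem that $\{f>s\}^*=\{f^*>s\}$ and $\{g>t\}^*=\{g^*>t\}$ to rewrite the resulting bound in terms of the level sets of $f^*$ and $g^*$, and reassemble the double integral with Tonelli once more to reach $\int_M f^*g^*\,dV_g$. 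The standing hypotheses that $f,g$ vanish at infinity and that $\lim_{r\to0}\int_{(0,r)\times\Sigma}dV_g=0$ are precisely what ensures that every $\{f>s\}^*$ and $\{g>t\}^*$ is a well-defined set of the form $(0,r)\times\Sigma$.

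I do not anticipate a serious obstacle; the argument is structurally identical to the one in $\mathbb{R}^n$. The two points that need a line of care are (i) the nestedness of the rearranged level sets, which is immediate since they are all initial segments $(0,r)\times\Sigma$ of the same product and hence totally ordered by radius, and (ii) the interchange of the order of integration, which is Tonelli applied to a non-negative, $\sigma$-finite integrand. Alternatively, if $M$ embeds isometrically in some $\mathbb{R}^N$ the result is inherited directly from the Euclidean Hardy-Littlewood inequality, but the argument above requires no such embedding.
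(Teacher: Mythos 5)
Your proposal is correct and follows essentially the same route as the paper, which presents the manifold Hardy--Littlewood inequality without a separate proof on the grounds that it "follows exactly as before in the $\mathbb{R}^n$ case." You correctly identify the one structural fact that makes the Euclidean argument transfer: the rearranged sets $A^*=(0,r_A)\times\Sigma$ and $B^*=(0,r_B)\times\Sigma$ are nested (just as centered balls in $\mathbb{R}^n$ are), so $\mathrm{Vol}_g(A^*\cap B^*)=\min\{\mathrm{Vol}_g(A^*),\mathrm{Vol}_g(B^*)\}$, after which the reduction to characteristic functions, the layer-cake decomposition, and Tonelli proceed verbatim.
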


\begin{thm}[Rearrangement Decreases \(L^p\) Distance]
    Given \(f,g\) as before and \(1\leq p \leq \infty\),
    \begin{equation}
        ||f(x)-g(x)||_p \geq ||f^*(x)-g^*(x)||_p \hspace{2mm} \forall x\in M.
    \end{equation}
\end{thm}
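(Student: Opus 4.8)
The plan is to transplant the Euclidean argument for ``Rearrangement Decreases $L^p$ Distance'' essentially verbatim, replacing Lebesgue measure by $\mathrm{Vol}_g$ and each $\mathbb{R}^n$-lemma by its manifold counterpart, all of which are now available. First I would record the two ingredients that the $\mathbb{R}^n$ proof used but which have not yet been restated on $M$. (i) The identity $\mathcal{X}_A^* = \mathcal{X}_{A^*}$ for open $A\subset M$: this is immediate from the level-set theorem just proved, since the super-level sets of $\mathcal{X}_A^*$ are $\emptyset$, $A^*$, or $M$ according to whether $t\ge 1$, $t\in[0,1)$, or $t<0$, exactly as in the Euclidean computation. (ii) The manifold version of the auxiliary lemma $\int_M f\,\mathcal{X}_{\{g\le s\}}\,dV_g \ge \int_M f^*\,\mathcal{X}_{\{g^*\le s\}}\,dV_g$ for non-negative integrable $f,g$: I would copy the Euclidean proof, writing $\mathcal{X}_{\{g\le s\}}=1-\mathcal{X}_{\{g>s\}}$, splitting the integral, applying the Hardy--Littlewood inequality on $M$ to $\int_M f\,\mathcal{X}_{\{g>s\}}\,dV_g$, using $L^1$-norm preservation on $M$ for the remaining term, and rewriting $\mathcal{X}^*_{\{g>s\}}=\mathcal{X}_{\{g^*>s\}}$ by (i).

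Next I would check that post-composition with a non-decreasing function commutes with rearrangement on $M$: if $h:[0,\infty)\to[0,\infty)$ is non-decreasing with $h(0)=0$, then $(h\circ f)^* = h\circ f^*$. The point is that the super-level sets $\{h\circ f>s\}$ coincide with super-level sets $\{f>\tau\}$ of $f$ itself (for the appropriate threshold $\tau$), hence $\{h\circ f>s\}^* = \{f>\tau\}^*$, and feeding this into the layer-cake definition of $(h\circ f)^*$ identifies it with $h\circ f^*$; equimeasurability on $M$ (the corollary $\mu_f=\mu_{f^*}$) makes the thresholds match up. In particular this yields $\bigl([f(x)-t]_+^{\,p-1}\bigr)^* = [f^*(x)-t]_+^{\,p-1}$, and likewise for $g$.

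With these in hand, the main argument is a line-by-line repetition of the $\mathbb{R}^n$ proof. Decompose $|f-g| = [f-g]_+ + [g-f]_+$ and rewrite it, as in the Euclidean case, as $\sup_{t\in\mathbb{N}}\bigl([f-t]_+\,\mathcal{X}_{\{g\le t\}} + [g-t]_+\,\mathcal{X}_{\{f\le t\}}\bigr)$; raise to the $p$-th power and expand via Corollary \ref{decomp}, which is a consequence of Cavalieri's principle (Proposition \ref{cav}) stated for an arbitrary measure space and so applies to $(M,\mathcal{B}(M),\mathrm{Vol}_g)$; integrate over $M$ against $dV_g$ and use Fubini; apply the manifold lemma (ii) to each of the two terms; and finally replace $\bigl([f-t]_+^{\,p-1}\bigr)^*$ by $[f^*-t]_+^{\,p-1}$ and similarly for $g$, recognising the right-hand side as $\|f^*-g^*\|_p^p$. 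The endpoint $p=\infty$ follows separately from the $L^\infty$-norm preservation already established on $M$ (or as a limit $p\to\infty$), and the equality case transfers as well: equality forces $(f(x)-g(x))(f(y)-g(y))\ge 0$ for almost every $x,y\in M$, i.e.\ $f$ and $g$ have the same family of super-level sets.

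The main obstacle is not conceptual but bookkeeping: one must justify the interchange of $\sup_{t\in\mathbb{N}}$ with $\int_M$ and $\int_0^\infty$ (monotone convergence) and confirm that Cavalieri's principle and the auxiliary lemma genuinely apply with $\mathrm{Vol}_g$ in place of Lebesgue measure. This rests only on $(M,\mathrm{Vol}_g)$ being a $\sigma$-finite measure space together with the manifold versions of Hardy--Littlewood, of $L^p$-norm preservation, and of the level-set theorem, all proved above; smoothness and compact support of $f$ and $g$ supply the integrability needed throughout.
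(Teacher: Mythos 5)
Your proposal is correct and matches the paper's intent: the paper states this manifold theorem without a separate proof, remarking after the level-set theorem on $M$ that the subsequent results ``follow exactly as before in the $\mathbb{R}^n$ case.'' Your line-by-line transplant of the Euclidean argument --- replacing Lebesgue measure by $dV_g$, re-establishing the characteristic-function identity $\mathcal{X}_A^*=\mathcal{X}_{A^*}$, the auxiliary lemma $\int_M f\,\mathcal{X}_{\{g\le s\}}\,dV_g \ge \int_M f^*\,\mathcal{X}_{\{g^*\le s\}}\,dV_g$, and the commutation $(h\circ f)^*=h\circ f^*$ on $M$, then invoking Cavalieri's principle for the abstract measure space $(M,\mathcal{B}(M),\mathrm{Vol}_g)$ --- is precisely how that remark should be fleshed out.
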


We can see that plenty of theorems generalise nicely to the Riemannian manifolds setting.

\section{Smooth Co-area Formula for Riemannian Manifolds Proof}\label{sec:co_area}

In the \(\mathbb{R}^n\) case, we needed the coarea formula for splitting an \(n\)-dimensional integral into an \((n-1)\)-dimensional integral over level sets and a 1-dimensional integral over all of these level sets. We would like to reproduce this formula for Riemannian manifolds, but in more generality. Particularly, if we have an \(m\)-dimensional Riemannian manifold, then we would like to write this as an integral over an \((m-n)\)-submanifold and an \(n\)-dimensional submanifold. The \((m-n)\)-integral picks up the idea of level sets as before, where the \(n\)-integral acts as the parameter integral. Again we use the Hausdorff measure, now defined on Riemannian manifolds. 

The proof consists of many smaller lemmas, following Chavel's textbook \cite{RM_chavel}.

We can generalise the previous discussion of the Jacobian to smooth manifold maps. Given Riemannian manifolds \((M^m,g_M), (N^n,g_N)\), with \(m\geq n\), and a \(C^1\) map, \(\Phi:M\rightarrow N\), the Jacobian of \(\Phi\) is defined as,
\begin{equation}
    \mathcal{J}_{\Phi}(x) = \sqrt{\text{det}\hspace{1mm}\Phi_* \circ (\Phi _*)^{adj}},
\end{equation}
where \(\Phi_*:TM\rightarrow TN\), \((p,v_p)\mapsto (\Phi(p),v_{\Phi(p)})\), is the push-forward map of tangent vectors and \((\Phi_*)^{\text{adj}}:TN\rightarrow TM\) is the adjoint map. Then the composite map is then \(\Phi_*\circ (\Phi_*)^{\text{adj}}:TN\rightarrow TN\). Write \(\Phi_{*|x}:T_xM\rightarrow T_{\Phi(x)}N\) for a specific point \(x\in M\) and then considering the natural vector space isomorphism, \(T_{\Phi(p)}N\cong \mathbb{R}^{n}\), let \(\Phi_*\circ (\Phi_*)^{\text{adj}}\) have an \(n\times n\) matrix representation \(A\). Then the \(n\)-dimensional volume of the n-parallelepiped is given by,,
\begin{equation}
    \begin{split}
        \mathcal{J}_A & = \sqrt{\text{det}(A^TA)}\\
        & = \text{det}\big(\langle v_i,v_j \rangle _{T_pM} \big)\\
        & = \langle v_1\wedge \dots \wedge v_n, v_1\wedge \dots \wedge v_n\rangle _{\Lambda^n T_pM},
    \end{split}
\end{equation}
where the tangent space \(T_pM\cong \mathbb{R}^{m}\) as a vector space isomorphism also. This is also called the Gram determinant, \(\mathbb{G}(v_1,\dots,v_n)\).

Let \(\{e_i\}, \{\omega ^i\}\) be an orthonormal moving frame (respectively dual coframe) on \(M\) and similarly \(\{E_j\}, \{\varphi ^j\}\) on \(N\). Note that we can pullback 1-forms on \(N\) to \(M\), that is there exist functions \(\sigma ^j_i\) on \(M\), for \(i=1,\dots,m\), such that, \(\Phi ^* \varphi ^j = \sigma ^j_i \omega ^i\), where \(\Phi ^*:TN \rightarrow TM\) is the pullback of tangent vectors from \(T_pM\) to \(T_{\Phi(p)}N\).

\begin{lemma}[Form of Jacobian]\label{form_of_Jac}
    \begin{equation}
        \mathcal{J}_{\Phi}(x) = \left\{
        \begin{array}{ll}
              0 & , \text{rank}\hspace{1mm}\Phi _* < n \\
              |\text{det}(\Phi_*|(\text{ker}\Phi_*)^{\perp})| & , \text{rank}\hspace{1mm}\Phi _* = n \\
        \end{array} 
        \right.
    \end{equation},
    where \((\text{ker}\Phi_*)^{\perp}=\text{span}\{a_1^T,\dots, a_n^T\}\) , where \(\Phi_*\) has rows, \(a_i\).
\end{lemma}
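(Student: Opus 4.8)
The plan is to compute the Gram determinant $\mathbb{G}(\Phi_{*|x}e_1,\dots,\Phi_{*|x}e_m)$—or rather recognize that $\mathcal{J}_\Phi(x)=\sqrt{\det(\Phi_*\circ(\Phi_*)^{\mathrm{adj}})}$ as a map $T_{\Phi(x)}N\to T_{\Phi(x)}N$—by diagonalizing $\Phi_{*|x}$ via the singular value decomposition. First I would fix $x\in M$ and work purely at the level of the linear map $L:=\Phi_{*|x}:T_xM\to T_{\Phi(x)}N$ between inner product spaces of dimensions $m\ge n$. The composite $L\circ L^{\mathrm{adj}}:T_{\Phi(x)}N\to T_{\Phi(x)}N$ is self-adjoint and positive semi-definite, so it has non-negative eigenvalues; write them $\lambda_1\ge\cdots\ge\lambda_n\ge 0$, so that $\mathcal{J}_\Phi(x)=\sqrt{\lambda_1\cdots\lambda_n}$. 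The case split is now immediate from elementary linear algebra: $\operatorname{rank}L=\operatorname{rank}(LL^{\mathrm{adj}})$, so if $\operatorname{rank}L<n$ then some $\lambda_i=0$ and $\mathcal{J}_\Phi(x)=0$, which is the first line of the claim.

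For the second line, suppose $\operatorname{rank}L=n$, i.e. $L$ is surjective. Then $\ker L$ has dimension $m-n$, and $(\ker L)^\perp$ is an $n$-dimensional subspace of $T_xM$ on which $L$ restricts to a linear \emph{isomorphism} $L|_{(\ker L)^\perp}:(\ker L)^\perp\to T_{\Phi(x)}N$. The key observation is that $L^{\mathrm{adj}}$ has image exactly $(\ker L)^\perp$ (the orthogonal complement of the kernel equals the image of the adjoint), so $LL^{\mathrm{adj}}$ factors as the composition of the isomorphism $L|_{(\ker L)^\perp}$ with $L^{\mathrm{adj}}$ viewed as a map $T_{\Phi(x)}N\to(\ker L)^\perp$; and under the identification furnished by a choice of orthonormal bases, $L^{\mathrm{adj}}:T_{\Phi(x)}N\to(\ker L)^\perp$ is precisely the adjoint of $L|_{(\ker L)^\perp}$. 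Hence
\begin{equation}
    \det\big(L L^{\mathrm{adj}}\big)=\det\big(L|_{(\ker L)^\perp}\big)\cdot\det\big((L|_{(\ker L)^\perp})^{\mathrm{adj}}\big)=\big(\det L|_{(\ker L)^\perp}\big)^2,
\end{equation}
so $\mathcal{J}_\Phi(x)=\sqrt{(\det L|_{(\ker L)^\perp})^2}=|\det(\Phi_*|_{(\ker\Phi_*)^\perp})|$, as claimed. The final identification of $(\ker\Phi_*)^\perp$ with $\operatorname{span}\{a_1^T,\dots,a_n^T\}$ when $\Phi_*$ has rows $a_i$ is just the statement that the row space of a matrix is the orthogonal complement of its kernel, combined with the fact that $\operatorname{rank}\Phi_*=n$ means the $n$ rows are linearly independent and hence span an $n$-dimensional space.

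I expect the main obstacle to be purely expository rather than mathematical: being careful about \emph{which} inner products and \emph{which} identifications are in play, since $\Phi_{*|x}$ lives between $T_xM$ (with $g_M$) and $T_{\Phi(x)}N$ (with $g_N$), and the adjoint $(\Phi_*)^{\mathrm{adj}}$ is taken with respect to both metrics simultaneously. A clean way to sidestep bookkeeping is to pick a $g_M$-orthonormal basis of $T_xM$ adapted to the splitting $T_xM=(\ker L)^\perp\oplus\ker L$ and a $g_N$-orthonormal basis of $T_{\Phi(x)}N$; in these bases $L$ has block matrix $\begin{pmatrix}B & 0\end{pmatrix}$ with $B$ an $n\times n$ matrix, so $LL^{\mathrm{adj}}=BB^T$ and $\det(LL^{\mathrm{adj}})=(\det B)^2$ with $B$ the matrix of $L|_{(\ker L)^\perp}$—making the whole argument a two-line block computation. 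Alternatively one can invoke the singular value decomposition directly: $L=U\Sigma V^T$ with $\Sigma=(\operatorname{diag}(\sigma_1,\dots,\sigma_n)\ \vert\ 0)$, whence $\mathcal{J}_\Phi=\sigma_1\cdots\sigma_n=|\det(L|_{(\ker L)^\perp})|$ and the case $\operatorname{rank}<n$ corresponds to some $\sigma_i=0$.
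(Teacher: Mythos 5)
Your proof is correct, and it takes a genuinely different (purely linear-algebraic) route from the paper. The paper works in the moving-coframe language: in the full-rank case it notes $x$ is a regular point, invokes the local fibration given by the submersion, picks the orthonormal frame $e_1,\dots,e_m$ so that $e_{n+1},\dots,e_m$ are tangent to the fibres, and then reads off from the vanishing $\sigma^j_l$ ($l>n$) that $\Phi^*(\varphi^1\wedge\cdots\wedge\varphi^n)=\det(\sigma^j_k)\,\omega^1\wedge\cdots\wedge\omega^n$, identifying $\det(\sigma^j_k)$ with $\det(\Phi_*|_{(\ker\Phi_*)^\perp})$. You instead work at a single point with the linear map $L=\Phi_{*|x}$ and its SVD/block form $\begin{pmatrix}B&0\end{pmatrix}$, observing that $\operatorname{im}L^{\mathrm{adj}}=(\ker L)^\perp$ and that $L^{\mathrm{adj}}$ restricted in codomain is the adjoint of $L|_{(\ker L)^\perp}$, so $\det(LL^{\mathrm{adj}})=(\det B)^2$. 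At heart both arguments choose an orthonormal basis adapted to $\ker\Phi_*\oplus(\ker\Phi_*)^\perp$ and reduce to an $n\times n$ block, but the paper's presentation has the advantage of producing the identity $\Phi^*(\varphi^1\wedge\cdots\wedge\varphi^n)=\mathcal{J}_\Phi\,\omega^1\wedge\cdots\wedge\omega^n$ as a byproduct, which is exactly what the next lemma (Equivalence of Forms) and the co-area proof consume; your presentation does not set that up. On the other hand, your rank-deficient case is cleaner: you argue via the spectrum of the positive semi-definite operator $LL^{\mathrm{adj}}$, whereas the paper asserts that a matrix of non-full rank has a zero row, which is not literally true (only that it is singular, i.e., has zero determinant) — your eigenvalue argument avoids this slip entirely.
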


\begin{proof}
    If we identify each \(T_pM \cong T_p\mathbb{R}^m\) and \(T_{\Phi(p)}N \cong T_{\Phi(p)}\mathbb{R}^n\), which are isomorphic as vector spaces, then we can apply the rank-nullity theorem. If \(\text{rank}\Phi_* < n\), then \(\text{rank}(\Phi_*\circ (\Phi_*)^{\text{adj}})  < n\) , and by rank-nullity theorem,  \(\text{dim ker}(\Phi_*\circ (\Phi_*)^{\text{adj}}) > n-n =0\). Hence the \(n\times n\) matrix representation of \(\Phi_*\circ (\Phi_*)^{\text{adj}}\) has at least one zero rows, so \(\text{det}\hspace{1mm}\Phi_*\circ (\Phi_*)^{\text{adj}}\textbf{} = 0\) and \(\mathcal{J}_{\Phi}(x) = 0\) in this case.
Similarly, if \(\text{rank}\Phi_*|_x =n\), for some \(x\in M\),  then \(\text{dim ker}(\Phi_*|_x) = m-n\), so \(x\) is a regular point of \(\Phi\). Then, \(\Phi\) is a submersion on a neighbourhood of \(x\), so given a local fibration about \(x\), we can pick \(e_{n+1},\dots,e_m\) tangent to the fibres. That means \(e_1,\dots,e_n\) are orthogonal to this fibration and,
\begin{equation}
    \sigma^j_l = 0, \hspace{4mm} j=1,\dots,n,  \hspace{4mm} l=n+1,\dots,m.
\end{equation}
Then,
\begin{equation}
    \Phi ^* \varphi ^j = \sigma ^j_k \omega ^k, \hspace{2mm} j,k=1,\dots,n,
\end{equation}
and,
\begin{equation}    
    \begin{split}
        \Phi^* (\varphi^1\wedge \dots \wedge \varphi^n) & = \text{det}(\sigma^j_k) \omega^1 \wedge \dots \wedge \omega ^n\\
        & = \mathcal{J}_{\Phi}\omega^1 \wedge \dots \wedge \omega ^n,
    \end{split}
\end{equation}
where,
\begin{equation}
    \text{det}(\sigma^j_k) = \text{det}
    \begin{pmatrix}
    \sigma^1_1 & \dots & \sigma^n_1\\
    \vdots & \ddots & \vdots\\
    \sigma^1_n & \dots & \sigma^n_n\\
    \end{pmatrix} = \text{det}\big(\Phi_* | (\text{ker}\Phi_*)^{\perp}\big).
\end{equation}

Concluding two cases gives the result, noting \(\mathcal{J}_{\Phi} \geq 0\). 

\end{proof}

The argument with the local fibration naturally comes from thinking about a vector space as, \(V=\text{ker}(T)\oplus T(V)\), where \(T:V\rightarrow V\) is a linear map.

We need to consider how \(\Phi\) pulls-back \(n\)-forms on \(N\) to \(m\)-forms on \(M\).

\begin{lemma}
    If for some \(x\in M\), we have that \(\text{dim}(\text{ker}\Phi_{*|x})>m-n\), then,
    \begin{equation}
        \Phi^* (\varphi^1\wedge \dots \wedge \varphi^n) = 0,
    \end{equation}
    at \(x\).
\end{lemma}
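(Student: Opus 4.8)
The plan is to reduce this to elementary linear algebra at the single point $x$, since the assertion $\Phi^*(\varphi^1\wedge\dots\wedge\varphi^n)=0$ at $x$ is purely pointwise. First I would apply the rank--nullity theorem to the differential $\Phi_{*|x}:T_xM\to T_{\Phi(x)}N$: since $\dim T_xM=m$ and, by hypothesis, $\dim(\ker\Phi_{*|x})>m-n$, we get $\operatorname{rank}\Phi_{*|x}=m-\dim(\ker\Phi_{*|x})<m-(m-n)=n$. Equivalently, the image $\Phi_{*|x}(T_xM)$ is a subspace of $T_{\Phi(x)}N$ of dimension at most $n-1$.

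Next, I would use the definition of the pullback of a form: for any $v_1,\dots,v_n\in T_xM$,
\[
\big(\Phi^*(\varphi^1\wedge\dots\wedge\varphi^n)\big)_x(v_1,\dots,v_n)=(\varphi^1\wedge\dots\wedge\varphi^n)_{\Phi(x)}\big(\Phi_{*|x}v_1,\dots,\Phi_{*|x}v_n\big).
\]
The $n$ vectors $\Phi_{*|x}v_1,\dots,\Phi_{*|x}v_n$ all lie in the image of $\Phi_{*|x}$, which has dimension $\le n-1$, so they are linearly dependent. Since $\varphi^1\wedge\dots\wedge\varphi^n$ is a top-degree form on the $n$-dimensional space $T_{\Phi(x)}N$, it annihilates every linearly dependent $n$-tuple, hence the right-hand side vanishes. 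As $v_1,\dots,v_n$ were arbitrary, $\Phi^*(\varphi^1\wedge\dots\wedge\varphi^n)=0$ at $x$.

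Alternatively, staying within the notation already set up: writing $\Phi^*\varphi^j=\sigma^j_i\,\omega^i$, the coefficient array $(\sigma^j_i)$ is the $n\times m$ matrix of $\Phi_{*|x}$ with respect to the orthonormal frames $\{e_i\}$ on $M$ and $\{E_j\}$ on $N$, so $\operatorname{rank}(\sigma^j_i)=\operatorname{rank}\Phi_{*|x}<n$. Expanding the wedge,
\[
\Phi^*(\varphi^1\wedge\dots\wedge\varphi^n)=\sum_{i_1<\dots<i_n}\det\!\big(\sigma^{j}_{i_k}\big)_{j,k}\;\omega^{i_1}\wedge\dots\wedge\omega^{i_n},
\]
and every $n\times n$ minor $\det(\sigma^{j}_{i_k})$ is zero because the rank of $(\sigma^j_i)$ is less than $n$. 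I do not expect a genuine obstacle here; the only point requiring mild care is to keep the argument strictly pointwise, so that no issues about the smoothness of the frames or the existence of a local fibration (as in Lemma~\ref{form_of_Jac}) are needed -- it suffices to fix a basis of $T_xM$ and argue at $x$.
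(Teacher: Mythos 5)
Your proof is correct and is essentially the standard linear-algebra argument for why pullback by a rank-deficient map annihilates a top-degree form; it is in fact more careful than what the paper offers. The paper's proof says the result ``follows directly from the following lemma'' (the Equivalence-of-Forms identity $\mathcal{J}_{\Phi}\,\omega^1\wedge\dots\wedge\omega^m=\Phi^*(\varphi^1\wedge\dots\wedge\varphi^n)\wedge\omega^{n+1}\wedge\dots\wedge\omega^m$), combined with $\mathcal{J}_{\Phi|x}=0$ and $\omega^{n+1}\wedge\dots\wedge\omega^m\neq 0$. But that identity is established in the paper only under the hypothesis $\dim\ker\Phi_{*|x}=m-n$, i.e.\ at a regular point where the adapted frame from a local fibration exists, which is precisely the case excluded here; and even granting the identity, $\alpha\wedge\omega^{n+1}\wedge\dots\wedge\omega^m=0$ with $\omega^{n+1}\wedge\dots\wedge\omega^m\neq 0$ only forces the coefficient of $\omega^1\wedge\dots\wedge\omega^n$ in $\alpha$ to vanish, not all of $\alpha$, unless one already knows $\alpha$ is a multiple of $\omega^1\wedge\dots\wedge\omega^n$ --- information that again comes from the regular-point fibration and is unavailable here. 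Your argument sidesteps all of this and is strictly pointwise: rank--nullity gives $\text{rank}\,\Phi_{*|x}\le n-1$, so any $n$ pushed-forward vectors $\Phi_{*|x}v_1,\dots,\Phi_{*|x}v_n$ lie in a subspace of $T_{\Phi(x)}N$ of dimension at most $n-1$, hence are linearly dependent and are killed by the top form $\varphi^1\wedge\dots\wedge\varphi^n$; equivalently, every $n\times n$ minor of $(\sigma^j_i)$ vanishes. This is the argument the paper's proof should have given, and your remark that no smoothness of frames or local fibration is needed is exactly the right point.
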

\begin{proof}
    Follows directly from the following lemma, as  \(\text{dim}(\text{ker}\Phi_{*|x})>m-n \implies \text{rank}\Phi_{*|x} < n\), so \(\mathcal{J}_{\Phi|x}=0\) and \(\omega^{n+1} \wedge \dots \wedge \omega^m \neq 0\). Equivalently, \(x\) is a critical point of \(\Phi\).
\end{proof}

\begin{lemma}[Equivalence of Forms on \(M\)]
    If for some \(x\in M\), we have that dim \((\)ker\(\Phi_{*|x})=m-n\), then,
    \begin{equation}
        \mathcal{J}_{\Phi}\omega^1 \wedge \dots \wedge \omega ^m = \Phi^* (\varphi^1\wedge \dots \wedge \varphi^n) \wedge \omega^{n+1}\wedge \dots \wedge \omega^{m},
    \end{equation}
    at \(x\).
\end{lemma}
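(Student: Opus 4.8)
The plan is to reduce the identity to a pointwise statement at $x$ and then read it off from the computation already carried out in the proof of Lemma \ref{form_of_Jac}. First I would note that the hypothesis $\dim(\text{ker}\,\Phi_{*|x}) = m-n$ is, by rank--nullity, equivalent to $\text{rank}\,\Phi_{*|x} = n$; thus $x$ is a regular point and the second case of Lemma \ref{form_of_Jac} applies, so $\mathcal{J}_{\Phi}(x) = |\text{det}(\Phi_*|(\text{ker}\,\Phi_*)^{\perp})|$.

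Since both sides of the asserted equation are $m$-forms evaluated at the single point $x$, I would next take the orthonormal frame $\{e_i\}$ (equivalently its dual coframe $\{\omega^i\}$) adapted to $\Phi$ at $x$, meaning that $e_{n+1},\dots,e_m$ is an orthonormal basis of $\text{ker}\,\Phi_{*|x}$ and hence $e_1,\dots,e_n$ is an orthonormal basis of its orthogonal complement --- this is precisely the frame used in the proof of Lemma \ref{form_of_Jac}. For such a frame, $\sigma^j_l = (\Phi^*\varphi^j)(e_l) = \varphi^j(\Phi_* e_l) = 0$ whenever $l \geq n+1$, so $\Phi^*\varphi^j = \sum_{k=1}^n \sigma^j_k\,\omega^k$ for $j = 1,\dots,n$. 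Expanding the wedge product then gives $\Phi^*(\varphi^1\wedge\dots\wedge\varphi^n) = \text{det}(\sigma^j_k)_{1\le j,k\le n}\,\omega^1\wedge\dots\wedge\omega^n$, and by Lemma \ref{form_of_Jac} the scalar $\text{det}(\sigma^j_k)_{1\le j,k\le n} = \text{det}(\Phi_*|(\text{ker}\,\Phi_*)^{\perp})$ equals $\mathcal{J}_{\Phi}(x)$ (the sign being fixed, as there, by working with positively oriented orthonormal frames).

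Finally I would wedge both sides of this last identity with the nonzero $(m-n)$-form $\omega^{n+1}\wedge\dots\wedge\omega^m$ to obtain
\[
\Phi^*(\varphi^1\wedge\dots\wedge\varphi^n)\wedge\omega^{n+1}\wedge\dots\wedge\omega^m = \mathcal{J}_{\Phi}\,\omega^1\wedge\dots\wedge\omega^m,
\]
which is the claim. The step that needs the most care --- and the one where a careless argument would break --- is the choice of frame: the identity really does require $\{\omega^{n+1},\dots,\omega^m\}$ to be dual to an orthonormal basis of $\text{ker}\,\Phi_{*|x}$, since for a generic orthonormal frame the coefficient appearing on the right-hand side is not $\mathcal{J}_{\Phi}$ (it gets multiplied by the cosine-type factor coming from the position of $\text{span}(e_{n+1},\dots,e_m)$ relative to the kernel). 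So I would make explicit that $\{e_i\}$ is taken adapted to $\Phi$ at $x$, exactly as in Lemma \ref{form_of_Jac}; the remaining verifications --- the vanishing of the relevant $\sigma^j_l$, the determinant expansion of the pulled-back $n$-form, and the final wedge --- are then routine. As a remark I would note that, because the conclusion is pointwise, one need not invoke the local fibration at all: extending an orthonormal basis of $\text{ker}\,\Phi_{*|x}$ to one of $T_xM$ suffices, which is slightly cleaner than re-running the submersion argument of the previous lemma.
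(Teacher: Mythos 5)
Your proof is correct and takes essentially the same route as the paper: it uses the adapted orthonormal frame from Lemma \ref{form_of_Jac} (with $e_{n+1},\dots,e_m$ spanning $\ker\Phi_{*|x}$), deduces $\Phi^*(\varphi^1\wedge\dots\wedge\varphi^n)=\mathcal{J}_\Phi\,\omega^1\wedge\dots\wedge\omega^n$, and wedges with $\omega^{n+1}\wedge\dots\wedge\omega^m$. Your closing remark that one can extend an orthonormal basis of $\ker\Phi_{*|x}$ directly, without re-running the local fibration argument, is a fair observation but amounts to the same frame the paper already uses; it is a cosmetic streamlining rather than a different proof.
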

\begin{proof}
    If we have the nice basis from Lemma \ref{form_of_Jac}, then we have 
    \begin{equation}
    \Phi^* (\varphi^1\wedge \dots \wedge \varphi^n) = \mathcal{J}_{\Phi} \omega^1 \wedge \dots \wedge \omega ^n,
\end{equation}
so we can wedge \(\omega^{n+1}\wedge \dots \wedge \omega^m\) onto each side to get,
\begin{equation}
    \mathcal{J}_{\Phi} \omega^1 \wedge \dots \wedge \omega ^m = \Phi^* (\varphi^1\wedge \dots \wedge \varphi^n) \wedge\omega^{n+1}\wedge \dots \wedge \omega^m. 
\end{equation}
\end{proof}
Now suppose \(\Phi:M\rightarrow N\) is \(C^r\), for some \(r>m-n\), and \(M,N\) as before. Let \(f:M\rightarrow \mathbb{R}\) be a measurable, non-negative function in \(L^1(M)\). We need to employ Sard's theorem to prove the following theorem.





\begin{thm}[Sard's Theorem]
    Let \(\Phi:M^m \rightarrow N^n\) be a smooth map of manifolds.  Let \(X(\Phi)\) denote the set of critical points of \(\Phi\); that is the set of all \(x\in M\) with,
    \begin{equation}
        \text{rank}\hspace{1mm} d\Phi|_x < n.
    \end{equation}
    Then, \(X(\Phi)\subset N\) has measure zero in \(N\).
\end{thm}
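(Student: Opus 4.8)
The plan is to run the classical three-step argument for Sard's theorem, first reducing the global statement to a local one in Euclidean space and then inducting on the dimension $m$ of the source. Since having measure zero is a countable and local property and both $M$ and $N$ are second countable, we may cover $M$ and $N$ by countably many coordinate charts and reduce to: for every open $U\subseteq\mathbb{R}^m$ and every smooth $f:U\to\mathbb{R}^n$, the image $f(C)$ of the critical set $C=\{x\in U:\text{rank}\,df_x<n\}$ has Lebesgue measure zero in $\mathbb{R}^n$. (We interpret the conclusion, as is standard, as the assertion that the set of critical \emph{values} $\Phi(X(\Phi))$ is null in $N$; one also checks that the argument only needs $f$ to be $C^r$ with $r>m-n$, which is precisely the regularity invoked later for the co-area formula.) The induction is on $m$, the case $m=0$ being immediate since then $f(U)$ is a single point.

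For the inductive step, stratify $C$ by the order of vanishing of the derivatives of $f$: let $C_i$ be the set of points at which every partial derivative of $f$ of order $\le i$ vanishes, so $C\supseteq C_1\supseteq C_2\supseteq\cdots$, and split $f(C)=f(C\setminus C_1)\,\cup\,\bigcup_{i\ge1}f(C_i\setminus C_{i+1})\,\cup\,f(C_k)$ for a large integer $k$ to be fixed below. For $f(C\setminus C_1)$: near a point where, say, $\partial f_1/\partial x_1\ne0$, the map $x\mapsto(f_1(x),x_2,\dots,x_m)$ is a local diffeomorphism, so after this change of source coordinates $f$ carries each hyperplane $\{x_1=c\}$ into $\{y_1=c\}$; the critical points of $f$ lying in such a slice are exactly the critical points of the restricted map $\mathbb{R}^{m-1}\to\mathbb{R}^{n-1}$, so by the induction hypothesis each slice of $f(C\setminus C_1)$ is $(n-1)$-null, and Fubini's theorem makes $f(C\setminus C_1)$ null in $\mathbb{R}^n$. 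For $f(C_i\setminus C_{i+1})$ with $i\ge1$: at such a point some $i$-th order partial $w$ of some $f_j$ vanishes on $C_i$ while $dw\ne0$, so $V=\{w=0\}$ is locally a hypersurface containing $C_i$; since $C_i\subseteq C_1$ all first derivatives of $f$ vanish there, hence $C_i\cap V$ consists of critical points of $f|_V:\mathbb{R}^{m-1}\to\mathbb{R}^n$, and the induction hypothesis applies after covering $C_i\setminus C_{i+1}$ by countably many such local pieces.

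Finally, $f(C_k)$ is estimated directly. By Taylor's theorem, if $x\in C_k$ then $|f(y)-f(x)|\le A|y-x|^{k+1}$ for $y$ in a fixed compact neighbourhood of $x$; covering a compact subset of $C_k$ by roughly $\delta^{-m}$ cubes of side $\delta$, the image of each cube lies in a ball of radius of order $\delta^{k+1}$, hence of $n$-volume of order $\delta^{(k+1)n}$, so the total volume is of order $\delta^{(k+1)n-m}$, which tends to $0$ as $\delta\to0$ once $k$ is chosen so that $(k+1)n>m$. Summing the finitely many pieces shows $f(C)$ is null, and reassembling over the countable cover of $M$ and $N$ gives that $\Phi(X(\Phi))$ has measure zero in $N$. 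I expect the main obstacle to be the bookkeeping in the two inductive steps: arranging the local coordinate changes so the restricted maps genuinely have the asserted critical sets, ensuring the relevant subsets of $\mathbb{R}^n$ are measurable (e.g.\ by exhausting $C$ by compacta, so the images are $F_\sigma$) before invoking Fubini, and — if one insists on the sharp hypothesis $r>m-n$ instead of $C^\infty$ — tracking how many derivatives are consumed at each stratum. The Taylor covering estimate in the last step is routine.
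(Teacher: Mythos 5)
Your proposal is correct and reproduces the classical Milnor argument: reduction to a local Euclidean statement, induction on the source dimension $m$, the stratification $C\supseteq C_1\supseteq C_2\supseteq\cdots$ with Fubini handling $C\setminus C_1$ and $C_i\setminus C_{i+1}$, and the Taylor/covering estimate for $C_k$. The paper does not prove the theorem itself but simply defers to chapter 3 of Milnor's \emph{Topology from the differentiable viewpoint}, which is precisely the argument you have sketched, so you are taking essentially the same route as the paper.
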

\begin{proof}
    Refer to Milnor's book "Topology from the differentiable viewpoint", chapter 3.
\end{proof}

Now we can quickly deduce the main result of this section.

\begin{thm}[Generalised Co-area Formula]
    Define \(\Phi:M\rightarrow N \in C^r, \hspace{1mm} r > m-n\), and \(f:M\rightarrow \mathbb{R}\in L^1(M)\), then,
    \begin{equation}
        \int_M f \mathcal{J}_{\Phi} d\mathcal{H}^{m} = \int_N d\mathcal{H}^{n}(y) \int_{\Phi^{-1}[y]} \left(f|_{\Phi^{-1}[y]} \right) d\mathcal{H}^{m-n},
    \end{equation}
    where \(d\mathcal{H}^{n} = dV_{\iota^*g}\) as before.
\end{thm}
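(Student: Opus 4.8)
The plan is to combine the three preceding lemmas (Form of Jacobian, Equivalence of Forms on $M$, and its degenerate version) with Sard's theorem and a partition-of-unity argument that reduces the global identity to a local Fubini statement over a submersion. First I would dispose of the critical set. Let $C=\{x\in M:\operatorname{rank} d\Phi|_x<n\}$ and let $\Phi(C)\subset N$ be the set of critical values. By the Form of Jacobian lemma $\mathcal{J}_\Phi\equiv 0$ on $C$, so the left-hand side is unchanged if $M$ is replaced by the open set of regular points $M_0:=M\setminus C$. By Sard's theorem $\mathcal{H}^n(\Phi(C))=0$, so the outer integral on the right-hand side is unchanged if $N$ is replaced by $N\setminus\Phi(C)$; and for every regular value $y$ the fibre $\Phi^{-1}[y]$ lies in $M_0$ and is a smooth embedded $(m-n)$-submanifold carrying the induced volume measure $d\mathcal{H}^{m-n}=dV_{\iota^*g}$. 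Hence it suffices to prove the identity for the submersion $\Phi|_{M_0}\colon M_0\to N$.

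Next I would localize. Since $\Phi|_{M_0}$ is a submersion, each point of $M_0$ has a neighbourhood $U$ carrying a local orthonormal frame $\{e_1,\dots,e_m\}$ with $e_{n+1},\dots,e_m$ tangent to the fibres and $e_1,\dots,e_n$ spanning $(\ker\Phi_*)^\perp$, exactly as in the proof of the Form of Jacobian lemma; let $\{\omega^i\}$ be the dual coframe and choose an oriented orthonormal coframe $\{\varphi^j\}$ on (a shrunk) $\Phi(U)$ so that $\varphi^1\wedge\dots\wedge\varphi^n=dV_{g_N}$ there. Pick a locally finite partition of unity $\{\theta_k\}$ subordinate to such a cover $\{U_k\}$ of $M_0$; since $f\geq 0$, Tonelli's theorem lets me sum term by term, so it is enough to treat each $f_k:=\theta_k f$, supported in one $U_k$. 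On $U=U_k$ every point is regular, so the Equivalence of Forms lemma applies pointwise and gives
\[
\mathcal{J}_\Phi\, dV_{g_M}=\mathcal{J}_\Phi\,\omega^1\wedge\dots\wedge\omega^m=\Phi^*\!\big(\varphi^1\wedge\dots\wedge\varphi^n\big)\wedge\omega^{n+1}\wedge\dots\wedge\omega^m=\Phi^*(dV_{g_N})\wedge\omega^{n+1}\wedge\dots\wedge\omega^m.
\]
Restricted to a fibre $\Phi^{-1}[y]$ the frame $\{e_{n+1},\dots,e_m\}$ is orthonormal for the induced metric, so $(\omega^{n+1}\wedge\dots\wedge\omega^m)|_{\Phi^{-1}[y]}$ is precisely the Riemannian volume form $d\mathcal{H}^{m-n}$ of the fibre. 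Passing to submersion-adapted coordinates $(x^1,\dots,x^n,z^1,\dots,z^{m-n})$ on $U$ in which $\Phi$ is the projection $(x,z)\mapsto x$, ordinary Fubini applied to $\int_U f_k\,\Phi^*(dV_{g_N})\wedge\omega^{n+1}\wedge\dots\wedge\omega^m$ produces the inner integral $\int_{\Phi^{-1}[y]}f_k\,d\mathcal{H}^{m-n}$ over the $z$-slice and integration against $dV_{g_N}(y)$ over $N$ from the $x$-variables; this is the claimed formula for $f_k$, and the coordinate description simultaneously shows measurability of $y\mapsto\int_{\Phi^{-1}[y]}f_k\,d\mathcal{H}^{m-n}$.

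Finally I would globalize: summing over $k$, using $\sum_k\theta_k\equiv 1$ on $M_0$ and Tonelli's theorem to interchange $\sum_k$ with both integrals, yields $\int_{M_0}f\mathcal{J}_\Phi\,d\mathcal{H}^m=\int_N d\mathcal{H}^n(y)\int_{\Phi^{-1}[y]}f\,d\mathcal{H}^{m-n}$, and by the first step the left side equals $\int_M f\mathcal{J}_\Phi\,d\mathcal{H}^m$. In the non-orientable case one replaces every volume form by the corresponding Riemannian density of the preceding proposition and works with $|\omega^1\wedge\dots\wedge\omega^m|$, the local computation being identical. I expect the main obstacle to be this local step: making the ``integration over the fibre'' rigorous, i.e.\ checking that $(\omega^{n+1}\wedge\dots\wedge\omega^m)|_{\Phi^{-1}[y]}$ is exactly the induced $(m-n)$-dimensional volume form and that Fubini in adapted coordinates matches $dV_{g_N}(y)$ on the base with $d\mathcal{H}^{m-n}$ on the fibre, which amounts to the Gram-determinant identity $\mathcal{J}_\Phi\sqrt{\det g_M}=\sqrt{\det g_N}\,\sqrt{\det h_y}$ in those coordinates; the measurability of the fibre-integral is a secondary technical point handled alongside it.
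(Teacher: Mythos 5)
Your proof takes essentially the same route as the paper: both reduce to the Equivalence of Forms lemma \(\mathcal{J}_\Phi\,\omega^1\wedge\dots\wedge\omega^m = \Phi^*(\varphi^1\wedge\dots\wedge\varphi^n)\wedge\omega^{n+1}\wedge\dots\wedge\omega^m\) and then invoke Sard's theorem together with a Fubini-type fibre integration. The paper compresses this into a three-line display and leaves the localization, partition-of-unity, and fibre-volume-form checks implicit (deferring to Chavel), whereas you spell them out; you also correctly apply Sard's theorem to the set of \emph{critical} values \(\Phi(C)\), where the paper's final sentence slips and asserts that \(R(\Phi)\), the set of \emph{regular} values, has measure zero.
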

\begin{proof}
    Apply the previous lemma, with \(dV_m = \omega^1 \wedge \dots \wedge \omega^m\), and denoting the set of regular values of \(\Phi\) as \(R(\Phi)\).
    \begin{equation}
        \begin{split}
            \int_M f \mathcal{J}_{\Phi} d\mathcal{H}^{m} & = \int_M f \Phi^* (\varphi^1\wedge \dots \wedge \varphi^n) \wedge \omega^{n+1}\wedge \dots \wedge \omega^{m},\\
            & = \int_{R(\Phi)}d\mathcal{H}^{n}(y) \int_{\Phi^{-1}[y]} \left(f|_{\Phi^{-1}[y]} \right) d\mathcal{H}^{m-n},
        \end{split}
    \end{equation}
    where \(y\in R(\Phi)\), so the level sets \(\Phi^{-1}[y]\) are well-defined (Riemannian) submanifolds of \(M\). Since \(\Phi\in C^r\), \(R(\Phi)\) has at most measure zero in \(N\), by Sard's theorem.
\end{proof}
Now let \(\Phi: (M,g)\rightarrow \mathbb{R} \in C^m\). Given \(f\) as before, we can obtain the previous co-area formula, like in the \(\mathbb{R}^n\) case.
\begin{thm}[Co-area Formula]
\end{thm}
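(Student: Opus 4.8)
The plan is to obtain this as the special case $N=\mathbb{R}$, $n=1$ of the Generalised Co-area Formula already proven, so that $m-n=m-1$. The only substantive point is to identify the Jacobian $\mathcal{J}_\Phi$ with $|\nabla\Phi|$ when the target is one-dimensional; once that identification is established, the formula follows by direct substitution, with the regularity hypothesis and the measure-zero exceptional set inherited verbatim from the general statement and Sard's Theorem.

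First I would unwind the definition $\mathcal{J}_\Phi(x)=\sqrt{\det\big(\Phi_*\circ(\Phi_*)^{\text{adj}}\big)}$ at a point $x\in M$. Since $N=\mathbb{R}$, the push-forward $\Phi_{*|x}:T_xM\to T_{\Phi(x)}\mathbb{R}\cong\mathbb{R}$ is a linear functional, and under the metric $g$ it is represented by the gradient: $\Phi_{*|x}(v)=d\Phi_x(v)=\langle\nabla\Phi(x),v\rangle$, using the flat/sharp isomorphisms from the metric review. The adjoint $(\Phi_*)^{\text{adj}}:\mathbb{R}\to T_xM$ is then characterised by $\langle(\Phi_*)^{\text{adj}}(s),v\rangle=s\cdot\Phi_{*|x}(v)$ for all $v\in T_xM$, which forces $(\Phi_*)^{\text{adj}}(s)=s\,\nabla\Phi(x)$. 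Composing, $\Phi_*\circ(\Phi_*)^{\text{adj}}:\mathbb{R}\to\mathbb{R}$ is multiplication by $\langle\nabla\Phi(x),\nabla\Phi(x)\rangle=|\nabla\Phi(x)|^2$, so the $1\times1$ determinant is $|\nabla\Phi(x)|^2$ and $\mathcal{J}_\Phi(x)=|\nabla\Phi(x)|$. This is consistent with \autoref{form_of_Jac}: where $\nabla\Phi\neq0$ the rank is $1=n$ and $|\det(\Phi_*|(\text{ker}\,\Phi_*)^{\perp})|$ is precisely the length of $\nabla\Phi$, while where $\nabla\Phi=0$ both sides vanish.

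Next I would check the regularity bookkeeping: the Generalised Co-area Formula requires $\Phi\in C^r$ with $r>m-n$; here $n=1$, so we need $r>m-1$, and the hypothesis $\Phi\in C^m$ suffices. Then I would apply the Generalised Co-area Formula with $N=\mathbb{R}$ and $f$ the given non-negative $L^1$ function, noting that $d\mathcal{H}^1$ on $\mathbb{R}$ is ordinary Lebesgue measure $dt$, that $\Phi^{-1}[t]=\{x\in M:\Phi(x)=t\}$, and that for almost every $t$ (every regular value, whose complement has measure zero by Sard's Theorem) this level set is a smooth embedded $(m-1)$-dimensional submanifold of $M$ carrying its induced Hausdorff measure $d\mathcal{H}^{m-1}=dV_{\iota^*g}$. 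Substituting $\mathcal{J}_\Phi=|\nabla\Phi|$ yields
\begin{equation}
    \int_M f\,|\nabla\Phi|\,d\mathcal{H}^m = \int_{\mathbb{R}} dt \int_{\Phi^{-1}[t]} \left(f|_{\Phi^{-1}[t]}\right) d\mathcal{H}^{m-1},
\end{equation}
which is the Co-area Formula.

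I expect the main obstacle to be the clean identification of the adjoint map $(\Phi_*)^{\text{adj}}$ with the metric dual of $d\Phi$: one must be careful about which inner products (on $T_xM$ versus the standard one on $\mathbb{R}$) enter the definition of the adjoint, and invoke the sharp isomorphism so that $\nabla\Phi=(d\Phi)^{\#}$ appears on the nose. Everything else — the regularity condition $r>m-1$ and the restriction to regular values — is a direct transcription from the Generalised Co-area Formula and Sard's Theorem, so no new work is needed there.
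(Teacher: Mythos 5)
Your proposal is correct and follows essentially the same route as the paper: specialize the Generalised Co-area Formula to $N=\mathbb{R}$, $n=1$, identify $\mathcal{J}_\Phi=|\mathrm{grad}\,\Phi|$ from the $1\times1$ matrix $\Phi_*\circ(\Phi_*)^{\mathrm{adj}}$, and read off $d\mathcal{H}^1=dy$. Your unwinding of the adjoint via the sharp isomorphism is somewhat more explicit than the paper's terse computation, but it is the same argument.
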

\begin{equation}
   \int_M f |\text{grad}\Phi| dV_g = \int_{\mathbb{R}} dy \int_{\Phi^{-1}[y]} \left(f|_{\Phi^{-1}[y]} \right) d\mathcal{H}^{m-1}.        
\end{equation}

Note that \(\Phi^{-1}[y] = \{x\in M: \Phi(x)=y\}=\{\Phi=y\}\), which is the boundary of the set \(\{x\in M: \Phi(x)>y\} = \{\Phi>y\}\).

\begin{proof}
    Apply the previous lemma with \(n=1\), so \(\Phi\) can be associated with the map, \(\Phi_{\mathbb{R}}: \mathbb{R}^m \rightarrow \mathbb{R}\), with \(T_pM \cong \mathbb{R}^m, T_{\Phi(p)}N \cong T_{\Phi(p)}\mathbb{R}\), as before. As \(n=1\), \(\text{det}\hspace{1mm}\Phi_* \circ (\Phi _*)^{adj}\) is just the single value of  the \(1\times 1\) matrix \(\Phi_* \circ (\Phi _*)^{adj} = (\text{grad}\Phi)^2\), so \(\mathcal{J}_{\Phi} = |\text{grad}\Phi|\). Using the volume form on a Riemannian manifold, \(dV_g = \omega^1 \wedge \dots \wedge \omega^n\), and denoting \(d\mathcal{H}^{1}(y)=dy\) gives the result.
\end{proof}

\subsection{Perimeter}

The co-area formula now allows us to generalise the idea of perimeter to \(M^n\).

\begin{defn}[Perimeter]
    The perimeter of and open subset \(A\subset M\) is,
\end{defn}
\begin{equation}
    \text{Per}(A) := \int_{\partial A} d\mathcal{H}^{m-1}
\end{equation}

Choosing \(f\equiv 1\) in the co-area formula gives,
\begin{equation}
    ||\text{grad}\Phi||_1 =  \int_{\mathbb{R}} dy \int_{\{\Phi=y\}}  d\mathcal{H}^{m-1} = \int_{\mathbb{R}} \text{Per}(\{\Phi>y\})dy.
\end{equation}
Now we have the tools to recast the isoperimetric and P\'{o}lya-Szeg\H{o} inequalities in the Riemannian manifold setting. 

\begin{defn}[Isoperimetric Inequality]
    For any open subset \(A\subset M^n:= (0,\infty)\times \Sigma^{n-1}\), it satisfies the isoperimetric inequality if,
    \begin{equation}
        \text{Per}(A) := \int_{\partial A} d\mathcal{H}^{n-1} \geq \int_{\partial A^*} d\mathcal{H}^{n-1} =: \text{Per}(A^*),
    \end{equation}
    where \(A^* = (0,r_*)\times \Sigma\), as before. 
\end{defn}

\begin{defn}[P\'{o}lya-Szeg\H{o} Inequality]
    Given a measurable smooth function, \(f:M\rightarrow \mathbb{R}_+\), it satisfies the P\'{o}lya-Szeg\H{o} inequality if,
    \begin{equation}
        ||\text{grad} f||_p \geq ||\text{grad} f^*||_p,
    \end{equation}
    where \(\text{grad}f := g^{ij}\partial_i f \partial _j\), as before.
\end{defn}

We can apply the same approximation in proof (1) in section \ref{approx_proofs} to show that the P\'{o}lya-Szeg\H{o} inequality implies the isoperimetric inequality.

\begin{appendix}

\section{Existence and Uniqueness of the Symmetric Rearrangement of \(A^*\subset M\)}\label{sec:app_A}
\setcounter{lemma}{0}
\renewcommand{\thelemma}{\Alph{section}\arabic{lemma}}

\begin{lemma}
    \(A^*\) exists and is unique.
\end{lemma}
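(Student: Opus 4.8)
The plan is to reduce the statement to the intermediate value theorem applied to the cumulative volume function. Define $\Psi\colon(0,\infty)\to[0,\infty]$ by
\[
    \Psi(r) = \int_{(0,r)\times\Sigma} \omega ,
\]
so that a set of the form $(0,r_*)\times\Sigma$ has the same $\omega$-volume as $A$ precisely when $\Psi(r_*)=\int_A\omega$. Thus it suffices to show that $\Psi$ is a continuous, strictly increasing bijection from $(0,\infty)$ onto $(0,V)$, where $V:=\int_M\omega=\sup_{r>0}\Psi(r)\in(0,\infty]$, together with the normalisation $\Psi(r)\to 0$ as $r\to 0^+$.

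First I would record monotonicity. If $0<r<r'$ then $(0,r)\times\Sigma\subset(0,r')\times\Sigma$, and since $\omega$ evaluates positively on positively oriented bases (equivalently $\omega=\varphi\,dx^1\wedge\cdots\wedge dx^n$ with $\varphi>0$), we get
\[
    \Psi(r')-\Psi(r) = \int_{[r,r')\times\Sigma}\omega > 0 ,
\]
the strict inequality holding because the integrand is strictly positive on a set of positive Lebesgue measure. Continuity then follows from continuity of the measure $E\mapsto\int_E\omega$: for $r_k\uparrow r$ the sets $(0,r_k)\times\Sigma$ increase to $(0,r)\times\Sigma$, so $\Psi(r_k)\uparrow\Psi(r)$ by monotone convergence; for $r_k\downarrow r$ the sets $(r,r_k)\times\Sigma$ decrease to $\emptyset$, so $\Psi(r_k)-\Psi(r)=\int_{(r,r_k)\times\Sigma}\omega\downarrow 0$ — here one uses that $\Psi$ is finite on $(0,\infty)$, which is part of the standing hypothesis on $\omega$. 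The boundary behaviour is then immediate: $\Psi(r)\to 0$ as $r\to 0^+$ is the assumed normalisation, and $\Psi(r)\to V$ as $r\to\infty$ by monotone convergence again.

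With these properties the lemma follows. Write $a:=\int_A\omega$, which is finite by hypothesis, so $0\le a\le V$. If $a=0$ take $A^*=\emptyset$ (the degenerate case $r_*=0$), consistent with $\emptyset^*=\emptyset$; if $a=V<\infty$ take $A^*=M$, consistent with $M^*=M$; and if $0<a<V$ the intermediate value theorem produces $r_*\in(0,\infty)$ with $\Psi(r_*)=a$, whence $A^*=(0,r_*)\times\Sigma$ has $\int_{A^*}\omega=\int_A\omega<\infty$. In every case the strict monotonicity of $\Psi$ shows $r_*$ is the unique parameter with $\Psi(r_*)=a$, so $A^*$ is the unique set of the prescribed form with the required property.

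I expect the only genuine subtlety to be the right-continuity of $\Psi$, which needs $\Psi(r)<\infty$ for finite $r$ so that continuity of the measure ``from above'' applies — this is exactly why that finiteness is assumed alongside the $r\to 0^+$ condition — and the bookkeeping that $\{r\}\times\Sigma$ has measure zero so that $(0,r)\times\Sigma$ and $(0,r]\times\Sigma$ carry the same $\omega$-volume. The endpoint cases $a\in\{0,V\}$ are handled by the stated conventions rather than through the intermediate value theorem.
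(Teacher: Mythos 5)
Your proof is correct and follows essentially the same route as the paper: apply the intermediate value theorem to the radial cumulative volume function $\Psi(r)=\int_{(0,r)\times\Sigma}\omega$, handling $A=\emptyset$ and $A=M$ by convention, and deduce uniqueness from the strict positivity of $\omega$. You are somewhat more explicit than the paper in that you actually verify continuity and strict monotonicity of $\Psi$ (via monotone convergence and finiteness of the measure) rather than merely asserting them, but the underlying idea is identical.
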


\begin{proof}
    \textbf{Existence}: Fix \(A \subset M\). Consider the continuous function \(f:(0, \infty) \to \mathbb R\) given by
    \begin{equation}
        f(r) = \int_{(0,r) \times \Sigma} \omega.
    \end{equation}
    Then, if \(A\subset M\) is finite and has nonzero measure, \(\exists \hspace{1mm} r_1 ,r_2 \in (0,\infty)\), such that \(r_1 < r_2\) and,
    \begin{equation}
        f(r_1) = \int_{(0,r_1)\times \Sigma} \omega < \int _A \omega < \int_{(0,r_2)\times \Sigma} \omega = f(r_2).
    \end{equation}
    If \(A\) is all of \(M\), take \(r_* = \infty\), and if \(A=\emptyset\), take \(r_*=0\). Now by the intermediate value theorem, \(\exists \hspace{1mm} r_*\in (0,\infty )\), such that,
    \begin{equation}
        \int _A \omega = f(r_*) = \int_{(0,r_*)\times \Sigma} \omega = \int _{A^*} \omega.
    \end{equation}

    \textbf{Uniqueness}: Suppose for \(A\subset M\) there exist two symmetric rearrangements, \(A^* = (0,r_*)\times \Sigma\) and \(\tilde{A}^* = (0,\tilde{r}_*)\times \Sigma\) such that \(r_* \neq \tilde{r}_*\), 
    \begin{equation}
        \int_ {A^*} \omega = \int_ A \omega \text{ and } \int_ {\tilde{A}^*} \omega = \int_ A \omega.
    \end{equation}
    Suppose, \(r_* > \tilde{r}_*\) WLOG. Hence, \(\int_ {A^*} \omega = \int_ {\tilde{A}^*} \omega\), and by definition 
    \begin{equation}
        \int_ {(0,r_*)\times \Sigma} \omega = \int_ {(0,\tilde{r}_*)\times \Sigma} \omega, 
    \end{equation}
    
    \begin{equation}
        0 = \int_ {(0,r_*)\times \Sigma} \omega - \int_ {(0,\tilde{r}_*)\times \Sigma} \omega\ = \int_ {(\tilde{r}_*,r_*)\times \Sigma} \omega,
    \end{equation}
    but \(\omega > 0 \), so \((\tilde{r}_*,r_*)\) has measure \(0\) and we must have \(r_* = \tilde{r}_*\) and we have uniqueness.
\end{proof}
A potentially cleaner way to consider these integrals is to write \(\omega\) in polar coordinates as, \(\omega = dr\wedge \alpha_r\), detailed in the following lemma.

\begin{lemma}
    Given an \(n\)-form \(\omega\) on \(M^n\) as before, we can uniquely write \(\omega\) as the wedge of a non-vanishing \((n-1)\)-form \(\alpha _r\), on \(\Sigma ^{n-1}\), and the standard polar coordinate, \(dr\). That is, \(\omega = dr \wedge \alpha _r\) is independent of coordinate choice.
\end{lemma}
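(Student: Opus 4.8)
The plan is to exploit the global product structure $M = (0,\infty)\times\Sigma$, which supplies two canonical objects not available on a general manifold: the coordinate function $r:M\to(0,\infty)$ (projection onto the first factor), hence the globally defined $1$-form $dr\in\Omega^1(M)$; and the globally defined vector field $\partial_r$, obtained by pulling back $\partial/\partial s$ on $(0,\infty)$ along the canonical splitting $T_pM = \mathbb{R}\,\partial_r\oplus T_p\Sigma$. One notes $dr(\partial_r)\equiv 1$ and $dr$ annihilates every vector tangent to a slice $\Sigma_r:=\{r\}\times\Sigma$. Everything else will be constructed from these, so coordinate-independence will be built in from the start.

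First I would \emph{define} $\alpha:=\iota_{\partial_r}\omega=\partial_r\lrcorner\,\omega\in\Omega^{n-1}(M)$ and verify $\omega=dr\wedge\alpha$. Since $\iota_{\partial_r}\alpha=\iota_{\partial_r}\iota_{\partial_r}\omega=0$, the Leibniz rule for interior multiplication gives $\iota_{\partial_r}(dr\wedge\alpha)=(\iota_{\partial_r}dr)\,\alpha-dr\wedge\iota_{\partial_r}\alpha=\alpha$, so $\iota_{\partial_r}(\omega-dr\wedge\alpha)=\alpha-\alpha=0$. Because $\partial_r$ is nowhere zero it extends to a basis $\{\partial_r,e_2,\dots,e_n\}$ of each $T_pM$, and a top-degree form is determined by its value on one basis; hence any $n$-form killed by $\iota_{\partial_r}$ vanishes, giving $\omega=dr\wedge\alpha$. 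For each fixed $r$ I then set $\alpha_r:=\iota_r^*\alpha\in\Omega^{n-1}(\Sigma)$, where $\iota_r:\Sigma\hookrightarrow M$, $\sigma\mapsto(r,\sigma)$; this is the precise meaning of "$\omega=dr\wedge\alpha_r$", the wedge being understood via the $(n-1)$-form $\alpha$ on $M$ that restricts to $\alpha_r$ on each slice and annihilates $\partial_r$. Non-vanishing of $\alpha_r$ is where the hypothesis $\omega>0$ on positively oriented bases enters: for a basis $e_2,\dots,e_n$ of $T_\sigma\Sigma$, expanding $dr\wedge\alpha$ on $(\partial_r,e_2,\dots,e_n)$ and using $dr(\partial_r)=1$, $dr(e_i)=0$ shows $\omega(\partial_r,e_2,\dots,e_n)=\alpha_r(e_2,\dots,e_n)$, and the left-hand side is nonzero since $(\partial_r,e_2,\dots,e_n)$ is a basis of $T_{(r,\sigma)}M$.

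For uniqueness I would first flag the genuine subtlety: $dr\wedge\alpha=dr\wedge\alpha'$ forces only $\alpha-\alpha'\in dr\wedge\Omega^{n-2}(M)$, so the decomposition is unique only after imposing the normalization $\iota_{\partial_r}\alpha=0$ — equivalently, that $\alpha$ carry no "$dr$-component", which is exactly what lets it descend slice-by-slice to a form on $\Sigma$. With that normalization, if $\omega=dr\wedge\beta$ and $\iota_{\partial_r}\beta=0$, the same computation gives $\iota_{\partial_r}\omega=\beta$, so $\beta=\alpha$ and $\beta_r=\alpha_r$ for every $r$. Coordinate-independence is then automatic, since $\partial_r$, $dr$ and the inclusions $\iota_r$ were defined using only the product structure and never a chart on $\Sigma$; in product-adapted coordinates $(r,x^1,\dots,x^{n-1})$ one has $\omega=\varphi\,dr\wedge dx^1\wedge\cdots\wedge dx^{n-1}$ and $\alpha_r=\varphi\,dx^1\wedge\cdots\wedge dx^{n-1}$, and a change of chart on $\Sigma$ multiplies $\varphi$ by the Jacobian and $dx^1\wedge\cdots\wedge dx^{n-1}$ by its reciprocal, leaving $\alpha_r$ fixed — but the invariant construction makes this transparent. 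The only point needing care is recording the correct normalization for uniqueness and the role of positivity of $\omega$ in the non-vanishing claim; the remainder is routine exterior algebra.
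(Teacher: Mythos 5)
Your proof is correct and takes a genuinely different, more invariant route than the paper. The paper works entirely in coordinates: it writes $\omega = \varphi\,dr\wedge dx^2\wedge\cdots\wedge dx^n$ in a chart with $x^1 = r$, then passes to a second chart with $y^1 = r$, observes the Jacobian determinant is positive (orientability), and reads off that the coefficient function transforms so that $\omega = dr\wedge\alpha_r$ holds again. That computation verifies consistency of the \emph{coordinate expression} for $\omega$ across charts, but never confronts the ambiguity that $dr\wedge\alpha = dr\wedge\alpha'$ forces only $\alpha - \alpha' \in dr\wedge\Omega^{n-2}$; the paper implicitly chooses $\alpha_r$ to have no $dr$-component by writing it as a combination of the $dx^2,\ldots,dx^n$ (respectively $dy^2,\ldots,dy^n$), but never states this as the normalization making the decomposition unique. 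You flag and resolve exactly that gap: defining $\alpha := \iota_{\partial_r}\omega$ bakes in the normalization $\iota_{\partial_r}\alpha = 0$ from the start, and the Leibniz computation $\iota_{\partial_r}(dr\wedge\alpha) = \alpha$ together with "a top form killed by $\iota_{\partial_r}$ vanishes" gives both existence of the decomposition and its uniqueness under that normalization. Your approach also makes coordinate-independence automatic rather than something checked chart-by-chart, since $\partial_r$, $dr$, and the slice inclusions $\iota_r$ depend only on the product structure. The non-vanishing argument via $\omega(\partial_r, e_2,\ldots,e_n) = \alpha_r(e_2,\ldots,e_n) \neq 0$ is also cleaner than the paper's appeal to positivity of $\psi$. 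In short: the paper's proof is a coordinate computation that leaves the uniqueness normalization implicit; yours is the invariant argument that makes it explicit, and is the better proof.
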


\begin{proof}
    Suppose \(M\) has local coordinates, \(x_1, \dots ,x_n\), we can write,
    \begin{equation}
        \begin{split}
            \omega \left( x^1, \dots ,x^n \right) & = \varphi (x^1,x^2, \dots , x^n)dr\wedge dx^2 \wedge \dots \wedge dx^n,\\
            & = dr \wedge \left (\varphi (r,x^1, \dots , x^n) dx^2 \wedge \dots \wedge dx^n\right ) = dr\wedge \alpha _r,
        \end{split}
    \end{equation}
        taking \(x^1 = r\) and \(\varphi :M^n \rightarrow \mathbb{R}, \varphi >0\) is a smooth function, where we have assumed \(\omega >0\) for the positively oriented basis \(x^1, \dots ,x^n\). Transform to another positively oriented coordinate frame \(y^1 = r, y^2, \dots ,y^n\), and write \(Y=y^1,\dots y^n\),
    \begin{equation}
        \begin{split}
            \omega \left( r,x^1(Y),\dots ,x^n(Y) \right) & = \varphi (r,x^1(Y),\dots ,x^n(Y)) \hspace{1mm} \text{det} \left( \frac{\partial x^i}{\partial y^j} \right)_{1\leq i,j, \leq n} dr \wedge \dots \wedge dy^n\\
            & = \psi (r,y^1,\dots ,y^n) dr\wedge \dots \wedge dy^n\\
            & = dr \wedge \alpha _r,
        \end{split}
    \end{equation}
    where \(\psi :M^n \rightarrow \mathbb{R}, \psi >0\) is a smooth function, since det\(\left( \frac{\partial x^i}{\partial y^j} \right) >0\), as \(M^n\) is orientable.
\end{proof}

In the uniqueness proof of \(A^*\), we could equivalently write \(\int _{A^*}\omega = \int_{(0,r_*)} dr \int_{\Sigma}\alpha _r\) and using Fubini's theorem to conclude similarly. The following line would instead read,
\begin{equation}
    0 = \int_{(0,r_*)} dr \int_{\Sigma}\alpha _r - \int_{(0,\tilde{r_*})} dr \int_{\Sigma}\alpha _r = \int_{(\tilde{r_*},r_*)} dr \int_{\Sigma}\alpha _r,
\end{equation}
where \(\int_{\Sigma}\alpha _r > 0\), so vanishes \(\int_{(\tilde{r_*},r_*)} dr\) and \((\tilde{r_*},r_*)\) is a set of measure zero.

\section{Determinant Calculation for Orientability Condition}\label{sec:app_B}

\end{appendix}

\begin{equation}
    \begin{split}
        \tilde{G}_{ij} & = \text{det}
    \begin{pmatrix}
      \begin{matrix}
      \partial_1 \cdot \partial_1 & \dots &\widehat{\partial_1 \cdot \partial_j} & \dots & \partial_1 \cdot \partial_t \\
      \vdots & & \vdots & & \vdots \\
      \widehat{\partial_{i} \cdot \partial_1} & \dots & \widehat{\partial_{i} \cdot \partial_j} & \dots & \widehat{\partial_{i} \cdot \partial_t}\\
      \vdots & & \vdots & & \vdots \\
      \partial_{t} \cdot \partial_1 & \dots & \widehat{\partial_{t} \cdot \partial_j} & \dots & \partial_{t} \cdot \partial_t
      \end{matrix}
    \end{pmatrix}\\ & = \text{det}
    \begin{pmatrix}
        \begin{matrix}
            \partial_1 \cdot \partial_1 & \dots & \partial_{1} \cdot \partial_{j-1} & \partial_{1} \cdot \partial_{j+1} & \dots & \partial_{1} \cdot \partial_{t}\\
            \vdots & \ddots & & & & \vdots\\
            \partial_{i-1} \cdot \partial_{1} & & \ddots & & & \partial_{i-1} \cdot \partial_{t}\\
            \partial_{i+1} \cdot \partial_{1} & & & \ddots & & \partial_{i+1} \cdot \partial_{t}\\
            \vdots & & & & \ddots & \vdots\\
            \partial_t \cdot \partial_1 & \dots & \partial_{t} \cdot \partial_{j-1} & \partial_{t} \cdot \partial_{j+1} & \dots & \partial_{t} \cdot \partial_{t}\\
        \end{matrix}
    \end{pmatrix}\\
    & = \sum_{\sigma}\text{sgn}(\sigma) \partial_1 \cdot \partial_{\sigma(1)} \dots \widehat{\partial_i \cdot \partial_{\sigma(i)}} \dots \widehat{\partial_j \cdot \partial_{\sigma(j)}} \dots \partial_t \cdot \partial_{\sigma(t)},
    \end{split}
\end{equation}

which is non-zero only when \(i,j = t\), since all \(\partial_i \cdot \partial_t = 0\).

\end{document}